%%%%%%%%%%%%%%%%%%%%%%%%%%%%%%%%%%%%%%%%%%%%%%%%%%%%%%%%%%%%%%%%%%%%%%%%%%
%%%%%%%%%%%%%%%%%%%%%%%%%%%%%%%%%%%%%%%%%%%%%%%%%%%%%%%%%%%%%%%%%%%%%%%%%%%%
\documentclass[11pt,twoside]{amsart}
\usepackage{mathrsfs}
\usepackage{graphicx}
\usepackage{mathrsfs}
\usepackage{amsmath}
\usepackage{amsthm}
\usepackage{amsfonts}
\usepackage{amssymb}
\usepackage{latexsym}
\usepackage[all]{xy}
\usepackage[colorlinks=true]{hyperref}
\hypersetup{urlcolor=blue, citecolor=red}

\date{}
\pagestyle{plain}
%%%%%%%%%%%%%%%%%%%%%%%%%%%%%%%%%%%%%%%%%%%%%%%%%%%%%%%%%%%%%%%%%%%%
\textheight= 22.5 true cm \textwidth =16 true cm
\allowdisplaybreaks[4] \footskip=15pt
\renewcommand{\uppercasenonmath}[1]{}

\topmargin=10pt \evensidemargin0pt \oddsidemargin0pt
\headheight7pt
\headsep1pt
%\marginparwidth0pt
%\marginparsep0pt
%\footskip0pt
%\footnotesep0pt
%%%%%%%%%%%%%%%%%%%%%%%%%%%%%%%%%%%%%%%%%%%%%%%%%%%%%%%%%%%%%%%%%%%
\numberwithin{equation}{section} \theoremstyle{plain}
\newtheorem{theorem}{Theorem}[section]
\newtheorem{corollary}[theorem]{Corollary}
\newtheorem{lemma}[theorem]{Lemma}
\newtheorem{proposition}[theorem]{Proposition}
\theoremstyle{definition}
\newtheorem{definition}[theorem]{Definition}

\newtheorem{remark}[theorem]{Remark}
%%%%%%%%%%%%%%%%%%%%%%%%%%%%%%%%%%%%%%
%%%%%%%%%%%%%%%%%%%%%%%%%%%%%%%%%%%%%%
\newtheorem*{ack*}{ACKNOWLEDGEMENTS}

%%%%%%%%%%%%%%%%%%%%%%%%%%%%%%%%%%%%%%%%%%%%%%%%%%%%%%%%%%%%%%%%%%%%

%%%%%%%%%%%%%%%%%%%%%%%%%%%%%%%%%%%%%%%%%%%%%%%%%%%%%%%%%%%%%%%%%%%
%%%%%%%%%%%%%%%%%%%%%%%%%%%%%%%%%%%%%%%%%%%%%%%%%%%%%%%%%%%%%%%%%%%%%%%%%

%%%%%%%%%%%%%%%%%%%%%%%%%%%%%%%%%%%%%%%%%%%%%%%%%%%%%%%%%%%%%%%%%%%
%%%%%%%%%%%%%%%%%%%%%%%%%%%%%%%%%%%%%%%%%%%%%%%
%%%%%%%%%%%%%%%%%%%%%%%%%%%%%%%%%%%%%%%%%%%%

\newcommand{\pf}{\noindent\begin {proof}}
\newcommand{\epf}{\end{proof}}

%%%%%%%%%%%%%%%%%%%%%%%%%%%%%%%%%%%%%%%%%%%%%%%

\begin{document}
\begin{center}
{
{\bf\large Recollements associated to cotorsion pairs over upper triangular matrix rings}\\

\vspace{0.5cm}    Rongmin Zhu$^{1}$, Yeyang Peng$^{1}$, Nanqing Ding$^{1}$ \\
%\bigskip

$^{1}$Department of Mathematics, Nanjing University, Nanjing 210093, China
}
\end{center}
\title{}\maketitle\footnote[0]{Corresponding author: Rongmin Zhu.

E-mail addresses: rongminzhu@hotmail.com, pengyy@smail.nju.edu.cn, nqding@nju.edu.cn. }
\footnote[0]{2010 Mathematics Subject Classification:
16E30; 18E30; 18G25.}
\vspace{-3em}
%\bigskip
 $$\bf  Abstract$$
%\bigskip
\leftskip10truemm \rightskip10truemm \noindent Let $A$, $B$ be two rings and $T=\left(\begin{smallmatrix}  A & M \\  0 & B \\\end{smallmatrix}\right)$ with $M$ an $A$-$B$-bimodule. Given two complete hereditary cotorsion pairs $(\mathcal{A}_{A},\mathcal{B}_{A})$ and $(\mathcal{C}_{B},\mathcal{D}_{B})$ in $A$-Mod and $B$-Mod respectively. We define two cotorsion pairs $(\Phi(\mathcal{A}_{A},\mathcal{C}_{B}), \mathrm{Rep}(\mathcal{B}_{A},\mathcal{D}_{B}))$ and $(\mathrm{Rep}(\mathcal{A}_{A},\mathcal{C}_{B}),$\\$\Psi(\mathcal{B}_{A},\mathcal{D}_{B}))$ in $T$-Mod and show that both of these cotorsion pairs are complete and hereditary. Given two cofibrantly generated model structures $\mathcal{M}_{A}$ and $\mathcal{M}_{B}$ on $A$-Mod and $B$-Mod respectively. Using the result above, we investigate when there exist a cofibrantly generated model structure $\mathcal{M}_{T}$ on $T$-Mod and a recollement of $\mathrm{Ho}(\mathcal{M}_{T})$ relative to $\mathrm{Ho}(\mathcal{M}_{A})$ and $\mathrm{Ho}(\mathcal{M}_{B})$. Finally, some applications are given in Gorenstein homological algebra.  \\
\vbox to 0.3cm{}\\
{\it Key Words:} Upper triangular matrix ring; Cotorsion pair; Model structure; Recollement.\\

\leftskip0truemm \rightskip0truemm
\bigskip

%\bigskip
\section { \bf Introduction }
\leftskip0truemm \rightskip0truemm
 Let $A$ and $B$ be two rings. For any bimodule $_{A}M_{B}$, we write $T$ for
the upper triangular matrix ring $\left(\begin{smallmatrix}  A & M \\  0 & B \\\end{smallmatrix}\right)$. Such rings play an important role in the
study of the representation theory of artin rings and algebras. It is known that each $T$-module is identified with a
triple $\left(\begin{smallmatrix}  X \\  Y  \\\end{smallmatrix}\right)_{\phi}$, where $X \in A$-$\mathrm{Mod}$, $Y \in B$-$\mathrm{Mod}$ and $ \phi: M\otimes_{B} Y\rightarrow X$ is a homomorphism of $A$-modules (see  \cite[Theorem 1.5]{gr82}). Denote by $\psi$ the corresponding homomorphism from $Y$ to $\mathrm{Hom}_{A}(M,X)$ by adjoint isomorphism. Some important
classes of modules over upper triangular matrix rings have been studied by many authors (e.g., see \cite{ah00}, \cite{ah99}, \cite{xi12}, \cite{zh13} and \cite{ee14} and their references).

Now we recall the characterizations of the following classes of left $T$-modules.

Let $X=\binom{X_{1}}{X_{2}}_{\phi}$ be a left $T$-module.

(1) (\cite[Theorem 3.1]{ah00}) $X$ is projective if and only if $X_{2}$ is projective in $B$-Mod, $\mathrm{coker}\phi$ is projective in $A$-Mod and $\phi$ is monomorphic.

(2) (\cite[Proposition 5.1]{ah99}) $X$ is injective if and only if $X_{1}$ is injective in $A$-Mod, $\mathrm{ker}\psi$ is injective in $B$-Mod and $\psi$ is epimorphic.

(3) (\cite{fo75}, \cite[Theorem 2.5]{ee11}) $X$ is flat if and only if $X_{2}$ is flat in $B$-Mod, $\mathrm{coker}\phi$ is flat in $A$-Mod and $\phi$ is monomorphic.

(4) (\cite[Theorem 3.5]{ee14}) Suppose that $_{A}M$ has finite projective dimension, $M_{B}$  has finite flat dimension and  $A$ is left Gorenstein regular (see  \cite[Definition 2.1]{ee14} or Section 4 below). Then $X$ is Gorenstein projective if and only if $X_{2}$ and $\mathrm{coker}\phi$ are Gorenstein projective and the homomorphism $\phi$ is monomorphic.

(5) (\cite[Theorem 3.8]{ee14}) Suppose that $_{A}M$ has finite projective dimension, $M_{B}$  has finite flat dimension and  $B$ is left Gorenstein regular. Then $X$ is Gorenstein injective if and only if $X_{1}$ and $\mathrm{ker}\psi$ are Gorenstein injective and the homomorphism $\psi$ is epimorphic.

Let $R$ be a ring. Recall that a cotorsion pair in $R$-Mod is a pair $(\mathcal{A},\mathcal{B})$ of classes of $R$-modules which are
orthogonal with respect to Ext$^{1}_{R}(-,-)$.
Denote by $\mathcal{P}_{R}$, $\mathcal{I}_{R}$,  $\mathcal{GP}_{R}$, $\mathcal{GI}_{R}$ the classes of
projective, injective,  Gorenstein projective and Gorenstein injective left $R$-modules, respectively. It is known that $(\mathcal{P}_{R},R\text{-Mod})$ and $(R\text{-Mod},\mathcal{I}_{R})$ are complete hereditary cotorsion pairs. If $R$ is a ring with all projective left $R$-modules having finite injective dimension, then $(\mathcal{GP}_{R},\mathcal{GP}_{R}^{\perp})$ is a complete hereditary cotorsion pair in $R$-Mod by \cite[Theorem 4.2]{wa16}. It is shown in \cite[Theorem 4.6]{js18} that $(^{\perp}\mathcal{GI}_{R},\mathcal{GI}_{R})$ is a complete cotorsion pair for any ring. By the characterizations above, it seems that the class of $T$-modules as the left (resp. right) half of a cotorsion pair  shares the same descriptions under some conditions.

On the other hand, the notion of a torsion pair in an abelian category was introduced by Dickson \cite{di66}. It plays a prominent role in representation theory of algebras. Campbell \cite{ca78} proved that
the torsion pairs for $T$-mod correspond bijectively to pairs of torsion
pairs, one for $A$-mod and one for $B$-mod. This motivates us to investigate the relationship of cotorsion pairs among $A$-Mod, $B$-Mod and $T$-Mod.
Given a class $\mathcal{C}$ of modules, we write $\mathcal{C}_{A}$ (resp. $\mathcal{C}_{B}$) instead of $\mathcal{C}$ if $\mathcal{C}\subseteq A$-Mod (resp. $\mathcal{C}\subseteq B$-Mod).
Let $\mathcal{C}$ and $\mathcal{D}$ be two classes of modules. In Section 3, we set the following classes of $T$-modules:
\begin{align*}
&\mathrm{Rep}(\mathcal{C}_{A},\mathcal{D}_{B})=\{N=\left(\begin{smallmatrix}  N_{1}  \\   N_{2} \\\end{smallmatrix}\right)_{\phi^{N}}\in\text{T-Mod}\mid N_{1}\in\mathcal{C}_{A},~N_{2}\in\mathcal{D}_{B} \};\\
&\Phi(\mathcal{C}_{A},\mathcal{D}_{B})=\{X=\left(\begin{smallmatrix}  X_{1}  \\   X_{2} \\\end{smallmatrix}\right)_{\phi^{X}}\in\text{T-Mod}\mid \phi^{X}\text{ is monomorphic},~\mathrm{coker}\phi^{X}\in\mathcal{C}_{A},~X_{2}\in\mathcal{D}_{B} \};\\
&\Psi(\mathcal{C}_{A},\mathcal{D}_{B})=\{Y=\left(\begin{smallmatrix}  Y_{1}  \\   Y_{2} \\\end{smallmatrix}\right)_{\psi^{Y}}\in\text{T-Mod}\mid \psi^{Y}\text{ is epimorphic},~Y_{1}\in\mathcal{C}_{A},~\mathrm{ker}\psi^{Y}\in\mathcal{D}_{B} \}.
\end{align*}

We have the following main result which shows that two complete hereditary cotorsion pairs, one in $A$-Mod and one in $B$-Mod, induce two complete hereditary cotorsion pairs in $T$-Mod. \\

 {$\mathbf{Theorem}$} Let $A$ and $B$ be two rings and $T=\left(\begin{matrix}  A & M \\  0 & B \\\end{matrix}\right)$ with $M$ an $A$-$B$-bimodule, and let $\mathcal{A},~\mathcal{B}$, $\mathcal{C}$ and $\mathcal{D}$ be classes of modules. If $\mathrm{Tor}_{1}^{B}(M,E)=0$ for any $E\in \mathcal{C}_{B}$ and $\mathrm{Ext}_{A}^{1}(M,F)=0$ for any $F\in \mathcal{B}_{A}$,  then the following are equivalent:

(1) $(\mathcal{A}_{A},\mathcal{B}_{A})$ and $(\mathcal{C}_{B},\mathcal{D}_{B})$ are hereditary cotorsion pairs each generated by a set.

(2) $(\Phi(\mathcal{A}_{A},\mathcal{C}_{B}), \mathrm{Rep}(\mathcal{B}_{A},\mathcal{D}_{B}))$ and $(\mathrm{Rep}(\mathcal{A}_{A},\mathcal{C}_{B}),\Psi(\mathcal{B}_{A},\mathcal{D}_{B}))$ are hereditary cotorsion pairs each generated by a set.\\

The proof of this theorem is inspired by \cite[Theorem A]{ho18}.

Cotorsion pairs and their relation to model structures have been the topic of much recent research. The most wonderful result in \cite{ho02}, which is now known as Hovey's correspondence, says that there is a one-to-one correspondence between abelian model structures and complete cotorsion pairs.
Hovey's correspondence makes it clear that an abelian model structure on abelian category can be represented by a triple $\mathcal{M=(Q,W,R)}$. Becker \cite{be14} and Gillespie \cite{gj14} showed that given two complete hereditary cotorsion pairs $(\mathcal{Q,\widetilde{R}})$ and $\mathcal{(\widetilde{Q},R)}$, if $\widetilde{\mathcal{Q}}\subseteq \mathcal{Q}$ (or equivalently $\widetilde{\mathcal{R}}\subseteq \mathcal{R}$) and $\widetilde{\mathcal{Q}}\cap\mathcal{R}=\mathcal{Q}\cap\widetilde{\mathcal{R}}$, then there is a thick subcategory $\mathcal{W}$ such that $\mathcal{M=(Q,W,R)}$ forms a Hovey triple. Thus, there exists the triangulated equivalence
$$\mathrm{Ho}(\mathcal{M})\overset{\sim}{\rightarrow}\underline{\mathcal{Q}\cap\mathcal{R}},$$
where $\underline{\mathcal{Q}\cap\mathcal{R}}$ denotes the stable categpry of the Frobenius category $\mathcal{Q}\cap\mathcal{R}$ (see \cite[Theorem 4.3]{gj16b}).
If $R$ is a Gorenstein ring (i.e. a left and right Noetherian ring with finite injective dimension as either left or right module over itself ), the cotorsion pairs $(\mathcal{GP}_{R},\mathcal{W}_{R})$ and $(\mathcal{P}_{R},R\text{-Mod})$ satisfy the condition $\mathcal{P}_{R}\subseteq\mathcal{GP}_{R}$ and $\mathcal{GP}_{R}\cap\mathcal{W}_{R}=\mathcal{P}_{R}$,  where $\mathcal{W}_{R}$ is the category of left $R$-modules with finite projective (injective) dimension. Hence $\mathcal{M}=(\mathcal{GP}_{R},\mathcal{W}_{R},R\text{-Mod})$ forms a hereditary abelian model structure on $R$-Mod. Then there exists a triangulated equivalence
$$\mathrm{Ho}(\mathcal{M})\overset{\sim}{\rightarrow}\underline{\mathcal{GP}_{R}}.$$
If we consider Artin algebras and finitely generated modules, Zhang proved in \cite[Theorem 3.5]{zh13} that if $T$ is a Gorenstein algebra and $_{A}M$ is projective, then there is a recollement of $\underline{\mathcal{GP}_{T}}$
relative to $\underline{\mathcal{GP}_{A}}$ and $\underline{\mathcal{GP}_{B}}$. Inspired by the above equivalences and recollement, we answer the following question in Section 4.\\

\hspace{-0.4cm}{\bf Question}
 Let $T=\left(\begin{matrix}  A & M \\  0 & B \\\end{matrix}\right)$ be an upper triangular matrix ring. Given cofibrantly generated model structures $\mathcal{M}_{A}$ and $\mathcal{M}_{B}$ on $A$-Mod and $B$-Mod respectively. Are there a cofibrantly generated model structure $\mathcal{M}_{T}$ on $T$-Mod and a recollement of $\mathrm{Ho}(\mathcal{M}_{T})$ relative to $\mathrm{Ho}(\mathcal{M}_{A})$ and $\mathrm{Ho}(\mathcal{M}_{B})$ ?\\

 Finally, we give some applications of our main results for  Gorenstein projective and Gorenstein flat model structures. It is shown that
the homotopy category of these model structures on $T$-Mod admits a recollement relative to corresponding homotopy categories.
\bigskip

\section { \bf Preliminaries }
\bigskip
Now we introduce some notations and conventions used later in the paper. For more details the reader can consult \cite{ee00,tr06} and \cite{hv99}. All the rings we consider will be associative rings with identity, all the
modules considered will be unital modules. For any ring $R$, we denote the category of left $R$-modules by $R$-Mod.\\

\hspace{-0.4cm}\textbf{2.1} \emph{Cotorsion pairs}. ~A cotorsion pair is a pair $(\mathcal{A,B})$ of classes of left $R$-modules such that $\mathcal{A}^{\perp}=\mathcal{B}$ and $A={^{\perp}\mathcal{B}}$. Here $\mathcal{A}^{\perp}$ is
the class of left $R$-modules $X$ such that
Ext$_{R}^{1}(A,X)=0$ for all $A\in \mathcal{A}$, and
similarly $^{\perp}\mathcal{B}$ is the class of  left $R$-modules $Y$ such that Ext$^{1}_{R}(Y, B)=0$ for all $B \in \mathcal{B}$.
A cotorsion pair $(\mathcal{A}, \mathcal{B})$ is said to be \emph{complete} if it has enough projectives and injectives, i.e., for any left $R$-module $X$, there are exact sequences $0\rightarrow B \rightarrow A \rightarrow X \rightarrow 0$ and $0 \rightarrow X \rightarrow B' \rightarrow A'\rightarrow 0$ respectively with $B, B'\in \mathcal{B}$ and
$A, A'\in \mathcal{A}$. A cotorsion pair $(\mathcal{A}, \mathcal{B})$ is complete if and only if  $(\mathcal{A}, \mathcal{B})$ has enough injectives if and only if $(\mathcal{A}, \mathcal{B})$ has enough projectives.

Let $\mathcal{C}$ be a class of left $R$-modules. Following \cite[Definition 5.15]{tr06}, the cotorsion pair \emph{generated by $\mathcal{C}$} is $(^{\perp}(\mathcal{C}^{\perp}),\mathcal{C}^{\perp})$ and the cotorsion pair \emph{cogenerated by $\mathcal{C}$} is $(^{\perp}\mathcal{C},(^{\perp}\mathcal{C})^{\perp})$. By \cite[Theorem 6.11]{tr06}, if a cotorsion pair $(\mathcal{A}, \mathcal{B})$  is generated by a set, then it is complete. We say that a class $\mathcal{G}$ of left $R$-modules is \emph{generating} if any left $R$-module is the quotient of a set-indexed coproduct of modules in $\mathcal{G}$.  A cotorsion pair $(\mathcal{A}, \mathcal{B})$ is called \emph{small} \cite[Definition 6.4]{ho02} if it is  generated by a set and $\mathcal{A}$ is generating.

 A class of left $R$-modules is \emph{resolving} if it contains all the projective left $R$-modules and is closed under extensions, kernels of epimorphisms and direct summands. We say that a cotorsion  pair  $(\mathcal{A}, \mathcal{B})$ is  \emph{resolving} if  $\mathcal{A}$ is resolving;  $(\mathcal{A}, \mathcal{B})$ is \emph{coresolving} if the right hand class $\mathcal{B}$ satisfies the dual; $(\mathcal{A}, \mathcal{B})$ is \emph{hereditary} \cite{tr06} if it is both resolving and coresolving.

Let $C\in R$-Mod and $\mathcal{F}$ a
class of modules closed under isomorphic images and direct summands.  An $\mathcal{F}$-\emph{precover} of $C$ is a homomorphism $\phi: F\rightarrow C$ with $F\in \mathcal{F}$ such that given any other homomorphism $\phi': F'\rightarrow C$ with $F'\in \mathcal{F}$, there exists a homomorphism $\varphi :F' \rightarrow F$ such that $\phi' = \phi\varphi$. An $\mathcal{F}$-precover $\phi: F\rightarrow C$ is called \emph{special}, if $\phi$ is epimorphic and ker$\phi\in \mathcal{F}^{\perp}$. $\mathcal{F}$-preenvelopes and special $\mathcal{F}$-preenvelopes are defined dually.\\

\hspace{-0.4cm}\textbf{2.2} \emph{Recollement.} Let $\mathcal{T}',~ \mathcal{T},~ \mathcal{T}''$ be triangulated categories. We give the definition that appeared in \cite{he05} based on localization and colocalization sequences. The standard reference is \cite{be82}.

\begin{definition} Let $\mathcal{T}'\stackrel{F}\rightarrow \mathcal{T}\stackrel{G}\rightarrow \mathcal{T}''$ be a sequence of triangulated functors between triangulated categories. We say it is a \emph{localization sequence} when there exist right adjoints $F_{\rho}$ and $G_{\rho}$ giving a diagram of functors as below with the listed properties.
$$\xymatrix{
  \mathcal{T}' \ar@<0.6ex>[r]^{F} & \mathcal{T}\ar@<0.6ex>[l]^{F_{\rho}}\ar@<0.6ex>[r]^{G} & \mathcal{T}''.\ar@<0.6ex>[l]^{G_{\rho}} }$$

(1) The right adjoint $F_{\rho}$ of $F$ satisfies $F_{\rho}\circ F=id_{\mathcal{T}'}$.

(2) The right adjoint $G_{\rho}$ of $G$ satisfies $G\circ G_{\rho} =id_{\mathcal{T}''}$.

(3) For any object $X\in \mathcal{T}$, we have $GX=0$ if and only if $X\cong FX'$ for some $X'\in \mathcal{T}'$.

A colocalization sequence is the dual. That is, there must exist left adjoints $F_{\lambda}$ and $G_{\lambda}$ with the analogous properties.
\end{definition}

This brings us to the definition of a recollement where the sequence of functors $\mathcal{T}'\stackrel{F}\rightarrow \mathcal{T}\stackrel{G}\rightarrow \mathcal{T}''$ is both a localization sequence and a colocalization sequence.

 \begin{definition} Let $\mathcal{T}'\stackrel{F}\rightarrow \mathcal{T}\stackrel{G}\rightarrow \mathcal{T}''$ be a sequence of exact functors between triangulated categories. We say $\mathcal{T}'\stackrel{F}\rightarrow \mathcal{T}\stackrel{G}\rightarrow \mathcal{T}''$ induces a \emph{recollement} if it is both a localization sequence and a colocalization sequence as shown in the picture
$$\xymatrix{\mathcal{T}'\ar[r]^{F}&\ar@<-3ex>[l]_{F_{\lambda}}\ar@<3ex>[l]^{F_{\rho}}\mathcal{T}
\ar[r]^{G}&\ar@<-3ex>[l]_{G_{\lambda}}\ar@<3ex>[l]^{G_{\rho}}\mathcal{T}''.}$$
\end{definition}
For more details of
recollements of abelian categories we refer the reader to \cite{cp14}.\\

\hspace{-0.4cm}\textbf{2.3}\emph{~ Upper triangular matrix rings.} Let $A$, $B$ be two rings and $T=\left(\begin{matrix}  A & M \\  0 & B \\\end{matrix}\right)$ with $M$ an $A$-$B$-bimodule. Next, we recall the description of left $T$-modules via column vectors. Let $X_{1}\in A$-Mod and $X_{2}\in B$-Mod, and let $\phi^{X}:M\otimes_{B} X_{2}\rightarrow X_{1}$ be a homomorphism of left $A$-modules. The left $T$-module structure on $X=\binom{X_{1}}{X_{2}}$ is defined by the following identity
$$\left(\begin{matrix}  a & m\\
0&b\\\end{matrix}\right)\left(\begin{matrix}  x_{1}\\
x_{2}\\\end{matrix}\right)=\left(\begin{matrix}  ax_{1}+\phi^{X}(m\otimes x_{2}) \\
bx_{2}\\\end{matrix}\right),$$
where $a\in A,~b\in B,~m\in M,~x_{i}\in X_{i}$ for $i=1,~2$.	
According to \cite[Theorem 1.5]{gr82}, $T$-Mod is equivalent to the category whose objects are triples $X=\binom{X_{1}}{X_{2}}_{\phi^{X}}$, where $X_{1}\in A$-$\mathrm{Mod}$, $X_{2}\in B$-$\mathrm{Mod}$ and $\phi^{X}:M\otimes_{B} X_{2}\rightarrow X_{1}$ is an $A$-homomorphism, and whose morphisms between two objects $X=\binom{X_{1}}{X_{2}}_{\phi^{X}}$ and $Y=\binom{Y_{1}}{Y_{2}}_{\phi^{Y}}$~are pairs $\binom{f_{1}}{f_{2}}$ such that $f_{1}\in \mathrm{Hom}_{A}(X_{1},Y_{1})$, $f_{2}\in \mathrm{Hom}_{B}(X_{2},Y_{2})$, satisfying that the diagram
$$\xymatrix{
  M\otimes X_{2} \ar[d]_{\phi^{X}} \ar[r]^{1_{M}\otimes_{B} f_{2}} & M\otimes_{B} Y_{2} \ar[d]_{ \phi^{Y}} \\
  X_{1} \ar[r]^{f_{1}} & Y_{1}   }
$$
is commutative. In the rest of the paper we identify $T$-Mod with this category and, whenever there is no possible confusion, we omit the homomorphism $\phi$. Consequently, throughout the paper, a left $T$-module is a pair $\binom{X_{1}}{X_{2}}$. Given such a module $X$, we denote by $\psi^{M}$ the morphism from $X_{2}$ to $\mathrm{Hom}_{A}(M,X_{1})$ given by $\psi^{M}(x)(m)=\phi^{X}( m\otimes x)$ for each $x\in X_{2},~m\in M$.

Note that a sequence of $T$-modules
$$0\rightarrow\left(\begin{matrix} M_{1}' \\   M_{2}' \\\end{matrix}\right)\rightarrow\left(\begin{matrix}  M_{1}  \\   M_{2} \\\end{matrix}\right)\rightarrow\left(\begin{matrix}  M_{1}''  \\   M_{2}'' \\\end{matrix}\right)\rightarrow0$$
is exact if and only if both sequences $0\rightarrow M_{1}'\rightarrow M_{1}\rightarrow M_{1}''\rightarrow0$ of $A$-modules
and $0\rightarrow M_{2}'\rightarrow M_{2}\rightarrow M_{2}''\rightarrow0$ of $B$-modules are exact.

\section { \bf Cotorsion pairs over upper triangular matrix rings }
\bigskip
According to the recollement constructed by \cite{cp14} and \cite{zh13}, we have the following recollement of abelian categories:
$$\xymatrix{A\text{-}\mathrm{Mod}\ar[r]^{i_{\ast}}&\ar@<-3ex>[l]_{i^{\ast}}
\ar@<3ex>[l]^{i^{!}}T\text{-}\mathrm{Mod}
\ar[r]^{j^{\ast}}&\ar@<-3ex>[l]_{j_{!}}\ar@<3ex>[l]^{j_{\ast}}B\text{-}\mathrm{Mod},}$$
where $i^{\ast}$ is given by $\left(\begin{matrix}  X  \\   Y \\\end{matrix}\right)_{\phi}\mapsto \mathrm{coker}\phi$; $i_{\ast}$ is given by $X\mapsto \left(\begin{matrix}  X  \\   0 \\\end{matrix}\right)$; $i^{!}$ is given by $\left(\begin{matrix}  X  \\   Y \\\end{matrix}\right)_{\phi}\mapsto X$; $j_{!}$ is given by $Y\mapsto \left(\begin{matrix}  M\otimes _{B}Y  \\   Y \\\end{matrix}\right)_{id}$; $j^{\ast}$ is given by $\left(\begin{matrix}  X  \\   Y \\\end{matrix}\right)_{\phi}\mapsto Y$; $j_{\ast}$ is given by $Y\mapsto \left(\begin{matrix}  0  \\   Y \\\end{matrix}\right)$.
  Note that the functor $i_{\ast}$, $i^{!}$, $j^{\ast}$, $j_{\ast}$ defined above are exact. Moreover, by \cite[Lemma 3.2]{lu17}, $i^{!}$ admits a right adjoint functor $i_{?}:A\text{-}\mathrm{Mod}\rightarrow T\text{-}\mathrm{Mod}$ given by $X\mapsto \left(\begin{matrix}  X  \\ \mathrm{Hom}_{A}(M,X) \\\end{matrix}\right)_{}$, and $j_{\ast}$ admits a right adjoint functor $j^{?}:T\text{-}\mathrm{Mod}\rightarrow B\text{-}\mathrm{Mod}$ given by $\left(\begin{matrix}  X  \\   Y \\\end{matrix}\right)_{\psi}\mapsto \mathrm{ker}\psi$.

  \begin{lemma} \label{lem3.1} (see \cite[Lemma 3.10]{lu17}) Let $R$ and $S$ be two rings, and let $F:R\text{-}\mathrm{Mod}\rightarrow S\text{-}\mathrm{Mod}$ be a functor admitting a right adjoint functor $G$. If $F$ is an exact functor and preserves projective modules, or $G$ is an exact functor and preserves injective modules, then $\mathrm{Ext}^{k}_{S}(F(X), Y)\cong \mathrm{Ext}^{k}_{R}(X, G(Y))$ for $k\geq1$.
\end{lemma}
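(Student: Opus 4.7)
The plan is to compute $\Ext^{k}$ on both sides via resolutions and identify the resulting cochain complexes using the adjunction isomorphism $\Hom_{S}(F(-),-)\cong \Hom_{R}(-,G(-))$. The two hypotheses give dual arguments; in each case, one side already has a nice resolution, and the adjunction transfers the computation to the other side. Since the Ext groups in this paper are the ordinary derived functors of $\Hom$, the whole argument will rest on upgrading the adjunction from a natural isomorphism of abelian groups to an isomorphism of cochain complexes.

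Under the first hypothesis ($F$ is exact and preserves projectives), I would take a projective resolution $P_{\bullet}\to X$ in $R$-Mod. Exactness of $F$ makes $F(P_{\bullet})\to F(X)$ a resolution, and preservation of projectives makes it a \emph{projective} resolution of $F(X)$ in $S$-Mod. Applying $\Hom_{S}(-,Y)$ and using the adjunction termwise yields a natural isomorphism of cochain complexes
\[
\Hom_{S}(F(P_{\bullet}),Y)\;\cong\;\Hom_{R}(P_{\bullet},G(Y)).
\]
Taking $k$-th cohomology of the left-hand side computes $\Ext^{k}_{S}(F(X),Y)$, while the right-hand side computes $\Ext^{k}_{R}(X,G(Y))$ (noting that one does not need $G(Y)$ to have any particular resolution, since Ext can be computed from either variable using a projective resolution of the first).

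Under the second hypothesis ($G$ is exact and preserves injectives), the argument is dual. I would take an injective resolution $Y\to I^{\bullet}$ in $S$-Mod. Exactness of $G$ guarantees $G(Y)\to G(I^{\bullet})$ is exact, and preservation of injectives makes it an injective resolution of $G(Y)$ in $R$-Mod. Applying $\Hom_{R}(X,-)$ and invoking the adjunction yields a natural isomorphism of cochain complexes
\[
\Hom_{R}(X,G(I^{\bullet}))\;\cong\;\Hom_{S}(F(X),I^{\bullet}),
\]
whose $k$-th cohomology is $\Ext^{k}_{R}(X,G(Y))$ on the left and $\Ext^{k}_{S}(F(X),Y)$ on the right.

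There is no genuine obstacle here: the only point to verify carefully is that the adjunction isomorphism is natural in both variables, so that it commutes with the differentials of $\Hom_{S}(F(P_{\bullet}),Y)$ and $\Hom_{R}(P_{\bullet},G(Y))$ (respectively $\Hom_{R}(X,G(I^{\bullet}))$ and $\Hom_{S}(F(X),I^{\bullet})$). This naturality is built into the definition of an adjunction, so the proof reduces to writing down the two displayed isomorphisms and taking cohomology.
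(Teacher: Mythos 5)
Your proof is correct and complete. Note that the paper does not actually prove this lemma—it is cited verbatim from \cite[Lemma 3.10]{lu17}—so there is no internal proof to compare against, but your argument is exactly the standard one: transport a projective resolution of $X$ through $F$ (resp. an injective resolution of $Y$ through $G$), observe that exactness plus preservation of projectives (resp. injectives) yields a resolution on the other side, and then use the naturality of the adjunction isomorphism to identify the two Hom-complexes term by term before passing to cohomology. The one subtlety you correctly flag—that naturality of the adjunction in the resolving variable is what makes the identification a map of cochain complexes—is indeed the crux, and you handle it properly.
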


Keeping the notations as above, we have the following lemma.
\begin{lemma}\label{lem3.2} Let $L \in A\text{-}\mathrm{Mod}$, $Y \in B\text{-}\mathrm{Mod}$, $N=\left(\begin{matrix}  N_{1}  \\   N_{2} \\\end{matrix}\right)_{\phi^{N}}$, $X=\left(\begin{matrix}  X_{1}  \\   X_{2} \\\end{matrix}\right)_{\psi^{X}}\in T\text{-}\mathrm{Mod}$. We have the following
natural isomorphisms:

(1) $\mathrm{Ext}^{1}_{T}(i_{\ast}L, N)\cong \mathrm{Ext}^{1}_{A}(L, i^{!}N)$;

(2) $\mathrm{Ext}^{1}_{B}(j^{\ast}X, Y)\cong \mathrm{Ext}^{1}_{T}(X, j_{\ast}Y)$;

(3) If $\phi^{N}$ is monomorphic, then $\mathrm{Ext}^{1}_{A}(i^{\ast}N, L)\cong \mathrm{Ext}^{1}_{T}(N, i_{\ast}L)$;

(4) If $\psi^{X}$ is epimorphic, then $\mathrm{Ext}^{1}_{T}(j_{\ast}Y, X)\cong \mathrm{Ext}^{1}_{B}(Y, j^{?}X)$;

(5) If $\mathrm{Tor}^{B}_{1}(M_{B},Y)=0$, then $\mathrm{Ext}^{1}_{T}(j_{!}Y, X)\cong \mathrm{Ext}^{1}_{B}(Y, j^{\ast}X)$;

(6) If $\mathrm{Ext}^{1}_{A}(_{A}M,L)=0$, then $\mathrm{Ext}^{1}_{A}(i^{!}N, L)\cong \mathrm{Ext}^{1}_{T}(N, i_{?}L)$.

\end{lemma}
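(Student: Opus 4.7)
The plan is to split the six isomorphisms into three groups according to the tool available for each.

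For parts (1) and (2), I would apply Lemma \ref{lem3.1} directly to the adjoint pairs $(i_{\ast}, i^{!})$ and $(j^{\ast}, j_{\ast})$. Both $i_{\ast}$ and $j_{\ast}$ are exact, so I only need to verify that $i_{\ast}$ preserves projectives and $j_{\ast}$ preserves injectives. This is immediate from the characterizations of projective and injective $T$-modules recalled in the introduction: $i_{\ast}L = \left(\begin{smallmatrix} L \\ 0 \end{smallmatrix}\right)$ has zero structure map (trivially monic), cokernel $L$ (projective), and zero $B$-slot, hence is projective when $L$ is; dually, $j_{\ast}Y = \left(\begin{smallmatrix} 0 \\ Y \end{smallmatrix}\right)$ has its $\psi$-map $Y \to \mathrm{Hom}_{A}(M,0)=0$ trivially epic with kernel $Y$, so it is injective when $Y$ is.

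For parts (5) and (6), I would compute $\mathrm{Ext}^{1}$ through a single short resolution. For (5), pick a projective presentation $0 \to K \to P \to Y \to 0$ in $B\text{-Mod}$; the hypothesis $\mathrm{Tor}_{1}^{B}(M, Y) = 0$ is exactly what makes the $A$-coordinate $0 \to M\otimes_{B} K \to M\otimes_{B} P \to M\otimes_{B} Y \to 0$ exact, so $j_{!}$ sends the presentation to a short exact sequence of $T$-modules, with $j_{!}P$ projective there. Then $\mathrm{Ext}^{1}_{T}(j_{!}Y, X)$ equals the cokernel of $\mathrm{Hom}_{T}(j_{!}P, X) \to \mathrm{Hom}_{T}(j_{!}K, X)$, which through the $(j_{!}, j^{\ast})$-adjunction becomes $\mathrm{Ext}^{1}_{B}(Y, j^{\ast}X)$. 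Part (6) is formally dual: take an injective copresentation $0 \to L \to I \to C \to 0$ in $A\text{-Mod}$; the hypothesis $\mathrm{Ext}^{1}_{A}(M, L) = 0$ makes $\mathrm{Hom}_{A}(M, -)$ exact on it, hence $i_{?}$ lifts it to an exact sequence in $T\text{-Mod}$ with $i_{?}I$ injective, and the $(i^{!}, i_{?})$-adjunction delivers the stated isomorphism.

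The main obstacle is (3) and (4), because the adjoints $i^{\ast}$ and $j^{?}$ are not exact and Lemma \ref{lem3.1} does not apply. My approach is to exploit the canonical recollement short exact sequences: when $\phi^{N}$ is monic the counit--unit data assemble into $0 \to j_{!}j^{\ast}N \to N \to i_{\ast}i^{\ast}N \to 0$, and dually when $\psi^{X}$ is epic one obtains $0 \to j_{\ast}j^{?}X \to X \to i_{?}i^{!}X \to 0$. Applying $\mathrm{Hom}_{T}(-, i_{\ast}L)$ to the first sequence, the flanking terms vanish: $\mathrm{Hom}_{T}(j_{!}j^{\ast}N, i_{\ast}L) \cong \mathrm{Hom}_{B}(j^{\ast}N, j^{\ast}i_{\ast}L) = 0$, and $\mathrm{Ext}^{1}_{T}(j_{!}j^{\ast}N, i_{\ast}L) = 0$ because any such extension splits --- the identity structure map on the $j_{!}$-quotient forces a section of the first-coordinate surjection in the extension. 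Combined with part (1) and $i^{!}i_{\ast} = \mathrm{id}$, this yields (3). Part (4) is handled symmetrically, using $i^{!}j_{\ast} = 0$ and the dual splitting $\mathrm{Ext}^{1}_{T}(j_{\ast}Y, i_{?}L) = 0$ (the adjoint $\psi$-map of any such extension must admit a section), then reducing through part (2) and $j^{\ast}j_{\ast} = \mathrm{id}$. The technical heart of the proof will be verifying these two splittings cleanly.
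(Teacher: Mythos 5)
Your proposal is correct throughout, but it takes a genuinely different route from the paper on parts (3)--(6).

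For (1) and (2) you match the paper exactly: Lemma \ref{lem3.1} applied to the adjoint pairs $(i_{\ast},i^{!})$ and $(j^{\ast},j_{\ast})$, with the observation that $i_{\ast}$ is exact and preserves projectives, $j_{\ast}$ is exact and preserves injectives.

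For (5) and (6) your route is more direct. You push a projective presentation $0\to K\to P\to Y\to 0$ in $B$-Mod through $j_{!}$ (the Tor hypothesis guaranteeing exactness of the $A$-coordinate), and then compute $\mathrm{Ext}^{1}_{T}(j_{!}Y,X)$ as a cokernel of Hom-groups, transported across the $(j_{!},j^{\ast})$ adjunction; part (6) is the formal dual with an injective copresentation pushed through $i_{?}$. The paper instead fixes $j_{!}Y$ and varies the second argument along $0\to\binom{X_{1}}{0}\to X\to\binom{0}{X_{2}}\to 0$, showing separately that $\mathrm{Ext}^{1}_{T}(j_{!}Y,\binom{X_{1}}{0})=0$ and $\mathrm{Ext}^{2}_{T}(j_{!}Y,\binom{X_{1}}{0})=0$ (the latter requiring exactly the same projective presentation argument you use, but nested one level deeper). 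Your version avoids the $\mathrm{Ext}^{2}$ detour and is shorter.

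For (3) and (4) the paper takes a $T$-projective presentation $0\to K\to P\to N\to 0$, applies the Snake Lemma to obtain a short exact sequence of cokernels with $\mathrm{coker}\,\phi^{P}$ projective, and then defers the conclusion to the first paragraph of \cite[Lemma 3.10]{lu17}. You instead use the recollement short exact sequence $0\to j_{!}j^{\ast}N\to N\to i_{\ast}i^{\ast}N\to 0$, which is exact precisely because $\phi^{N}$ is monic, and then kill the flanking terms: $\mathrm{Hom}_{T}(j_{!}j^{\ast}N,i_{\ast}L)=0$ is immediate from $j^{\ast}i_{\ast}=0$, and $\mathrm{Ext}^{1}_{T}(j_{!}j^{\ast}N,i_{\ast}L)=0$ because the identity structure map on $j_{!}j^{\ast}N$ forces a compatible splitting in any extension by $i_{\ast}L$. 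I checked the splitting argument: the retraction on the $A$-coordinate induced by the identity transition map does assemble with the identification of $B$-coordinates into a $T$-linear retraction, so the extension group vanishes. Combined with part (1) and $i^{!}i_{\ast}=\mathrm{id}$, this gives (3); the dual argument for (4) uses $i^{!}j_{\ast}=0$, the $\psi$-version of the splitting, part (2), and $j^{\ast}j_{\ast}=\mathrm{id}$. Your version has the advantage of being self-contained (no appeal to the external reference), and it also makes explicit the role of the recollement unit and counit sequences, which is conceptually cleaner given the framing of the paper; the paper's version stays closer to a bare dimension-shift in $T$-Mod. Both are correct.

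Two small points to flesh out if you write this up fully: in (3) you should state clearly that both $\mathrm{Hom}_{T}(j_{!}j^{\ast}N,i_{\ast}L)$ and $\mathrm{Ext}^{1}_{T}(j_{!}j^{\ast}N,i_{\ast}L)$ vanish, since the long exact sequence needs both for the connecting map $\mathrm{Ext}^{1}_{T}(i_{\ast}i^{\ast}N,i_{\ast}L)\to\mathrm{Ext}^{1}_{T}(N,i_{\ast}L)$ to be an isomorphism; and the splitting verification, which you flag as the technical heart, does require checking that the section on the $A$-coordinate is $T$-linear together with the $B$-coordinate isomorphism, which goes through as you anticipate.
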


\begin{proof}
(1) The isomorphism follows from Lemma \ref{lem3.1} and the fact that $i_{\ast}$ is an exact functor and preserves projective modules.

(2) The isomorphism follows from Lemma \ref{lem3.1} and the fact that $j_{\ast}$ is an exact functor and preserves injective modules.

(3) If $\phi^{N}$ is monomorphic, consider a short exact sequence in $T\text{-}\mathrm{Mod}$
$$0\longrightarrow\left(\begin{matrix}  K_{1}  \\   K_{2} \\\end{matrix}\right)_{\phi^{K}}\longrightarrow\left(\begin{matrix}  P_{1}  \\   P_{2} \\\end{matrix}\right)_{\phi^{P}}\longrightarrow\left(\begin{matrix}  N_{1}  \\   N_{2} \\\end{matrix}\right)_{\phi^{N}}\longrightarrow0,$$
where $\left(\begin{matrix}  P_{1}  \\   P_{2} \\\end{matrix}\right)_{\phi^{P}}$ is a projective $T$-module. We get a commutative diagram
$$\xymatrix{
  & M\otimes _{B}K_{2} \ar[d]_{\phi^{K}} \ar[r]^{} &M\otimes _{B}P_{2} \ar[d]_{\phi^{P}} \ar[r]^{} & M\otimes _{B}N_{2} \ar[d]_{\phi^{N}} \ar[r]^{} & 0\\
  0 \ar[r]^{} & K_{1} \ar[r]^{} & P_{1} \ar[r]^{} & N_{1} \ar[r]^{} & 0.   }$$
Now the Snake Lemma gives us an exact sequence $0\rightarrow \mathrm{coker}\phi^{K}\rightarrow \mathrm{coker}\phi^{P}\rightarrow \mathrm{coker}\phi^{N}\rightarrow0$. Since $P$ is projective, $\mathrm{coker}\phi^{P}$ is a projective $A$-module. Thus the isomorphism follows from the first paragraph of the proof in \cite[Lemma 3.10]{lu17}.

The proof of (4) is dual to that of (3).

(5) Consider the short exact sequence in $T$-Mod
$$0\longrightarrow\left(\begin{matrix}  X_{1}  \\0  \\\end{matrix}\right)\longrightarrow\left(\begin{matrix}  X_{1}  \\ X_{2} \\\end{matrix}\right)_{\phi^{X}}\longrightarrow\left(\begin{matrix}0\\ X_{2} \\\end{matrix}\right)\longrightarrow0.\eqno(\ast)
$$
Since the map related to $j_{!}Y=\left(\begin{matrix}  M\otimes _{B}Y  \\   Y \\\end{matrix}\right)$ is identity and  $i^{\ast}\left(\begin{matrix}  M\otimes _{B}Y  \\   Y \\\end{matrix}\right)_{id}=0$, the statement (3) above yields that

\begin{align*}
\mathrm{Ext}^{1}_{T}(j_{!}Y,\left(\begin{matrix}  X_{1}  \\0  \\\end{matrix}\right))
&=\mathrm{Ext}^{1}_{T}(\left(\begin{matrix}  M\otimes _{B}Y  \\   Y \\\end{matrix}\right)_{id},i_{\ast} X_{1} )\cong\mathrm{Ext}^{1}_{A}(i^{\ast}\left(\begin{matrix}  M\otimes _{B}Y  \\   Y \\\end{matrix}\right)_{id}, X_{1}  )=0.
\end{align*}
Next, we claim that $\mathrm{Ext}^{2}_{T}(\left(\begin{smallmatrix}  M\otimes _{B}Y  \\   Y \\\end{smallmatrix}\right),\left(\begin{smallmatrix}  X_{1}  \\0  \\\end{smallmatrix}\right))=0$. Let $0\rightarrow K\stackrel{g }\rightarrow P\stackrel{h }\rightarrow Y\rightarrow0$ be an exact sequence with $P$ projective. Applying $j_{!}$, we get an exact sequence $ j_{!}K\stackrel{ }\longrightarrow j_{!}P\stackrel{ } \longrightarrow j_{!}Y\longrightarrow0$ which can be visualised by the commutative diagram:
$$\xymatrix{
  & M\otimes _{B}K \ar[d]_{\phi} \ar[r]^{1\otimes g} & M\otimes _{B}P \ar@{=}[d]_{} \ar[r]^{1\otimes h} & M\otimes _{B}Y\ar[r]^{} \ar@{=}[d]_{} &0 \\
  0 \ar[r]^{} & \mathrm{ker}(1\otimes h) \ar[r]^{\sigma} & M\otimes _{B}P \ar[r]^{1\otimes h} & M\otimes _{B}Y \ar[r]^{} & 0,   }$$
where $\phi$ is the unique $A$-homomorphism such that $1\otimes g=\sigma\phi$. If $\mathrm{Tor}^{B}_{1}(M_{B},Y)=0$, then $1\otimes g$ is monomorphic, so $\phi$ is an isomorphism by the Five Lemma. Therefore, we get an exact sequence in $T$-Mod:
$$0\longrightarrow \binom{\mathrm{ker}(1\otimes h)}{K}_{\phi}\longrightarrow j_{!}P\stackrel{ j_{!}h} \longrightarrow j_{!}Y\longrightarrow0.$$
 Applying $\mathrm{Hom}_{T}(-,\left(\begin{smallmatrix}  X_{1}  \\0  \\\end{smallmatrix}\right))$, we get the exact sequence
 $$\cdots\rightarrow \mathrm{Ext}^{1}_{T}(\binom{\mathrm{ker}(1\otimes h)}{K}_{\phi},\binom{X_{1}}{0})\rightarrow\mathrm{Ext}^{2}_{T}(j_{!}Y,\binom{X_{1}}{0})
 \rightarrow\mathrm{Ext}^{2}_{T}(j_{!}P,\binom{X_{1}}{0}).$$
Since $j_{!}P$ is projective, we have $\mathrm{Ext}^{2}_{T}(j_{!}P,\binom{X_{1}}{0})=0$. Since  $\phi$ is an isomorphism, we also have $\mathrm{Ext}^{1}_{T}(\binom{\mathrm{ker}(1\otimes h)}{K}_{\phi},\binom{X_{1}}{0})=0$. Then it is easy to check that $\mathrm{Ext}^{2}_{T}(j_{!}Y,\left(\begin{smallmatrix}  X_{1}  \\0  \\\end{smallmatrix}\right))=0$. This proves our claim.

 Finally, applying $\mathrm{Hom}_{T}(j_{!}Y,-)$ to the exact sequence ($\ast$), we get the isomorphisms
 \begin{align*}
\mathrm{Ext}^{1}_{T}(j_{!}Y,X)
& =\mathrm{Ext}^{1}_{T}({\left(\begin{matrix}  M\otimes _{B}Y  \\   Y \\\end{matrix}\right)_{id}}, {\left(\begin{matrix}  X_{1}  \\   X_{2} \\\end{matrix}\right)_{\psi^{X}}})
 \cong \mathrm{Ext}^{1}_{T}({\left(\begin{matrix}  M\otimes _{B}Y  \\   Y \\\end{matrix}\right)_{id}}, {\left(\begin{matrix}0\\ X_{2} \\\end{matrix}\right)})\\
 &\cong\mathrm{Ext}^{1}_{B}(Y, X_{2})\cong\mathrm{Ext}^{1}_{B}(Y, j^{\ast}(X)).
 \end{align*}

The proof of (6) is dual to that of (5).
\end{proof}
For two classes $\mathcal{C}$ and $\mathcal{D}$ of modules, we set the following classes of $T$-modules:
\begin{align*}
&\mathrm{Rep}(\mathcal{C}_{A},\mathcal{D}_{B})=\{N=\left(\begin{smallmatrix}  N_{1}  \\   N_{2} \\\end{smallmatrix}\right)_{\phi^{N}}\in\text{T-Mod}\mid N_{1}\in\mathcal{C}_{A},~N_{2}\in\mathcal{D}_{B} \};\\
&\Phi(\mathcal{C}_{A},\mathcal{D}_{B})=\{X=\left(\begin{smallmatrix}  X_{1}  \\   X_{2} \\\end{smallmatrix}\right)_{\phi^{X}}\in\text{T-Mod}\mid \phi^{X}\text{ is monomorphic},~\mathrm{coker}\phi^{X}\in\mathcal{C}_{A},~X_{2}\in\mathcal{D}_{B} \};\\
&\Psi(\mathcal{C}_{A},\mathcal{D}_{B})=\{Y=\left(\begin{smallmatrix}  Y_{1}  \\   Y_{2} \\\end{smallmatrix}\right)_{\psi^{Y}}\in\text{T-Mod}\mid \psi^{Y}\text{ is epimorphic},~Y_{1}\in\mathcal{C}_{A},~\mathrm{ker}\psi^{Y}\in\mathcal{D}_{B} \}.
\end{align*}
Similarly, the notation such as $\mathrm{Rep}(\mathcal{C}_{A}^{\perp}{,\mathcal{D}_{B}^{\perp}})$ is defined by $$\mathrm{Rep}(\mathcal{C}_{A}^{\perp},{\mathcal{D}_{B}^{\perp}})=\{N=\left(\begin{smallmatrix}  N_{1}  \\   N_{2} \\\end{smallmatrix}\right)_{\phi^{N}}\in\text{T-Mod}\mid N_{1}\in\mathcal{C}_{A}^{\perp},~N_{2}\in\mathcal{D}_{B}^{\perp} \}.$$
Note that the notation such as $i_{\ast}(\mathcal{C}_{A})$ is the set $\{i_{\ast}(C)\mid C\in\mathcal{C}_{A}\}$.
If the classes $\mathcal{C}$ and $\mathcal{D}$ of modules are the same, we set
$\mathrm{Rep}(\mathcal{C}_{A},\mathcal{D}_{B})=\mathrm{Rep}(\mathcal{C})$, $\Phi(\mathcal{C}_{A},\mathcal{D}_{B})=\Phi(\mathcal{C})$ and $\Psi(\mathcal{C}_{A},\mathcal{D}_{B})=\Psi(\mathcal{C})$.
Denote by $\mathcal{I}_{A}$ (resp. $\mathcal{P}_{B}$) the class of injective left $A$-modules (resp. projective left $B$-modules). We have the following observation.

\begin{proposition} \label{pro3.3}Let $T=\left(\begin{matrix}  A & M \\  0 & B \\\end{matrix}\right)$ be an upper triangular matrix ring and $\mathcal{C}_{A}$ (resp. $\mathcal{D}_{B}$) a class of $A$-modules (resp. $B$-modules). Set
\begin{align*}
   & \mathcal{S}_{1}(\mathcal{C}_{A},\mathcal{D}_{B})=\{X\in \text{T-}\mathrm{Mod}\mid X\in i_{\ast}(\mathcal{C}_{A})~or~X\in j_{!}(\mathcal{D}_{B})\}; \\
   & \mathcal{S}_{2}(\mathcal{C}_{A},\mathcal{D}_{B})=\{X\in \text{T-}\mathrm{Mod}\mid X\in i_{?}(\mathcal{C}_{A})~or~X\in j_{\ast}(\mathcal{D}_{B})\};\\
   & \mathcal{S}(\mathcal{C}_{A},\mathcal{D}_{B})=\{X\in \text{T-}\mathrm{Mod}\mid X\in i_{\ast}(\mathcal{C}_{A})~or~X\in j_{\ast}(\mathcal{D}_{B})\}.
\end{align*}

($a$) If $\mathrm{Tor}_{1}^{B}(M_{B},P)=0$ for any $P\in \mathcal{D}_{B}$, then one has $\mathcal{S}_{1}(\mathcal{C}_{A},\mathcal{D}_{B})^{\perp}=\mathrm{Rep}(\mathcal{C}_{A}^{\perp},\mathcal{D}_{B}^{\perp})$.

($b$) If $\mathrm{Ext}_{A}^{1}(_{A}M,Q)=0$ for any $Q\in \mathcal{C}_{A}$, then one has $^{\perp}\mathcal{S}_{2}(\mathcal{C}_{A},\mathcal{D}_{B})=\mathrm{Rep}(^{\perp}\mathcal{C}_{A},{^{\perp}\mathcal{D}_{B}})$.

($c$)  If $\mathcal{I}_{A}\subseteq \mathcal{C}_{A}$, then one has $^{\perp}\mathcal{S}(\mathcal{C}_{A},\mathcal{D}_{B})=\Phi(^{\perp}\mathcal{C}_{A},{^{\perp}\mathcal{D}_{B}})$.

($d$)  If $\mathcal{P}_{B}\subseteq \mathcal{D}_{B}$, then one has $\mathcal{S}(\mathcal{C}_{A},\mathcal{D}_{B})^{\perp}=\Psi(\mathcal{C}_{A}^{\perp},\mathcal{D}_{B}^{\perp})$.

\end{proposition}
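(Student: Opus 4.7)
The strategy for all four parts is uniform: for a $T$-module, unfold membership in $\mathcal{S}_i^{\perp}$ (resp.\ ${}^{\perp}\mathcal{S}_i$) as the vanishing of $\mathrm{Ext}^{1}_T$ against each module in the generating set, and transport each such Ext across the corresponding adjunction using Lemma~\ref{lem3.2} to rewrite it as an Ext over $A$ or $B$. Parts (a) and (b) then follow immediately; the only subtlety, appearing in (c) and (d), is that Lemma~\ref{lem3.2}(3)--(4) require $\phi^N$ to be monomorphic (resp.\ $\psi^X$ to be epimorphic), and this must be forced separately -- which is precisely where $\mathcal{I}_A \subseteq \mathcal{C}_A$ and $\mathcal{P}_B \subseteq \mathcal{D}_B$ enter.

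For (a), $N = \binom{N_1}{N_2}_{\phi^N}$ lies in $\mathcal{S}_1^{\perp}$ iff $\mathrm{Ext}^1_T(i_*C, N) = 0$ for all $C \in \mathcal{C}_A$ and $\mathrm{Ext}^1_T(j_!D, N) = 0$ for all $D \in \mathcal{D}_B$. Lemma~\ref{lem3.2}(1) rewrites the first as $\mathrm{Ext}^1_A(C, N_1)$, while Lemma~\ref{lem3.2}(5) -- precisely where the assumption $\mathrm{Tor}^B_1(M, D) = 0$ is used -- rewrites the second as $\mathrm{Ext}^1_B(D, N_2)$. Hence $N \in \mathcal{S}_1^{\perp}$ iff $N_1 \in \mathcal{C}_A^{\perp}$ and $N_2 \in \mathcal{D}_B^{\perp}$, which is the definition of $\mathrm{Rep}(\mathcal{C}_A^{\perp}, \mathcal{D}_B^{\perp})$. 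Part (b) is symmetric, applying Lemma~\ref{lem3.2}(6) (where the Ext-vanishing hypothesis enters) for $i_?$ and Lemma~\ref{lem3.2}(2) for $j_*$.

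For (c), one inclusion is immediate from Lemma~\ref{lem3.2}(3) (applicable since $\phi^N$ is mono) together with Lemma~\ref{lem3.2}(2). For the reverse inclusion, Lemma~\ref{lem3.2}(2) at once yields $N_2 \in {}^{\perp}\mathcal{D}_B$. The crucial step is showing $\phi^N\colon M \otimes_B N_2 \to N_1$ is injective, using $\mathcal{I}_A \subseteq \mathcal{C}_A$. Set $K = \ker\phi^N$, fix an embedding $\iota\colon K \hookrightarrow E$ in some $E \in \mathcal{I}_A$, and extend $\iota$ to an $A$-linear $\alpha\colon M \otimes_B N_2 \to E$ by injectivity of $E$. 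Then the $T$-module $X = \binom{E \oplus N_1}{N_2}$ with structure map $m \otimes n_2 \mapsto \bigl(\alpha(m \otimes n_2),\, \phi^N(m \otimes n_2)\bigr)$ fits in a short exact sequence $0 \to i_*E \to X \to N \to 0$ in $T$-Mod which splits iff there is $g\colon N_1 \to E$ with $\alpha = g \circ \phi^N$; but then $\iota = \alpha|_K = 0$, forcing $K = 0$. So $\mathrm{Ext}^1_T(N, i_*E) = 0$ forces $\phi^N$ mono, and Lemma~\ref{lem3.2}(3) then gives $\mathrm{coker}\,\phi^N \in {}^{\perp}\mathcal{C}_A$.

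Part (d) is proved dually. Given $X = \binom{X_1}{X_2}_{\psi^X} \in \mathcal{S}^{\perp}$, Lemma~\ref{lem3.2}(1) gives $X_1 \in \mathcal{C}_A^{\perp}$. To force $\psi^X$ epimorphic, take a surjection $\pi\colon P \twoheadrightarrow \mathrm{coker}\,\psi^X$ with $P$ a projective $B$-module (so $P \in \mathcal{P}_B \subseteq \mathcal{D}_B$), and lift it through the quotient $\mathrm{Hom}_A(M, X_1) \twoheadrightarrow \mathrm{coker}\,\psi^X$ to a $B$-linear $\gamma\colon P \to \mathrm{Hom}_A(M, X_1)$. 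The $T$-module $Y = \binom{X_1}{X_2 \oplus P}$ with structure map adjoint to $(x, p) \mapsto \psi^X(x) + \gamma(p)$ sits in a short exact sequence $0 \to X \to Y \to j_*P \to 0$ which splits iff $\gamma$ factors through $\psi^X$; composing with $\pi$ this would force $\pi = 0$ and hence $\mathrm{coker}\,\psi^X = 0$. Once $\psi^X$ is epi, Lemma~\ref{lem3.2}(4) yields $\ker\psi^X \in \mathcal{D}_B^{\perp}$. The principal obstacle throughout is the explicit construction of the non-split extensions in (c) and (d); all remaining steps are routine unfoldings via Lemma~\ref{lem3.2}.
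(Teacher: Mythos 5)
Your proof is correct and follows the same strategy as the paper: for (a) and (b), unfold the orthogonality against each generating piece and transport the $\mathrm{Ext}$ groups across the adjunctions via Lemma~\ref{lem3.2}; for (c) and (d), build an explicit extension that must split because of orthogonality against $\mathcal{S}$, and read off from the splitting map that $\phi^N$ is monomorphic (resp.\ $\psi^X$ is epimorphic) before invoking Lemma~\ref{lem3.2}(3) (resp.~(4)). The only cosmetic difference is in (c): you aim the extension directly at an injective $E$ embedding $\ker\phi^N$, whereas the paper first establishes that $\mathrm{Hom}_A(\phi^X,N)$ is surjective for every $N\in\mathcal{C}_A$ and then appeals to injective cogenerators -- the same split-extension mechanism either way -- and your spelled-out argument for (d) is exactly the dual the paper leaves to the reader.
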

\begin{proof}
 Let $X=\left(\begin{smallmatrix}  X_{1}  \\   X_{2} \\\end{smallmatrix}\right)_{\phi^{X}}\in \mathcal{S}_{1}(\mathcal{C}_{A},\mathcal{D}_{B})^{\perp}$. Note that $X\in(i_{\ast}(\mathcal{C}_{A})\cup j_{!}(\mathcal{D}_{B}))^{\perp}=i_{\ast}(\mathcal{C}_{A})^{\perp}\cap j_{!}(\mathcal{D}_{B})^{\perp}$. Then ($a$) follows immediately from Lemma \ref{lem3.2} (1) and (5).

Let $Y=\left(\begin{smallmatrix}  Y_{1}  \\   Y_{2} \\\end{smallmatrix}\right)_{\phi^{Y}}\in {^{\perp}\mathcal{S}_{2}(\mathcal{C}_{A},\mathcal{D}_{B})}$. Note that $Y\in{^{\perp}(i_{?}(\mathcal{C}_{A})\cup j_{\ast}(\mathcal{D}_{B}))}={^{\perp}i_{?}(\mathcal{C}_{A})}\cap {^{\perp}j_{\ast}(\mathcal{D}_{B})}$. Then ($b$) follows immediately from Lemma \ref{lem3.2} (2) and (6).

For (c), if $X=\left(\begin{smallmatrix}  X_{1}  \\   X_{2} \\\end{smallmatrix}\right)_{\phi^{X}}\in\Phi(^{\perp}\mathcal{C}_{A},{^{\perp}\mathcal{D}_{B}})$, we get that $\phi^{X}$ is a monomorphism, $X_{2}\in{^{\perp}\mathcal{D}_{B}}$, $\mathrm{coker}\phi^{X}\in{^{\perp}\mathcal{C}_{A}}$. It follows from Lemma \ref{lem3.2} (2) and (3) that $X\in {^{\perp}i_{\ast}(\mathcal{C}_{A})}$ and $X\in { ^{\perp}j_{\ast}(\mathcal{D}_{B})}$, so we conclude that $X\in{^{\perp}\mathcal{S}(\mathcal{C}_{A},\mathcal{D}_{B})}$. Conversely, if $X=\left(\begin{smallmatrix}  X_{1}  \\   X_{2} \\\end{smallmatrix}\right)_{\phi^{X}}\in{^{\perp}\mathcal{S}(\mathcal{C}_{A},\mathcal{D}_{B})}$, we first show that $\mathrm{Hom}_{A}(\phi^{X},N)$ is epimorphic for each $A$-module $N\in \mathcal{C}_{A}$, that is, we must show for every homomorphism $f:M\otimes_{B}X_{2}\rightarrow N$, there exists a homomorphism $g$  making the following diagram commutative:
$$\xymatrix{
  M\otimes_{B}X_{2} \ar[d]_{f} \ar[r]^-{\phi^{X}} & X_{1} \ar@{-->}[ld]^{g}      \\
  N. }$$
Consider exact sequences
$0 \rightarrow N\stackrel{\binom{1}{0}}\rightarrow N\oplus X_{1}\stackrel{(0~~1)}\rightarrow X_{1}\rightarrow0$ and $0\rightarrow 0\rightarrow X_{2}\rightarrow X_{2}\rightarrow0$. Then we define an $A$-module homomorphism $\phi':=\binom{f}{\phi^{X}}:M\otimes_{B} X_{2}\rightarrow N\oplus X_{1} $ to get a $T$-module $\binom{N\oplus X_{1}}{X_{2}}_{\phi'}$. Since $(0~~1)\binom{f}{\phi^{X}}=\phi^{X}$, it is easy to check that the sequence

$$0\longrightarrow\left(\begin{matrix}  N  \\   0 \\\end{matrix}\right)_{0}\stackrel{\binom{\binom{1}{0}}{0}}\longrightarrow\left(\begin{matrix}  N\oplus X_{1}  \\   X_{2} \\\end{matrix}\right)_{\phi'}\stackrel{\binom{(0~1)}{1}}\longrightarrow\left(\begin{matrix}  X_{1}  \\   X_{2} \\\end{matrix}\right)_{\phi^{X}}\longrightarrow0$$
is exact in $T$-Mod. Note that $\mathrm{Ext}^{1}_{T}(X,i_{\ast}(N))=0$, hence the sequence above is split. So there exists a morphism $\binom{\alpha}{\beta}:\binom{X_{1}}{X_{2}}\rightarrow \binom{N\oplus X_{1}}{X_{2}}$ such that $\binom{(0~1)}{1}\binom{\alpha}{\beta}=id_{X}$, where $\alpha=\binom{a}{b}:X_{1}\rightarrow N\oplus X_{1}$ and $\beta: X_{2}\rightarrow X_{2}$. Hence $\beta=id_{X_{2}}$. We also have the following commutative diagram in $A$-Mod:
$$\xymatrixcolsep{4pc}\xymatrix{
  M\otimes_{B} X_{2} \ar[d]_{\binom{f}{\phi^{X}}} \ar@<0.5ex>[r]^{M\otimes 1} & M\otimes_{B} X_{2} \ar[d]^{\phi^{X}}\ar@<0.5ex>[l]^{M\otimes \beta}  \\
   N\oplus X_{1}\ar@<0.5ex>[r]^{(0~1)} & X_{1}.\ar@<0.5ex>[l]^{\alpha=\binom{a}{b}}   }$$
Since $M\otimes \beta=id_{M\otimes_{B}X_{2}},~(0~1)\binom{a}{b}=\mathrm{Id}_{X_{1}}$ and $\binom{f}{\phi^{X}}(M\otimes \beta)=\binom{a}{b}\phi^{X}$, we have $b=\mathrm{Id}_{X_{1}}$ and $\binom{f}{\phi^{X}}=\binom{a\phi^{X}}{\phi^{X}}$. So $f=a\phi^{X}$. If we put $g:=a$, then $f=g\phi^{X}$. By assumption, since $A$-Mod has injective cogenerators, $\phi^{X}$ is a monomorphism. On the other side, the isomorphisms from Lemma \ref{lem3.2} (2) and (3) imply that $\mathrm{coker}\phi^{X}\in{^{\perp}\mathcal{C}_{A}},~X_{2}\in{^{\perp}\mathcal{D}_{B}}$. Thus $X\in\Phi(^{\perp}\mathcal{C}_{A},{^{\perp}\mathcal{D}_{B}})$.

Similarly, ($d$) follows from Lemma \ref{lem3.2} (4) and (6).
\end{proof}
We construct four cotorsion pairs in $T$-Mod in the next theorem.
\begin{theorem} \label{the3.4}Let $\mathcal{A},~\mathcal{B}$, $\mathcal{C}$ and $\mathcal{D}$ be classes of modules. Suppose $(\mathcal{A}_{A},\mathcal{B}_{A})$ is a cotorsion pair in $A$-$\mathrm{Mod}$ generated by a class $\mathcal{M}_{A}$ and cogenerated by a class $\mathcal{N}_{A}$ $($e.g., $\mathcal{M}_{A}=\mathcal{A}_{A},~\mathcal{N}_{A}=\mathcal{B}_{A})$,
$(\mathcal{C}_{B},\mathcal{D}_{B})$  is a cotorsion pair in $B$-$\mathrm{Mod}$  generated by a class $\mathcal{U}_{B}$ and cogenerated by a class $\mathcal{V}_{B}$ $($e.g., $\mathcal{U}_{B}=\mathcal{C}_{B},~\mathcal{V}_{B}=\mathcal{D}_{B})$.

($a$) If $\mathrm{Tor}_{1}^{B}(M_{B},P)=0$ for any $P\in \mathcal{U}_{B}$, then the cotorsion pair in $T$-$\mathrm{Mod}$ generated by $\mathcal{S}_{1}(\mathcal{M}_{A},\mathcal{U}_{B})$ is  $$\mathfrak{C}_{1}=(^{\perp}\mathrm{Rep}(\mathcal{B}_{A},\mathcal{D}_{B}),\mathrm{Rep}(\mathcal{B}_{A},\mathcal{D}_{B})).$$

If $\mathcal{I}_{A}\subseteq \mathcal{N}_{A}$, then the cotorsion pair in $T$-$\mathrm{Mod}$ cogenerated by $\mathcal{S}(\mathcal{N}_{A},\mathcal{V}_{B})$ is
$$\mathfrak{C}_{2}=(\Phi(\mathcal{A}_{A},\mathcal{C}_{B}),\Phi(\mathcal{A}_{A},\mathcal{C}_{B})^{\perp}).$$

($b$)  If $\mathrm{Ext}_{A}^{1}(_{A}M,Q)=0$ for any $Q\in \mathcal{N}_{A}$, then the cotorsion pair in $T$-$\mathrm{Mod}$ cogenerated by $\mathcal{S}_{2}(\mathcal{N}_{A},\mathcal{V}_{B})$ is  $$\mathfrak{C}_{3}=(\mathrm{Rep}(\mathcal{A}_{A},\mathcal{C}_{B}),\mathrm{Rep}(\mathcal{A}_{A},\mathcal{C}_{B})^{\perp}).$$

If $\mathcal{P}_{B}\subseteq \mathcal{U}_{B}$, then the cotorsion pair in $T$-$\mathrm{Mod}$ generated by $\mathcal{S}(\mathcal{M}_{A},\mathcal{U}_{B})$ is
$$\mathfrak{C}_{4}=(^{\perp}\Psi(\mathcal{B}_{A},\mathcal{D}_{B}),\Psi(\mathcal{B}_{A},\mathcal{D}_{B})).$$
\end{theorem}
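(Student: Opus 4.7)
The plan is to read off each of the four cotorsion pairs $\mathfrak{C}_{1},\ldots,\mathfrak{C}_{4}$ directly from Proposition \ref{pro3.3} combined with the two basic identities
$$\text{cotorsion pair generated by }\mathcal{C}=({}^{\perp}(\mathcal{C}^{\perp}),\,\mathcal{C}^{\perp}),\qquad\text{cotorsion pair cogenerated by }\mathcal{C}=({}^{\perp}\mathcal{C},\,({}^{\perp}\mathcal{C})^{\perp}).$$
Because $(\mathcal{A}_{A},\mathcal{B}_{A})$ is generated by $\mathcal{M}_{A}$ and cogenerated by $\mathcal{N}_{A}$, one has $\mathcal{B}_{A}=\mathcal{M}_{A}^{\perp}$ and $\mathcal{A}_{A}={}^{\perp}\mathcal{N}_{A}$; similarly $\mathcal{D}_{B}=\mathcal{U}_{B}^{\perp}$ and $\mathcal{C}_{B}={}^{\perp}\mathcal{V}_{B}$. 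These are the only facts beyond Proposition \ref{pro3.3} that the argument needs.

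For part $(a)$, I would first apply Proposition \ref{pro3.3}$(a)$ with $(\mathcal{C}_{A},\mathcal{D}_{B})$ taken to be $(\mathcal{M}_{A},\mathcal{U}_{B})$; the $\mathrm{Tor}$-vanishing hypothesis in the theorem is exactly what that proposition requires, so
$$\mathcal{S}_{1}(\mathcal{M}_{A},\mathcal{U}_{B})^{\perp}=\mathrm{Rep}(\mathcal{M}_{A}^{\perp},\mathcal{U}_{B}^{\perp})=\mathrm{Rep}(\mathcal{B}_{A},\mathcal{D}_{B}),$$
and unwinding the definition of ``generated'' gives $\mathfrak{C}_{1}$. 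For the second half of $(a)$, apply Proposition \ref{pro3.3}$(c)$ to $\mathcal{S}(\mathcal{N}_{A},\mathcal{V}_{B})$; the inclusion $\mathcal{I}_{A}\subseteq\mathcal{N}_{A}$ supplies its hypothesis, and one obtains
$${}^{\perp}\mathcal{S}(\mathcal{N}_{A},\mathcal{V}_{B})=\Phi({}^{\perp}\mathcal{N}_{A},{}^{\perp}\mathcal{V}_{B})=\Phi(\mathcal{A}_{A},\mathcal{C}_{B}),$$
so the cotorsion pair cogenerated by $\mathcal{S}(\mathcal{N}_{A},\mathcal{V}_{B})$ is exactly $\mathfrak{C}_{2}$.

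Part $(b)$ is entirely symmetric. Applying Proposition \ref{pro3.3}$(b)$ to $\mathcal{S}_{2}(\mathcal{N}_{A},\mathcal{V}_{B})$ (with the $\mathrm{Ext}_{A}^{1}(M,-)$-vanishing hypothesis) yields ${}^{\perp}\mathcal{S}_{2}(\mathcal{N}_{A},\mathcal{V}_{B})=\mathrm{Rep}(\mathcal{A}_{A},\mathcal{C}_{B})$, and the definition of ``cogenerated'' produces $\mathfrak{C}_{3}$; applying Proposition \ref{pro3.3}$(d)$ to $\mathcal{S}(\mathcal{M}_{A},\mathcal{U}_{B})$ (with $\mathcal{P}_{B}\subseteq\mathcal{U}_{B}$) yields $\mathcal{S}(\mathcal{M}_{A},\mathcal{U}_{B})^{\perp}=\Psi(\mathcal{B}_{A},\mathcal{D}_{B})$, and the definition of ``generated'' produces $\mathfrak{C}_{4}$.

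The step I expect to need the most care is not any computation but a bookkeeping one: matching the auxiliary vanishing hypotheses listed in the theorem against the corresponding hypotheses on the ``other'' class in Proposition \ref{pro3.3}. Specifically, the $\mathrm{Tor}_{1}^{B}(M,-)$-vanishing is imposed on the generating class $\mathcal{U}_{B}$ (not on all of $\mathcal{D}_{B}$), the $\mathrm{Ext}_{A}^{1}(M,-)$-vanishing on the cogenerating class $\mathcal{N}_{A}$, and the inclusions $\mathcal{I}_{A}\subseteq\mathcal{N}_{A}$ and $\mathcal{P}_{B}\subseteq\mathcal{U}_{B}$ are precisely what legitimizes invoking parts $(c)$ and $(d)$ of the proposition on the ``$\mathcal{S}$'' set (rather than on $\mathcal{S}_{1}$ or $\mathcal{S}_{2}$). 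Once these dictionary correspondences are checked, the four identifications of $\mathfrak{C}_{i}$ are formal consequences of Proposition \ref{pro3.3} and require no further argument.
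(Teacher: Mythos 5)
Your proposal is correct and follows exactly the same route as the paper, which simply cites Proposition~\ref{pro3.3}$(a)$, $(c)$ for part $(a)$ and Proposition~\ref{pro3.3}$(b)$, $(d)$ for part $(b)$; you have merely made explicit the substitutions $\mathcal{C}_{A}\mapsto\mathcal{M}_{A}$ or $\mathcal{N}_{A}$, $\mathcal{D}_{B}\mapsto\mathcal{U}_{B}$ or $\mathcal{V}_{B}$, together with the identities $\mathcal{M}_{A}^{\perp}=\mathcal{B}_{A}$, ${}^{\perp}\mathcal{N}_{A}=\mathcal{A}_{A}$, $\mathcal{U}_{B}^{\perp}=\mathcal{D}_{B}$, ${}^{\perp}\mathcal{V}_{B}=\mathcal{C}_{B}$, that the paper leaves implicit. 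The bookkeeping you flagged as the delicate point is indeed the whole content of the proof, and you have handled it correctly.
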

\begin{proof} Part $(a)$ follows from Proposition \ref{pro3.3} $(a),~(c)$, and $(b)$ follows from Proposition \ref{pro3.3} $(b),~(d)$.
\end{proof}
\begin{remark}\label{rem3.5} Note that if $\mathcal{M}_{A}$ and $\mathcal{U}_{B}$ are sets, then so are $\mathcal{S}_{1}(\mathcal{M}_{A},\mathcal{U}_{B})$ and $\mathcal{S}(\mathcal{M}_{A},\mathcal{U}_{B})$. Thus, if each of the cotorsion pairs $(\mathcal{A}_{A},\mathcal{B}_{A})$ and $(\mathcal{C}_{B},\mathcal{D}_{B}) $ is generated by a set, then so are $(^{\perp}\mathrm{Rep}(\mathcal{B}_{A},\mathcal{D}_{B}),
\mathrm{Rep}(\mathcal{B}_{A},\mathcal{D}_{B}))$ and $(^{\perp}\Psi(\mathcal{B}_{A},\mathcal{D}_{B}),\Psi(\mathcal{B}_{A},\mathcal{D}_{B}))$.
\end{remark}

Suppose that $(\mathcal{A}_{A},\mathcal{B}_{A})$ and $(\mathcal{C}_{B},\mathcal{D}_{B})$ are hereditary, it is natural to ask if the induced cotorsion pairs in Theorem \ref{the3.4} have the same property.

\begin{proposition}\label{pro3.6}Let $T=\left(\begin{matrix}  A & M \\  0 & B \\\end{matrix}\right)$ be an upper triangular matrix ring. Adopt the notations and assumptions from Theorem \ref{the3.4}.
If the cotorsion pairs $(\mathcal{A}_{A},\mathcal{B}_{A})$ and $(\mathcal{C}_{B},\mathcal{D}_{B}) $ are hereditary, then so are cotorsion pairs $\mathfrak{C}_{1}$ and $\mathfrak{C}_{3}$. Furthermore,

(1)  $\mathrm{Tor}_{1}^{B}(M,E)=0$ for any $E\in \mathcal{C}_{B}$ if and only if the cotorsion pair $\mathfrak{C}_{2}$ is hereditary.

(2) $\mathrm{Ext}_{A}^{1}(M,F)=0$ for any $F\in \mathcal{B}_{A}$ if and only if the cotorsion pair $\mathfrak{C}_{4}$ is hereditary.

\end{proposition}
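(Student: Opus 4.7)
The overall strategy uses the fact (noted at the end of Section 2) that a sequence of $T$-modules is exact iff both component sequences of $A$- and $B$-modules are exact, combined with the Snake Lemma applied to the ladder built from the structure maps $\phi$ or $\psi$. Recall also that a complete cotorsion pair is hereditary iff its right-hand class is coresolving, iff its left-hand class is resolving.

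For $\mathfrak{C}_1$ and $\mathfrak{C}_3$, the arguments are purely componentwise. Given a short exact sequence $0 \to X' \to X \to X'' \to 0$ in $T$-$\mathrm{Mod}$ with $X', X \in \mathrm{Rep}(\mathcal{B}_A, \mathcal{D}_B)$, the component short exact sequences and the coresolving properties of $\mathcal{B}_A$ and $\mathcal{D}_B$ force $X_1'' \in \mathcal{B}_A$ and $X_2'' \in \mathcal{D}_B$, so $X'' \in \mathrm{Rep}(\mathcal{B}_A, \mathcal{D}_B)$; this gives that $\mathfrak{C}_1$ is hereditary. A dual componentwise argument handles $\mathfrak{C}_3$.

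For the forward direction of (1), I would verify that $\Phi(\mathcal{A}_A, \mathcal{C}_B)$ is resolving. Given $0 \to X' \to X \to X'' \to 0$ with $X, X'' \in \Phi(\mathcal{A}_A, \mathcal{C}_B)$, the hypothesis $\mathrm{Tor}_1^B(M, X_2'') = 0$ makes the top row of the natural ladder obtained by tensoring the $B$-component sequence with $M$ short exact; the Snake Lemma then gives $\ker \phi^{X'} = 0$ (since $\phi^X$ and $\phi^{X''}$ are mono) together with a short exact sequence $0 \to \mathrm{coker}\,\phi^{X'} \to \mathrm{coker}\,\phi^X \to \mathrm{coker}\,\phi^{X''} \to 0$, and the resolving property of $\mathcal{A}_A$ puts $\mathrm{coker}\,\phi^{X'} \in \mathcal{A}_A$. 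The forward direction of (2) is dual, using $\mathrm{Hom}_A(M, -)$ and the vanishing $\mathrm{Ext}^1_A(M, Y_1') = 0$ for $Y_1' \in \mathcal{B}_A$.

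For the converse of (1), I would argue contrapositively by exhibiting a witness. Given $E \in \mathcal{C}_B$ with $\mathrm{Tor}_1^B(M, E) \neq 0$, pick a short exact sequence $0 \to K \to P \to E \to 0$ in $B$-$\mathrm{Mod}$ with $P$ projective (so $K \in \mathcal{C}_B$ by the resolving property). Applying $j_!$ and taking the kernel in $T$-$\mathrm{Mod}$ gives $0 \to Q \to j_!P \to j_!E \to 0$ with $j_!P, j_!E \in \Phi(\mathcal{A}_A, \mathcal{C}_B)$; one identifies $Q_2 = K$, $Q_1 = \mathrm{im}(M \otimes_B K \to M \otimes_B P)$, and the induced structure map $\phi^Q$ as the surjection $M \otimes_B K \twoheadrightarrow Q_1$, whose kernel is exactly $\mathrm{Tor}_1^B(M, E) \neq 0$. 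Thus $Q \notin \Phi(\mathcal{A}_A, \mathcal{C}_B)$, violating the resolving property. The converse of (2) is dual: start from $0 \to F \to I \to C \to 0$ with $I$ injective in $A$-$\mathrm{Mod}$, apply $i_?$, take the cokernel $Q$ in $T$-$\mathrm{Mod}$, and identify $\psi^Q$ with the inclusion of $\mathrm{Hom}_A(M, I)/\mathrm{Hom}_A(M, F) = \mathrm{im}(\mathrm{Hom}_A(M, I) \to \mathrm{Hom}_A(M, C))$ into $\mathrm{Hom}_A(M, C)$, whose cokernel is $\mathrm{Ext}_A^1(M, F)$. The main technical care sits in these converses: correctly reading off the induced structure map on the kernel (or cokernel) in $T$-$\mathrm{Mod}$ from the long exact $\mathrm{Tor}$ or $\mathrm{Ext}$ sequence is what pinpoints the obstruction.
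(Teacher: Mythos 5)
Your proof is correct and follows essentially the same route as the paper's: componentwise arguments for $\mathfrak{C}_1$, $\mathfrak{C}_3$, the tensor-ladder/Snake-Lemma computation for the forward direction of (1), and a projective resolution of $j_!E$ together with the long exact $\mathrm{Tor}$-sequence for the converse (the paper phrases this directly by showing $1\otimes g$ is a monomorphism, while you phrase it contrapositively by exhibiting the kernel $Q$ as a witness, but the underlying construction is the same). The dual argument for (2) likewise matches the paper's.
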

\begin{proof}

By \cite[Lemma 5.24]{tr06}, we only need to show that if $\mathcal{A}_{A}$ and $\mathcal{C}_{B}$ are resolving, then so are $\mathrm{Rep}(\mathcal{A}_{A},\mathcal{C}_{B})$ and $\Phi(\mathcal{A}_{A},\mathcal{C}_{B})$, and if $\mathcal{B}_{A}$ and $\mathcal{D}_{B}$ are coresolving, then so are $\mathrm{Rep}(\mathcal{B}_{A},\mathcal{D}_{B})$ and $\Psi(\mathcal{B}_{A},\mathcal{D}_{B})$.

It is clear that cotorsion pairs $\mathfrak{C}_{1}$ and $\mathfrak{C}_{3}$ are hereditary.

(1) Suppose that $\mathrm{Tor}_{1}^{B}(M_{B},E)=0$ for any $E\in \mathcal{C}_{B}$. To see $\Phi(\mathcal{A}_{A},\mathcal{C}_{B})$ is resolving, note that $\Phi(\mathcal{A}_{A},\mathcal{C}_{B})$ is closed under extensions and contains all projective modules in $T$-Mod since $\Phi(\mathcal{A}_{A},\mathcal{C}_{B})$ is the left half of a cotorsion pair. It remains to see that if $0\rightarrow X\rightarrow Y\rightarrow Z\rightarrow0$ is a short exact sequence in $T$-Mod with $Y=\binom{Y_{1}}{Y_{2}}_{\phi^{Y}},~Z=\binom{Z_{1}}{Z_{2}}_{\phi^{Z}}\in\Phi(\mathcal{A}_{A},\mathcal{C}_{B})$, then one also has $X=\binom{X_{1}}{X_{2}}_{\phi^{X}}\in\Phi(\mathcal{A}_{A},\mathcal{C}_{B})$. To this end, consider the following commutative diagram
$$\xymatrix{
  & M\otimes _{B}X_{2} \ar[d]_{\phi^{X}} \ar[r]^{} &M\otimes _{B}Y_{2} \ar[d]_{\phi^{Y}} \ar[r]^{} & M\otimes _{B}Z_{2} \ar[d]_{\phi^{Z}} \ar[r]^{} & 0\\
  0 \ar[r]^{} & X_{1} \ar[r]^{} & Y_{1} \ar[r]^{} & Z_{1} \ar[r]^{} & 0.   }$$
By assumption, $\mathrm{Tor}_{1}^{B}(M_{B},Z_{2})=0$, then $\phi^{X}$ is a monomorphism. From the Snake Lemma and the assumption, it now follows that coker$\phi^{X}$ is in $\mathcal{A}_{A}$. Since $\mathcal{C}_{B}$ is resolving, $X_{2}\in\mathcal{C}_{B}$. We conclude that $X\in \Phi(\mathcal{A}_{A},\mathcal{C}_{B})$.

Conversely, assume that $(\Phi(\mathcal{A}_{A},\mathcal{C}_{B}),\Phi(\mathcal{A}_{A},\mathcal{C}_{B})^{\perp})$ is hereditary. Then for each $E\in \mathcal{C}_{B}$, there is a $T$-module $\binom{M\otimes_{B}E}{E}\in \Phi(\mathcal{A}_{A},\mathcal{C}_{B})$. Consider the following exact sequence
$$0\longrightarrow\left(\begin{matrix}  K_{1}  \\   K_{2} \\\end{matrix}\right)_{\phi^{K}}\stackrel{\binom{f}{g}}\longrightarrow\left(\begin{matrix}  P_{1}  \\   P_{2} \\\end{matrix}\right)_{\phi^{P}}\longrightarrow\left(\begin{matrix}  M\otimes_{B}E  \\   E \\\end{matrix}\right)_{\mathrm{Id}}\longrightarrow0,$$
 where $\binom{P_{1}}{P_{2}}\in \Phi(\mathcal{A}_{A},\mathcal{C}_{B})$ is a projective $T$-module. Since $\Phi(\mathcal{A}_{A},\mathcal{C}_{B})$ is resolving, we have $\binom{K_{1}}{K_{2}}_{\phi^{K}}\in \Phi(\mathcal{A}_{A},\mathcal{C}_{B})$ and $\phi^{K}$ is monomorphic. Thus, we get a commutative diagram
$$\xymatrix{
  & M\otimes _{B}K_{2} \ar[d]_{\phi^{K}} \ar[r]^{1\otimes g} &M\otimes _{B}P_{2} \ar[d]_{\phi^{P}} \ar[r]^{} & M\otimes _{B}E \ar@{=}[d]_{} \ar[r]^{} & 0\\
  0 \ar[r]^{} & K_{1} \ar[r]^{f} & P_{1} \ar[r]^{} & M\otimes_{B}E \ar[r]^{} & 0,   }$$
where $1\otimes g$ is monomorphic. It is now very easy to check that $\mathrm{Tor}_{1}^{B}(M_{B},E)=0$, finishing the proof.

(2) The proof is dual to that of (1).
\end{proof}

\begin{proposition}\label{pro3.7}Let $T=\left(\begin{matrix}  A & M \\  0 & B \\\end{matrix}\right)$ be an upper triangular matrix ring. Adopt the notations from Theorem \ref{the3.4}.

(1) If $\mathrm{Tor}_{1}^{B}(M_{B},E)=0$ for any $E\in \mathcal{C}_{B}$, then
$$(^{\perp}\mathrm{Rep}(\mathcal{B}_{A},\mathcal{D}_{B}),\mathrm{Rep}(\mathcal{B}_{A},\mathcal{D}_{B}))=(\Phi(\mathcal{A}_{A},\mathcal{C}_{B}),\Phi(\mathcal{A}_{A},\mathcal{C}_{B})^{\perp}).$$

(2) If $\mathrm{Ext}_{A}^{1}(_{A}M,F)=0$ for any $F\in \mathcal{B}_{A}$, then
$$(\mathrm{Rep}(\mathcal{A}_{A},\mathcal{C}_{B}),\mathrm{Rep}(\mathcal{A}_{A},\mathcal{C}_{B})^{\perp})=(^{\perp}\Psi(\mathcal{B}_{A},\mathcal{D}_{B}),\Psi(\mathcal{B}_{A},\mathcal{D}_{B})).$$

\end{proposition}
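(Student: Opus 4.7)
The plan for part (1) is to prove equality of the two cotorsion pairs by showing $\Phi(\mathcal{A}_{A},\mathcal{C}_{B}) = {}^{\perp}\mathrm{Rep}(\mathcal{B}_{A},\mathcal{D}_{B})$; once these left-hand classes coincide, the right-hand classes automatically do, since a cotorsion pair is determined by either half.

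The easier inclusion ${}^{\perp}\mathrm{Rep}(\mathcal{B}_{A},\mathcal{D}_{B}) \subseteq \Phi(\mathcal{A}_{A},\mathcal{C}_{B})$ is a formal consequence of Theorem \ref{the3.4}. The cotorsion pair $\mathfrak{C}_{1}$ is generated by $\mathcal{S}_{1}(\mathcal{M}_{A},\mathcal{U}_{B})$, and I would verify directly that this generating class sits inside $\Phi(\mathcal{A}_{A},\mathcal{C}_{B})$: for $L\in\mathcal{M}_{A}\subseteq\mathcal{A}_{A}$ the module $i_{\ast}L=\binom{L}{0}$ has zero structure map (vacuously monic) with cokernel $L\in\mathcal{A}_{A}$, while for $Y\in\mathcal{U}_{B}\subseteq\mathcal{C}_{B}$ the module $j_{!}Y=\binom{M\otimes_{B}Y}{Y}_{\mathrm{id}}$ has identity structure map with trivial cokernel. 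Taking right $\perp$'s and invoking Proposition \ref{pro3.3}(a) gives $\Phi(\mathcal{A}_{A},\mathcal{C}_{B})^{\perp} \subseteq \mathrm{Rep}(\mathcal{B}_{A},\mathcal{D}_{B})$, equivalently ${}^{\perp}\mathrm{Rep}(\mathcal{B}_{A},\mathcal{D}_{B}) \subseteq \Phi(\mathcal{A}_{A},\mathcal{C}_{B})$.

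For the reverse inclusion I would take $X=\binom{X_{1}}{X_{2}}_{\phi^{X}}\in\Phi(\mathcal{A}_{A},\mathcal{C}_{B})$ and any $N=\binom{N_{1}}{N_{2}}_{\phi^{N}}\in\mathrm{Rep}(\mathcal{B}_{A},\mathcal{D}_{B})$, and apply $\mathrm{Hom}_{T}(-,N)$ to the canonical short exact sequence
$$0\longrightarrow j_{!}(X_{2})\xrightarrow{\binom{\phi^{X}}{\mathrm{id}_{X_{2}}}} X\longrightarrow i_{\ast}(\mathrm{coker}\,\phi^{X})\longrightarrow 0,$$
whose exactness is a componentwise check that uses monicity of $\phi^{X}$. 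The resulting long exact sequence sandwiches $\mathrm{Ext}_{T}^{1}(X,N)$ between $\mathrm{Ext}_{T}^{1}(i_{\ast}(\mathrm{coker}\,\phi^{X}),N)$ and $\mathrm{Ext}_{T}^{1}(j_{!}(X_{2}),N)$. Lemma \ref{lem3.2}(1) identifies the first with $\mathrm{Ext}_{A}^{1}(\mathrm{coker}\,\phi^{X},N_{1})=0$, since $\mathrm{coker}\,\phi^{X}\in\mathcal{A}_{A}$ and $N_{1}\in\mathcal{B}_{A}$; and the hypothesis $\mathrm{Tor}_{1}^{B}(M,X_{2})=0$ (available because $X_{2}\in\mathcal{C}_{B}$) enables Lemma \ref{lem3.2}(5), which identifies the second with $\mathrm{Ext}_{B}^{1}(X_{2},N_{2})=0$. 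Hence $\mathrm{Ext}_{T}^{1}(X,N)=0$, so $\Phi(\mathcal{A}_{A},\mathcal{C}_{B}) \subseteq {}^{\perp}\mathrm{Rep}(\mathcal{B}_{A},\mathcal{D}_{B})$.

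Part (2) I would handle by the dual argument using the short exact sequence
$$0\longrightarrow j_{\ast}(\ker\psi^{N})\longrightarrow N\xrightarrow{\binom{\mathrm{id}_{N_{1}}}{\psi^{N}}} i_{?}(N_{1})\longrightarrow 0$$
for $N=\binom{N_{1}}{N_{2}}_{\psi^{N}}\in\Psi(\mathcal{B}_{A},\mathcal{D}_{B})$; here the Ext-vanishing hypothesis $\mathrm{Ext}_{A}^{1}(M,N_{1})=0$, supplied by $N_{1}\in\mathcal{B}_{A}$, licenses Lemma \ref{lem3.2}(6), while Lemma \ref{lem3.2}(2) handles the other flank. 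The corresponding easy inclusion comes from $\mathcal{S}_{2}(\mathcal{N}_{A},\mathcal{V}_{B})\subseteq\Psi(\mathcal{B}_{A},\mathcal{D}_{B})$ together with Proposition \ref{pro3.3}(b). The only delicate point in either part is picking a short exact sequence whose two flanking $\mathrm{Ext}$-terms both simplify under Lemma \ref{lem3.2} to the given orthogonality in $A$-Mod and $B$-Mod; once the right sequence is in hand, both directions are routine.
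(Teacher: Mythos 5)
Your proof is correct and reaches the same conclusion as the paper, but the key step takes a genuinely different (dual) route. Both you and the paper reduce everything to Theorem \ref{the3.4}, Proposition \ref{pro3.3}, and Lemma \ref{lem3.2}, and both observe that once the cotorsion-pair structures $\mathfrak{C}_{1}$ and $\mathfrak{C}_{2}$ are in hand, it suffices to compare one side. The difference is in which side you attack and which short exact sequence you use. To show $\Phi(\mathcal{A}_{A},\mathcal{C}_{B})\subseteq{}^{\perp}\mathrm{Rep}(\mathcal{B}_{A},\mathcal{D}_{B})$ you decompose an object $X\in\Phi(\mathcal{A}_{A},\mathcal{C}_{B})$ as $0\to j_{!}(X_{2})\to X\to i_{\ast}(\mathrm{coker}\,\phi^{X})\to0$ (which exists precisely because $\phi^{X}$ is monic) and then compute $\mathrm{Ext}^{1}_{T}(X,N)=0$ directly from the long exact sequence, using Lemma \ref{lem3.2}(1) and (5); the $\mathrm{Tor}$-vanishing hypothesis is exactly what makes Lemma \ref{lem3.2}(5) apply to $X_{2}\in\mathcal{C}_{B}$. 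The paper instead proves the equivalent containment $\mathrm{Rep}(\mathcal{B}_{A},\mathcal{D}_{B})\subseteq\Phi(\mathcal{A}_{A},\mathcal{C}_{B})^{\perp}$ by decomposing an object $X\in\mathrm{Rep}(\mathcal{B}_{A},\mathcal{D}_{B})$ as $0\to i_{\ast}(X_{1})\to X\to j_{\ast}(X_{2})\to0$, placing both outer terms in $\mathcal{S}(\mathcal{B}_{A},\mathcal{D}_{B})\subseteq\Phi(\mathcal{A}_{A},\mathcal{C}_{B})^{\perp}$, and invoking closure of $\Phi(\mathcal{A}_{A},\mathcal{C}_{B})^{\perp}$ under extensions. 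Your version is arguably slightly more hands-on (explicit Ext computation, explicit use of monicity of $\phi^{X}$); the paper's version is slightly more structural (it never needs Lemma \ref{lem3.2}(5) in this step, leaning on the extension-closed property of the right orthogonal instead). Your treatment of the ``easy'' inclusion via $\mathcal{S}_{1}(\mathcal{M}_{A},\mathcal{U}_{B})\subseteq\Phi(\mathcal{A}_{A},\mathcal{C}_{B})$ is also a touch more direct than the paper's chain through ${}^{\perp}i_{\ast}(\mathcal{B}_{A})\cap{}^{\perp}j_{\ast}(\mathcal{D}_{B})$. Part (2) dualizes cleanly as you describe, with Lemma \ref{lem3.2}(2) and (6) and the sequence $0\to j_{\ast}(\ker\psi^{N})\to N\to i_{?}(N_{1})\to0$.
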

\begin{proof} We just prove (1) since (2) follows by duality. From Theorem \ref{the3.4} we have
$$\mathrm{Rep}(\mathcal{B}_{A},\mathcal{D}_{B})=\mathcal{S}_{1}(\mathcal{A}_{A},\mathcal{C}_{B})^{\perp}~~\mathrm{and}~~\Phi(\mathcal{A}_{A},\mathcal{C}_{B})={^{\perp}\mathcal{S}(\mathcal{B}_{A},\mathcal{D}_{B})}$$
and it must be shown that $\mathrm{Rep}(\mathcal{B}_{A},\mathcal{D}_{B})=\Phi(\mathcal{A}_{A},\mathcal{C}_{B})^{\perp}$. For all objects $X\in\mathcal{A}_{A}$, $X'\in\mathcal{C}_{B}$ and $Y\in\mathcal{B}_{A}$, we have
$\mathrm{Ext}^{1}_{T}(i_{\ast}(X),i_{\ast}(Y))\cong\mathrm{Ext}^{1}_{A}(X,i^{!}i_{\ast}(Y))\cong\mathrm{Ext}^{1}_{A}(X,Y)=0$ and $\mathrm{Ext}^{1}_{T}(j_{!}(X'),i_{\ast}(Y))\cong\mathrm{Ext}^{1}_{A}(X',j^{\ast}i_{\ast}(Y))=0$. Thus there is  the inclusion $\mathcal{S}_{1}(\mathcal{A}_{A},\mathcal{C}_{B})=i_{\ast}(\mathcal{A}_{A})\cup j_{!}(\mathcal{C}_{B})\subseteq {^{\perp}i_{\ast}(\mathcal{B}_{A})}$. For all objects $Y'\in\mathcal{D}_{B}$, we have $\mathrm{Ext}^{1}_{T}(i_{\ast}(X),j_{\ast}(Y'))\cong\mathrm{Ext}^{1}_{A}(X,i^{!}j_{\ast}(Y'))=0$
and $\mathrm{Ext}^{1}_{T}(j_{!}(X'),j_{\ast}(Y'))\cong\mathrm{Ext}^{1}_{B}(X',j^{\ast}j_{\ast}(Y'))\cong\mathrm{Ext}^{1}_{B}(X',Y')=0$. So we have the inclusion $\mathcal{S}_{1}(\mathcal{A}_{A},\mathcal{C}_{B})\subseteq {^{\perp}j_{\ast}(\mathcal{D}_{B})}$. Consequently, $\mathrm{Rep}(\mathcal{B}_{A},\mathcal{D}_{B})={\mathcal{S}_{1}(\mathcal{A}_{A},\mathcal{C}_{B})^{\perp}}
\supseteq(^{\perp}i_{\ast}(\mathcal{B}_{A})\cap{^{\perp}j_{\ast}(\mathcal{D}_{B})})^{\perp}=
(^{\perp}(\mathcal{S}(\mathcal{B}_{A},\mathcal{D}_{B})))^{\perp}=\Phi(\mathcal{A}_{A},\mathcal{C}_{B})^{\perp}$.

To show the opposite inclusion, it suffices to argue that every $X=\binom{X_{1}}{X_{2}}\in \mathrm{Rep}(\mathcal{B}_{A},\mathcal{D}_{B})$ is in $\mathcal{S}(\mathcal{B}_{A},\mathcal{D}_{B})$. Consider the short exact sequence
$$0\longrightarrow\left(\begin{matrix}  X_{1} \\   0 \\\end{matrix}\right)_{}\longrightarrow\left(\begin{matrix}   X_{1}  \\   X_{2} \\\end{matrix}\right)_{}\longrightarrow\left(\begin{matrix}  0  \\   X_{2} \\\end{matrix}\right)_{}\longrightarrow0.$$
Since $\binom{X_{1}}{0}=i_{\ast}(X_{1})\subseteq i_{\ast}(\mathcal{B}_{A})\subseteq\mathcal{S}(\mathcal{B}_{A},\mathcal{D}_{B})\subseteq(^{\perp}(\mathcal{S}(\mathcal{B}_{A},\mathcal{D}_{B})))^{\perp}=\Phi(\mathcal{A}_{A},\mathcal{C}_{B})^{\perp}$ and $\binom{0}{X_{2}}=j_{\ast}(X_{2})\subseteq\mathcal{S}(\mathcal{B}_{A},\mathcal{D}_{B})
\subseteq\Phi(\mathcal{A}_{A},\mathcal{C}_{B})^{\perp}$, we get that $\binom{X_{1}}{X_{2}}\subseteq\Phi(\mathcal{A}_{A},\mathcal{C}_{B})^{\perp}$ since $\Phi(\mathcal{A}_{A},\mathcal{C}_{B})^{\perp}$ is closed under extensions. Hence  $\mathrm{Rep}(\mathcal{B}_{A},\mathcal{D}_{B})=\Phi(\mathcal{A}_{A},\mathcal{C}_{B})^{\perp}$.
\end{proof}

The following is the main result of this section which provides a way to construct hereditary complete cotorsion pairs in $T$-Mod.

\begin{theorem}\label{the3.8}Let $A$ and $B$ be two rings and $T=\left(\begin{matrix}  A & M \\  0 & B \\\end{matrix}\right)$ with $M$ an $A$-$B$-bimodule, and let $\mathcal{A},~\mathcal{B}$, $\mathcal{C}$ and $\mathcal{D}$ be classes of modules. If $\mathrm{Tor}_{1}^{B}(M,E)=0$ for any $E\in \mathcal{C}_{B}$ and $\mathrm{Ext}_{A}^{1}(M,F)=0$ for any $F\in \mathcal{B}_{A}$,  then the following are equivalent:

(1) $(\mathcal{A}_{A},\mathcal{B}_{A})$ and $(\mathcal{C}_{B},\mathcal{D}_{B})$ are hereditary cotorsion pairs each generated by a set.

(2) $(\Phi(\mathcal{A}_{A},\mathcal{C}_{B}), \mathrm{Rep}(\mathcal{B}_{A},\mathcal{D}_{B}))$ and $(\mathrm{Rep}(\mathcal{A}_{A},\mathcal{C}_{B}),\Psi(\mathcal{B}_{A},\mathcal{D}_{B}))$ are hereditary cotorsion pairs each generated by a set.
\end{theorem}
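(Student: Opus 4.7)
The plan is to handle the two implications separately. For $(1)\Rightarrow(2)$, the conclusion is an assembly of the preceding results under the stated Tor/Ext-vanishing hypotheses: Theorem \ref{the3.4}(a) together with Remark \ref{rem3.5} produces the cotorsion pair $(^{\perp}\mathrm{Rep}(\mathcal{B}_A,\mathcal{D}_B),\mathrm{Rep}(\mathcal{B}_A,\mathcal{D}_B))$ generated by the set $\mathcal{S}_1(\mathcal{A}_A,\mathcal{C}_B)$, Proposition \ref{pro3.7}(1) rewrites its left-hand class as $\Phi(\mathcal{A}_A,\mathcal{C}_B)$, and Proposition \ref{pro3.6}(1) gives hereditariness; the second cotorsion pair is produced dually using Theorem \ref{the3.4}(b), Proposition \ref{pro3.7}(2), and Proposition \ref{pro3.6}(2).

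The substantive direction is $(2)\Rightarrow(1)$. The strategy is to pull the $T$-level cotorsion data back along the exact fully faithful embeddings $i_{\ast}$ and $j_{\ast}$, using the $\mathrm{Ext}$-isomorphisms of Lemma \ref{lem3.2}. To show $(\mathcal{A}_A,\mathcal{B}_A)$ is a cotorsion pair, first note that $0\in\mathcal{C}_B\cap\mathcal{D}_B$ since the zero $T$-module lies in every class appearing in (2), so that $i_{\ast}(\mathcal{A}_A)\subseteq\Phi(\mathcal{A}_A,\mathcal{C}_B)$ and $i_{\ast}(\mathcal{B}_A)\subseteq\mathrm{Rep}(\mathcal{B}_A,\mathcal{D}_B)$; Lemma \ref{lem3.2}(1) then yields $\mathcal{A}_A\subseteq{^{\perp}\mathcal{B}_A}$. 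Conversely, for $X\in{^{\perp}\mathcal{B}_A}$, Lemma \ref{lem3.2}(1) forces $i_{\ast}(X)\in{^{\perp}\mathrm{Rep}(\mathcal{B}_A,\mathcal{D}_B)}=\Phi(\mathcal{A}_A,\mathcal{C}_B)$, whence $X=\mathrm{coker}(\phi^{i_{\ast}(X)})\in\mathcal{A}_A$; for $X\in\mathcal{A}_A^{\perp}$, since every $Y\in\Phi(\mathcal{A}_A,\mathcal{C}_B)$ has $\phi^{Y}$ monomorphic, Lemma \ref{lem3.2}(3) yields $i_{\ast}(X)\in\Phi(\mathcal{A}_A,\mathcal{C}_B)^{\perp}=\mathrm{Rep}(\mathcal{B}_A,\mathcal{D}_B)$, so $X\in\mathcal{B}_A$. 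The pair $(\mathcal{C}_B,\mathcal{D}_B)$ is established symmetrically via $j_{\ast}$ and Lemma \ref{lem3.2}(2),(4), using that $\psi^{X}$ is epimorphic for every $X\in\Psi(\mathcal{B}_A,\mathcal{D}_B)$.

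Hereditariness is transferred by the exactness of $i_{\ast}$ and $j_{\ast}$: the resolving closure of $\Phi(\mathcal{A}_A,\mathcal{C}_B)$ and $\mathrm{Rep}(\mathcal{A}_A,\mathcal{C}_B)$ descends to $\mathcal{A}_A$ and $\mathcal{C}_B$, with containment of projectives using that $i_{\ast}$ and $j_{!}$ preserve projective modules; dually, $i_{?}$ and $j_{\ast}$ preserve injective modules, handling coresolvingness of $\mathcal{B}_A$ and $\mathcal{D}_B$. Finally, if $\mathcal{S}_T$ is a set generating the $T$-level pair $(\Phi(\mathcal{A}_A,\mathcal{C}_B),\mathrm{Rep}(\mathcal{B}_A,\mathcal{D}_B))$, then the sets $\{\mathrm{coker}(\phi^S):S\in\mathcal{S}_T\}\subseteq\mathcal{A}_A$ and $\{S_2:S\in\mathcal{S}_T\}\subseteq\mathcal{C}_B$ generate $(\mathcal{A}_A,\mathcal{B}_A)$ and $(\mathcal{C}_B,\mathcal{D}_B)$ respectively; both verifications reduce, via Lemma \ref{lem3.2}(3) and (2) after evaluating at $i_{\ast}$ or $j_{\ast}$, to the defining identity $\mathcal{S}_T^{\perp}=\mathrm{Rep}(\mathcal{B}_A,\mathcal{D}_B)$. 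The main obstacle I anticipate is the bookkeeping of the monomorphic/epimorphic side-conditions in Lemma \ref{lem3.2}; fortunately these are built into the definitions of $\Phi$ and $\Psi$, so once tracked at each invocation the rest is a formal manipulation of adjunctions.
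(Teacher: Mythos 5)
Your proposal is correct and follows essentially the same route as the paper's: direction $(1)\Rightarrow(2)$ is an assembly of Theorem \ref{the3.4}, Remark \ref{rem3.5}, Proposition \ref{pro3.7}, and Proposition \ref{pro3.6}, while $(2)\Rightarrow(1)$ pulls back along $i_{\ast}$ and $j_{\ast}$ using Lemma \ref{lem3.2}. The only noteworthy (and harmless) variations are that you work uniformly through $i_{\ast}$ with Lemma \ref{lem3.2}(1) and (3) where the paper switches to $i_{?}$ and Lemma \ref{lem3.2}(6) for the inclusion ${}^{\perp}\mathcal{B}_A\subseteq\mathcal{A}_A$ — your version is arguably a touch cleaner and sidesteps a slightly terse step in the paper's treatment of $\mathcal{A}_A^{\perp}\subseteq\mathcal{B}_A$ — and that you extract both generating sets from the single set $\mathcal{S}_T$ rather than invoking duality for the $B$-side; the mention of $j_{!}$, $i_{?}$ for "containment of projectives/injectives" in the hereditariness paragraph is superfluous (these containments are automatic for cotorsion-pair classes, and only closure under kernels of epimorphisms needs checking), but it introduces no error.
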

\begin{proof} (1) $\Rightarrow$ (2) Suppose $(\mathcal{C}_{B},\mathcal{D}_{B})$ is generated by a set $\mathcal{U}_{B}$, we can add a projective generator $B$ such that $(\mathcal{U}_{B}\cup B)^{\perp}=\mathcal{D}_{B}$. Then $(\mathcal{C}_{B},\mathcal{D}_{B})$ is generated by a set containing a  projective generator. Therefore, Theorem \ref{the3.4} and Proposition \ref{pro3.7} show that $(\Phi(\mathcal{A}_{A},\mathcal{C}_{B}), \mathrm{Rep}(\mathcal{B}_{A},\mathcal{D}_{B}))$ and $(\mathrm{Rep}(\mathcal{A}_{A},\mathcal{C}_{B}),\Psi(\mathcal{B}_{A},\mathcal{D}_{B}))$ are cotorsion pairs in $T$-Mod.  These two cotorsion pairs are each generated by a set by Theorem \ref{the3.4} and Remark \ref{rem3.5}, and they are hereditary by Proposition \ref{pro3.6}.

(2) $\Rightarrow$ (1) First, we show $(\mathcal{A}_{A},\mathcal{B}_{A})$ is a cotorsion pair. Let $X\in\mathcal{A}_{A}, Y\in\mathcal{B}_{A}$. By Lemma \ref{lem3.2}, we have isomorphisms $\mathrm{Ext}^{1}_{A}(X,Y)=\mathrm{Ext}^{1}_{A}(X, i^{!}\binom{Y}{0})\cong\mathrm{Ext}^{1}_{T}(i_{\ast}X,\binom{Y}{0})$. Note that $i_{\ast}X\in\Phi(\mathcal{A}_{A},\mathcal{C}_{B}),~ \binom{Y}{0}\in \mathrm{Rep}(\mathcal{B}_{A},\mathcal{D}_{B})$, we get that $\mathrm{Ext}^{1}_{A}(X,Y)=0$.
Let $X\in {^{\perp}\mathcal{B}_{A}}$, i.e., $\mathrm{Ext}^{1}_{A}(X,Y)=0$ for any $Y\in \mathcal{B}_{A}$. We'll show that $X\in \mathcal{A}_{A}$. By Lemma \ref{lem3.2} again, $0=\mathrm{Ext}^{1}_{A}(X,Y)=\mathrm{Ext}^{1}_{A}(i^{!}\binom{X}{0},Y)
\cong\mathrm{Ext}^{1}_{T}(\binom{X}{0},i_{?}Y)$.
Note that $i_{?}Y=\binom{Y}{\mathrm{Hom}_{A}(M,Y)}\in \Psi(\mathcal{B}_{A},\mathcal{D}_{B})$, we get that $\binom{X}{0}\in \mathrm{Rep}(\mathcal{A}_{A},\mathcal{C}_{B})$. So $X\in \mathcal{A}_{A}$.
Let $Y\in \mathcal{A}_{A}^{\perp}$, i.e., $\mathrm{Ext}^{1}_{A}(X,Y)=0$ for any $X\in \mathcal{A}_{A}$. We'll show that $Y\in \mathcal{B}_{A}$. By Lemma \ref{lem3.2} again, $0=\mathrm{Ext}^{1}_{A}(X,Y)\cong\mathrm{Ext}^{1}_{A}(i_{\ast}X,\binom{Y}{0})$.
Note that $i_{\ast}X=\binom{X}{0}\in \Phi(\mathcal{A}_{A},\mathcal{C}_{B})$, we get $\binom{Y}{0}\in \mathrm{Rep}(\mathcal{B}_{A},\mathcal{D}_{B})$. So $Y\in \mathcal{B}_{A}$. Thus $(\mathcal{A}_{A},\mathcal{B}_{A})$ is a cotorsion pair.

Suppose $(\Phi(\mathcal{A}_{A},\mathcal{C}_{B}), \mathrm{Rep}(\mathcal{B}_{A},\mathcal{D}_{B}))$ is generated by a set $\mathcal{S'}$, then we have $\mathcal{S'}\subseteq\Phi(\mathcal{A}_{A},\mathcal{C}_{B})$ and $\mathcal{S'}^{\perp}=\mathrm{Rep}(\mathcal{B}_{A},\mathcal{D}_{B})$. By the isomorphism from Lemma \ref{lem3.2} (3), we get $i^{\ast}(\mathcal{S'})\subseteq\mathcal{A}_{A}$ and $(i^{\ast}(\mathcal{S'}))^{\perp}=\mathcal{B}_{A}$. It follows that $(\mathcal{A}_{A},\mathcal{B}_{A})$ is generated by a set.

In order to prove that $(\mathcal{A}_{A},\mathcal{B}_{A})$  is hereditary, we only need to show that $\mathcal{A}_{A}$ is closed under kernels of epimorphisms. Consider the exact sequence
$0\rightarrow X\rightarrow Y\rightarrow Z\rightarrow0$  with $Y,~Z\in \mathcal{A}_{A}$. Applying the functor $i_{\ast}$, we get an exact sequence
$0\rightarrow i_{\ast}X\rightarrow i_{\ast}Y\rightarrow i_{\ast}Z\rightarrow0$ in $T$-Mod, where $i_{\ast}Y,~i_{\ast}Z\in\Phi(\mathcal{A}_{A},\mathcal{C}_{B})$. Since $\Phi(\mathcal{A}_{A},\mathcal{C}_{B})$ is resolving, we have  $i_{\ast}X=\binom{X}{0}\in\Phi(\mathcal{A}_{A},\mathcal{C}_{B})$. Thus $X\in \mathcal{A}_{A}$.

The method used to show $(\mathcal{C}_{B},\mathcal{D}_{B})$ is a hereditary cotorsion pair  generated by a set is dual.
\end{proof}

Specializing Theorem \ref{the3.8} to the case $\mathcal{A=C}$ and $\mathcal{B=D}$, we have the following result.

\begin{corollary}\label{cor3.9}Let $A$ and $B$ be two rings and $T=\left(\begin{matrix}  A & M \\  0 & B \\\end{matrix}\right)$ with $M$ an $A$-$B$-bimodule, and let $\mathcal{A}$ and $\mathcal{B}$ be classes of modules. Suppose $(\mathcal{A},\mathcal{B})$ is a hereditary cotorsion pair generated by a set.

(1) If $\mathrm{Tor}_{1}^{B}(M,E)=0$ for any $E\in \mathcal{A}_{B}$, then
  $(\Phi(\mathcal{A}), \mathrm{Rep}(\mathcal{B}))$ is a hereditary cotorsion pair generated by a set.

(2) If $\mathrm{Ext}_{A}^{1}(M,F)=0$ for any $F\in \mathcal{B}_{A}$, then
 $(\mathrm{Rep}(\mathcal{A}),\Psi(\mathcal{B}))$ is a hereditary cotorsion pair generated by a set.
\end{corollary}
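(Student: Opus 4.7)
The plan is to deduce each of (1) and (2) by specializing Theorem~\ref{the3.4}, Proposition~\ref{pro3.6}, and Proposition~\ref{pro3.7} to the case $\mathcal{C}_B=\mathcal{A}_B$ and $\mathcal{D}_B=\mathcal{B}_B$. The single Tor-vanishing hypothesis in (1) and the single Ext-vanishing hypothesis in (2) each suffice to produce exactly one of the two desired cotorsion pairs, which is why Theorem~\ref{the3.8} (requiring both vanishings at once) cannot simply be quoted wholesale.

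For part (1), I would first choose a set $\mathcal{M}_A\subseteq\mathcal{A}$ generating $(\mathcal{A}_A,\mathcal{B}_A)$ and a set $\mathcal{U}_B\subseteq\mathcal{A}$ generating $(\mathcal{A}_B,\mathcal{B}_B)$. Since $\mathcal{U}_B\subseteq\mathcal{A}_B$, the hypothesis gives $\mathrm{Tor}_1^B(M,P)=0$ for every $P\in\mathcal{U}_B$, so the first half of Theorem~\ref{the3.4}(a) applies and exhibits $\mathfrak{C}_1=({}^{\perp}\mathrm{Rep}(\mathcal{B}),\mathrm{Rep}(\mathcal{B}))$ as the cotorsion pair in $T$-$\mathrm{Mod}$ generated by the set $\mathcal{S}_1(\mathcal{M}_A,\mathcal{U}_B)$, which is itself a set by Remark~\ref{rem3.5}. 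Proposition~\ref{pro3.7}(1), which needs the Tor vanishing on all of $\mathcal{A}_B$, then identifies ${}^{\perp}\mathrm{Rep}(\mathcal{B})$ with $\Phi(\mathcal{A})$; and since $(\mathcal{A}_A,\mathcal{B}_A)$ and $(\mathcal{A}_B,\mathcal{B}_B)$ are both hereditary by hypothesis, Proposition~\ref{pro3.6} shows $\mathfrak{C}_1=(\Phi(\mathcal{A}),\mathrm{Rep}(\mathcal{B}))$ is hereditary.

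Part (2) is strictly dual: I would first enlarge $\mathcal{U}_B$ to $\mathcal{U}_B\cup\{B\}$ (as in the proof of Theorem~\ref{the3.8}) so that $\mathcal{P}_B\subseteq\mathcal{U}_B$ while preserving $\mathcal{U}_B^{\perp}=\mathcal{B}_B$; the second half of Theorem~\ref{the3.4}(b) then produces $\mathfrak{C}_4=({}^{\perp}\Psi(\mathcal{B}),\Psi(\mathcal{B}))$ as the cotorsion pair generated by the set $\mathcal{S}(\mathcal{M}_A,\mathcal{U}_B)$. Proposition~\ref{pro3.7}(2), together with the Ext-vanishing hypothesis on $\mathcal{B}_A$, rewrites ${}^{\perp}\Psi(\mathcal{B})$ as $\mathrm{Rep}(\mathcal{A})$, and Proposition~\ref{pro3.6}(2) supplies heredity. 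I do not expect any genuine obstacle here: the real content has already been packaged into Theorem~\ref{the3.4} and Propositions~\ref{pro3.6}--\ref{pro3.7}. The only delicate point is ensuring that the resulting cotorsion pair is genuinely \emph{generated} by a set (rather than merely cogenerated by one), which for (2) is precisely what forces the projective-generator adjustment to $\mathcal{U}_B$, while for (1) it is already built into the first half of Theorem~\ref{the3.4}(a).
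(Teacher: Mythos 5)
Your proposal is correct and matches the paper's intended argument: the paper's one-sentence appeal to ``specializing Theorem~\ref{the3.8}'' is really shorthand for exactly the route you spell out, since the proof of Theorem~\ref{the3.8}(1)$\Rightarrow$(2) uses the Tor-vanishing hypothesis only to produce the pair $(\Phi(\mathcal{A}_A,\mathcal{C}_B),\mathrm{Rep}(\mathcal{B}_A,\mathcal{D}_B))$ (via Theorem~\ref{the3.4}(a), Proposition~\ref{pro3.7}(1), and the unconditional heredity of $\mathfrak{C}_1$ in Proposition~\ref{pro3.6}) and the Ext-vanishing hypothesis only for $(\mathrm{Rep}(\mathcal{A}_A,\mathcal{C}_B),\Psi(\mathcal{B}_A,\mathcal{D}_B))$, so the two halves separate cleanly. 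Your observation that the projective-generator enlargement of $\mathcal{U}_B$ is what secures generation (rather than cogeneration) by a set in part (2) is precisely the point made in the proof of Theorem~\ref{the3.8}.
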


\bigskip
\section { \bf Recollements of homotopy categories }
\leftskip0truemm \rightskip0truemm
A nice introduction to the basic idea of a model category can be found in \cite{hv99}.
We begin by recalling Hovey's correspondence. First, we need the definition of an abelian model structure.

\begin{definition}\label{def4.1}An abelian model category \cite{hv99} is a bicomplete abelian category $\mathcal{C}$ equipped with a model structure such that:

(1) a map is a cofibration if and only if it is a monomorphism with cofibrant cokernel,

(2) a map is a fibration if and only if it is an epimorphism with fibrant kernel.
\end{definition}

 Hovey  then  characterizes  abelian model categories in  terms
of cotorsion pairs. So in fact one could even take the cotorsion pairs given in the correspondence below as the definition of an abelian model category.
By  a  thick  subcategory of  an  abelian category $\mathcal{C}$ we  mean  a  class  $\mathcal{W}$ of  objects of $\mathcal{C}$
which is closed under direct summands and such that if two out of three of the terms in a short exact sequence are in $\mathcal{W}$, then so is the third.\\

\hspace{-0.4cm}\textbf{Hovey's correspondence }(see \cite[Theorem 2.2]{ho02})
\emph{ Let $\mathcal{C}$ be  an  abelian  category  with  an  abelian  model  structure.  Let $\mathcal{Q}$ be
the  class  of  cofibrant  objects, $\mathcal{R}$ the  class  of  fibrant  objects  and  $\mathcal{W}$ the  class  of  trivial
objects.  Then $\mathcal{W}$ is  a  thick  subcategory  of $\mathcal{C}$ and  both  $(\mathcal{Q}, \mathcal{W} \cap \mathcal{R})$ and  $(\mathcal{Q}\cap \mathcal{W} , \mathcal{R})$ are
complete  cotorsion  pairs  in $\mathcal{C}$. Conversely,  given  a  thick  subcategory  $\mathcal{W}$ and  classes  $\mathcal{Q}$
and  $\mathcal{R}$ making  $(\mathcal{Q}, \mathcal{W} \cap \mathcal{R})$ and  $(\mathcal{Q}\cap \mathcal{W} , \mathcal{R})$ each  a complete  cotorsion  pair,  then  there
is  an  abelian  model  structure  on  $\mathcal{C}$  where  $\mathcal{Q}$ is the class of   the  cofibrant  objects,  $\mathcal{R}$ is the class of  the  fibrant
objects  and   $\mathcal{W}$  is the class of  the  trivial  objects.
}
\\

Let  $\mathcal{Q}$, $\mathcal{W}$, and $\mathcal{R}$ be  three  classes  in  $\mathcal{C}$ as in Hovey's correspondence.
Then  we  call  $\mathcal{M}=(\mathcal{Q}, \mathcal{W} , \mathcal{R)}$ a  \emph{Hovey  triple}. By a \emph{hereditary Hovey triple} we mean that the two corresponding cotorsion pairs $(\mathcal{Q}, \mathcal{W} \cap \mathcal{R})$ and  $(\mathcal{Q}\cap \mathcal{W} , \mathcal{R})$ are hereditary. Let $W$ be the class of weak equivalences. The homotopy category of the model category is the localization $\mathcal{C}[W^{-1}]$ and is denoted by $\mathrm{Ho}(\mathcal{M})$.
 We know that if $\mathcal{M=(Q,W,R)}$ is a hereditary Hovey triple, then $\mathrm{Ho}(\mathcal{M})$ is a triangulated category and it is triangle equivalent to the stable category $(\mathcal{Q}\cap \mathcal{R})/\omega$, where $\omega:=\mathcal{Q}\cap\mathcal{W}\cap \mathcal{R}$ is the class of projective-injective objects. \\

The following lemma provides us a way to construct a Hovey triple from two cotorsion pairs.

\begin{lemma} (see \cite[Proposition 1.4.2]{be14}, \cite[Theorem 1.1]{gj14})\label{lem4.2} Let $\mathcal{C}$ be an abelian category and suppose $(\mathcal{Q},\widetilde{\mathcal{R}})$ and $(\widetilde{\mathcal{Q}},\mathcal{R})$ are complete (small), hereditary cotorsion pairs over $\mathcal{C}$ with (1) $\widetilde{\mathcal{Q}}\subseteq \mathcal{Q}$, (2) $\mathcal{Q}\cap\widetilde{\mathcal{R}}=\widetilde{\mathcal{Q}}\cap\mathcal{R}$. Then there exists a unique (cofibrantly generated) abelian model
structure $(\mathcal{Q}, \mathcal{W} , \mathcal{R})$, and its class $\mathcal{W}$ of trivial objects is given by
\begin{align*}
 \mathcal{W }&=\{ X \in\mathcal{C} \mid \exists~\text{a short exact sequence}~ 0\rightarrow X\rightarrow R\rightarrow Q\rightarrow0~with~R\in\widetilde{\mathcal{R}},~Q\in\widetilde{\mathcal{Q}} \}\\
&=\{ X \in\mathcal{C} \mid \exists~\text{a short exact sequence}~ 0\rightarrow Q'\rightarrow R'\rightarrow X\rightarrow0~with~R'\in\widetilde{\mathcal{R}},~Q'\in\widetilde{\mathcal{Q}}\}.
\end{align*}
\end{lemma}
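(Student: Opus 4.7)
The plan is to apply Hovey's correspondence. Condition (1) and the symmetry between $\widetilde{\mathcal{Q}}\subseteq\mathcal{Q}$ and $\widetilde{\mathcal{R}}\subseteq\mathcal{R}$ are already in place (the two hypothesized cotorsion pairs will eventually play the role of $(\mathcal{Q},\mathcal{W}\cap\mathcal{R})$ and $(\mathcal{Q}\cap\mathcal{W},\mathcal{R})$), so everything reduces to producing a thick subcategory $\mathcal{W}$ for which $\mathcal{W}\cap\mathcal{R}=\widetilde{\mathcal{R}}$ and $\mathcal{Q}\cap\mathcal{W}=\widetilde{\mathcal{Q}}$. Write $\mathcal{W}_{1}$, $\mathcal{W}_{2}$ for the two classes described in the statement; my plan is to show $\mathcal{W}_{1}=\mathcal{W}_{2}=:\mathcal{W}$, verify the two intersection identities, prove thickness, and then invoke Hovey's correspondence together with its cofibrant-generation refinement when both cotorsion pairs are small.

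The technical heart, and the step I expect to be the main obstacle, is the equality $\mathcal{W}_{1}=\mathcal{W}_{2}$. Given $X\in\mathcal{W}_{1}$ with a sequence $0\to X\to R\to Q\to 0$, $R\in\widetilde{\mathcal{R}}$, $Q\in\widetilde{\mathcal{Q}}$, I apply the enough-projectives property of $(\mathcal{Q},\widetilde{\mathcal{R}})$ to $R$ to obtain $0\to\widetilde{R}_{1}\to Q_{1}\to R\to 0$ with $Q_{1}\in\mathcal{Q}$ and $\widetilde{R}_{1}\in\widetilde{\mathcal{R}}$. Closure of $\widetilde{\mathcal{R}}$ under extensions (it is the right class of a cotorsion pair) forces $Q_{1}\in\widetilde{\mathcal{R}}$, and hypothesis (2) then upgrades this to $Q_{1}\in\widetilde{\mathcal{Q}}\cap\mathcal{R}$. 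Pulling back $Q_{1}\twoheadrightarrow R$ along $X\hookrightarrow R$ yields a $3\times 3$ diagram whose top row is $0\to\widetilde{R}_{1}\to P\to X\to 0$ and whose middle column is $0\to P\to Q_{1}\to Q\to 0$; since $(\widetilde{\mathcal{Q}},\mathcal{R})$ is hereditary, $\widetilde{\mathcal{Q}}$ is closed under kernels of epimorphisms, so $P\in\widetilde{\mathcal{Q}}$, and the top row exhibits $X\in\mathcal{W}_{2}$. The reverse inclusion $\mathcal{W}_{2}\subseteq\mathcal{W}_{1}$ is proved dually using the enough-injectives property of $(\widetilde{\mathcal{Q}},\mathcal{R})$ and the hereditary closure of $\widetilde{\mathcal{R}}$ under cokernels of monomorphisms; hypothesis (2) is again essential, precisely to promote the relevant object from $\mathcal{Q}\cap\widetilde{\mathcal{R}}$ into $\widetilde{\mathcal{Q}}$.

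With $\mathcal{W}:=\mathcal{W}_{1}=\mathcal{W}_{2}$ fixed, the remaining verifications follow the same pullback/pushout template. Trivial sequences give $\widetilde{\mathcal{Q}}\cup\widetilde{\mathcal{R}}\subseteq\mathcal{W}$. For $X\in\mathcal{Q}\cap\mathcal{W}$, a $\mathcal{W}_{1}$-witness $0\to X\to R\to Q\to 0$ has $R\in\mathcal{Q}\cap\widetilde{\mathcal{R}}=\widetilde{\mathcal{Q}}\cap\mathcal{R}$ by closure of $\mathcal{Q}$ under extensions combined with (2), so $X\in\widetilde{\mathcal{Q}}$ by hereditariness of $(\widetilde{\mathcal{Q}},\mathcal{R})$; thus $\mathcal{Q}\cap\mathcal{W}=\widetilde{\mathcal{Q}}$, and dually $\mathcal{R}\cap\mathcal{W}=\widetilde{\mathcal{R}}$. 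Thickness of $\mathcal{W}$ (retract-closure is immediate, while the two-out-of-three property follows from a further pushout/pullback argument using \emph{both} characterizations of $\mathcal{W}$ together with the hereditary closures of $\widetilde{\mathcal{Q}}$ and $\widetilde{\mathcal{R}}$) is then standard. Hovey's correspondence now delivers a unique abelian model structure $(\mathcal{Q},\mathcal{W},\mathcal{R})$; when each cotorsion pair is generated by a set, the cofibrant-generation clause of Hovey's theorem produces generating sets of (trivial) cofibrations from the generating sets of the two cotorsion pairs.
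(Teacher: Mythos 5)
The paper does not prove Lemma~4.2; it cites \cite[Proposition 1.4.2]{be14} and \cite[Theorem 1.1]{gj14}. Your plan --- pull back a special $\mathcal{Q}$-precover of the middle term (dually, push out a special $\mathcal{R}$-preenvelope) to pass between the two descriptions of $\mathcal{W}$, then verify $\mathcal{Q}\cap\mathcal{W}=\widetilde{\mathcal{Q}}$ and $\mathcal{W}\cap\mathcal{R}=\widetilde{\mathcal{R}}$, prove thickness, and invoke Hovey's correspondence --- is exactly Gillespie's argument, and the individual steps you describe are correct.

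There is, however, a genuine mismatch between what your argument produces and the lemma as printed, and you do not flag it. Your pullback yields $0\to\widetilde{R}_{1}\to P\to X\to 0$ with the \emph{subobject} $\widetilde{R}_{1}\in\widetilde{\mathcal{R}}$ and the \emph{middle term} $P\in\widetilde{\mathcal{Q}}$, and you declare that this exhibits $X\in\mathcal{W}_{2}$. But the paper's second displayed description of $\mathcal{W}$ puts $Q'\in\widetilde{\mathcal{Q}}$ as the subobject and $R'\in\widetilde{\mathcal{R}}$ as the middle term --- the roles of $\widetilde{\mathcal{Q}}$ and $\widetilde{\mathcal{R}}$ are interchanged relative to what you prove. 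The version you actually establish (subobject in $\widetilde{\mathcal{R}}$, middle term in $\widetilde{\mathcal{Q}}$) is the one in Gillespie's Lemma~2.4 and is the one compatible with factoring $0\to X$ as a trivial cofibration followed by a trivial fibration; it is also what makes your ``trivial sequence'' $0\to 0\to X\to X\to 0$ work for $X\in\widetilde{\mathcal{Q}}$. The version printed in the paper is actually false in general: take $\mathcal{C}=\mathbb{Z}\text{-Mod}$ with both cotorsion pairs equal to $(\mathbb{Z}\text{-Mod},\mathcal{I}_{\mathbb{Z}})$; hypotheses (1) and (2) hold and $\mathcal{W}=\mathbb{Z}\text{-Mod}$, yet there is no short exact sequence $0\to Q'\to R'\to\mathbb{Z}\to 0$ with $R'$ injective, since a quotient of a divisible group is divisible. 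So your proof is right, but it proves a statement whose second displayed line differs from the one in the paper; you should say explicitly that the lemma's printed $\mathcal{W}_{2}$ has $\widetilde{\mathcal{Q}}$ and $\widetilde{\mathcal{R}}$ transposed relative to \cite{be14,gj14}, rather than silently deriving a different formula than the one you set out to prove.
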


By the lemma above, given two hereditary small cotorsion pairs $(\mathcal{A}_{A},\mathcal{B}_{A})$, $(\mathcal{C}_{A},\mathcal{D}_{A})$ in $A$-Mod, which satisfy the conditions (1) and (2) in Lemma 4.2, there exists a cofibrantly generated abelian model structure $\mathcal{M}_{A}=(\mathcal{A}_{A},\mathcal{W}_{A},\mathcal{D}_{A})$. Similarly, given two hereditary small cotorsion pairs $(\widetilde{\mathcal{A}}_{B},\widetilde{\mathcal{B}}_{B})$, $(\widetilde{\mathcal{C}}_{B},\widetilde{\mathcal{D}}_{B})$ in $B$-Mod, which satisfy the conditions (1) and (2) in Lemma \ref{lem4.2}, there exists a unique class $\widetilde{\mathcal{W}}_{B}$, such that $\mathcal{M}_{B}=(\widetilde{\mathcal{A}}_{B},\widetilde{\mathcal{W}}_{B},\widetilde{\mathcal{D}}_{B})$ is a cofibrantly generated abelian model structure. Therefore, if $\mathrm{Tor}_{1}^{B}(M,X)=0$ for any $X\in \widetilde{\mathcal{A}}_{B}$, then by the proof of Theorem \ref{the3.8}, we have two complete hereditary cotorsion pairs $(\Phi(\mathcal{A}_{A},\widetilde{\mathcal{A}}_{B}), \mathrm{Rep}(\mathcal{B}_{A},\widetilde{\mathcal{B}}_{B}))$ and $(\Phi(\mathcal{C}_{A},\widetilde{\mathcal{C}}_{B}), \mathrm{Rep}(\mathcal{D}_{A},\widetilde{\mathcal{D}}_{B}))$. In general, these two cotorsion pairs don't satisfy the conditions (1) and (2) in Lemma \ref{lem4.2}. So we introduce the concept of perfect bimodules. A bimodule $_{A}M_{B}$ is \emph{perfect relative to $\mathcal{M}_{A}$ and $\mathcal{M}_{B}$}, if the induced cotorsion pairs $(\Phi(\mathcal{A}_{A},\widetilde{\mathcal{A}}_{B}), \mathrm{Rep}(\mathcal{B}_{A},\widetilde{\mathcal{B}}_{B}))$ and $(\Phi(\mathcal{C}_{A},\widetilde{\mathcal{C}}_{B}), \mathrm{Rep}(\mathcal{D}_{A},\widetilde{\mathcal{D}}_{B}))$ have the same core, that is, $\Phi(\mathcal{A}_{A},\widetilde{\mathcal{A}}_{B})\cap \mathrm{Rep}(\mathcal{B}_{A},\widetilde{\mathcal{B}}_{B})=\Phi(\mathcal{C}_{A},\widetilde{\mathcal{C}}_{B})\cap \mathrm{Rep}(\mathcal{D}_{A},\widetilde{\mathcal{D}}_{B})$.
For example, $M$ is perfect relative to $\mathcal{M}_{A}$ and $\mathcal{M}_{B}$ whenever $M\otimes_{B}\widetilde{\mathcal{B}}_{B}\subseteq \mathcal{B}_{A}$ and $M\otimes_{B}\widetilde{\mathcal{C}}_{B}\subseteq \mathcal{C}_{A}$. If $M$ is perfect, by Lemma \ref{lem4.2} again, there exists a unique class $\mathcal{W}_{T}$, such that  $\mathcal{M}_{T}=(\Phi(\mathcal{A}_{A},\widetilde{\mathcal{A}}_{B}),\mathcal{W}_{T},\mathrm{Rep}(\mathcal{D}_{A},\widetilde{\mathcal{D}}_{B}))$ is a Hovey triple. \\

A Quillen map of model categories $\mathcal{M} \rightarrow \mathcal{N}$ consists of a pair
of adjoint functors $(L, R) : \mathcal{M} \rightleftarrows \mathcal{N}$ such that $L$ preserves cofibrations and trivial
cofibrations (it is equivalent to require that $R$ preserves fibrations and trivial fibrations). In this case the pair $(L,R)$ is also called a \emph{Quillen adjunction}. A Quillen map induces adjoint total derived functors between the homotopy
categories \cite{ma12}. The class of weak equivalences is the most important class of morphisms in a model category. It is easy to
see that a map is a weak equivalence if and only if it factors as a trivial cofibration followed by a trivial fibration.  The following important characterization
is proved in \cite[Lemma 5.8]{ho02}.

\begin{lemma}\label{lem4.3} Let $\mathcal{M= (Q,W,R)}$ be a Hovey triple. Then a morphism is a weak
equivalence if and only if it factors as a monomorphism with trivial cokernel followed by an
epimorphism with trivial kernel.
\end{lemma}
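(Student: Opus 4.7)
The plan is to prove the two implications separately. The forward direction is essentially a translation of the standard model-category factorization, while the reverse direction will be reduced, by closure of weak equivalences under composition, to showing that a monomorphism with cokernel in $\mathcal{W}$ is a weak equivalence (the dual statement for an epimorphism with kernel in $\mathcal{W}$ is then symmetric).

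For the only-if direction I would invoke the standard fact that in any model category every weak equivalence factors as a trivial cofibration followed by a trivial fibration. Under the abelian model structure arising from the Hovey triple $(\mathcal{Q},\mathcal{W},\mathcal{R})$, Definition~\ref{def4.1} together with Hovey's correspondence identifies trivial cofibrations with monomorphisms whose cokernel lies in $\mathcal{Q}\cap\mathcal{W}\subseteq\mathcal{W}$ and trivial fibrations with epimorphisms whose kernel lies in $\mathcal{W}\cap\mathcal{R}\subseteq\mathcal{W}$, so such a factorization already has the required form.

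For the if direction, suppose $f=p\circ i$ with $i\colon X\hookrightarrow Y$ a monomorphism having $\coker i\in\mathcal{W}$ and $p\colon Y\twoheadrightarrow Z$ an epimorphism having $\ker p\in\mathcal{W}$. It suffices to show that $i$ and $p$ are separately weak equivalences. To handle $i$, I would use the model structure to factor $i=q\circ j$, where $j\colon X\to Y'$ is a trivial cofibration (so $\coker j\in\mathcal{Q}\cap\mathcal{W}$) and $q\colon Y'\to Y$ is a fibration (so $\ker q\in\mathcal{R}$). Applying the snake lemma to the commutative diagram with top row $0\to X\stackrel{j}{\to}Y'\to\coker j\to 0$, bottom row $0\to X\stackrel{i}{\to}Y\to\coker i\to 0$, and vertical maps $\mathrm{id}_X$, $q$, and the induced map on cokernels, yields (after noting $\coker q=0$) a short exact sequence $0\to\ker q\to\coker j\to\coker i\to 0$. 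Since $\mathcal{W}$ is a thick subcategory containing both $\coker j$ and $\coker i$, we conclude $\ker q\in\mathcal{W}$; combined with $\ker q\in\mathcal{R}$ this exhibits $q$ as a trivial fibration. Hence $i=q\circ j$ is the composite of a trivial cofibration with a trivial fibration, and in particular a weak equivalence. The case of $p$ is the formal dual, obtained by factoring $p$ as a cofibration followed by a trivial fibration and running the snake lemma argument in the opposite direction.

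The only computational point requiring care is the snake lemma step, namely checking that the induced map $\coker j\to\coker i$ is an epimorphism whose kernel is precisely $\ker q$; everything else is a formal combination of Hovey's correspondence, the thickness of $\mathcal{W}$, and the two-out-of-three axiom.
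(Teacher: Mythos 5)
The paper does not prove this lemma; it simply cites Hovey's original paper, stating ``the following important characterization is proved in [Lemma 5.8]{ho02}.'' Your blind proof is correct and is essentially the argument one would use to prove Hovey's lemma directly. The forward direction correctly combines the factorization axiom with two-out-of-three (to upgrade the fibration to a trivial fibration) and then translates via the abelian model structure axioms. The reverse direction is the genuinely nontrivial part, and you handle it correctly: factoring $i$ as trivial cofibration followed by fibration, using the snake lemma to produce the exact sequence $0\to\ker q\to\operatorname{coker} j\to\operatorname{coker} i\to 0$, invoking thickness of $\mathcal{W}$ to conclude $\ker q\in\mathcal{W}\cap\mathcal{R}$, hence that $q$ is a trivial fibration; the dual computation for $p$ (factoring as cofibration followed by trivial fibration, then running the snake lemma to get $0\to\ker p\to\ker q'\to\operatorname{coker} j'\to 0$) is equally sound. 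One minor remark: you justify $\operatorname{coker}q=0$ by noting $q$ is a fibration, but it would be worth spelling out that in an abelian model structure every fibration is by definition an epimorphism — this is the point where Definition 4.1 is actually used in the reverse direction. With that small clarification, the argument is complete.
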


Before giving our main result, we need the following crucial result.
By \cite[Proposition 1.2.7]{be14}, if $\mathcal{M=(C,W,F)}$ is an abelian model structure on  a Grothendieck category $\mathcal{A}$, then $\mathcal{M}$ is cofibrantly generated if and only if $(\mathcal{C}\cap \mathcal{W},\mathcal{F})$ and $(\mathcal{C},\mathcal{W}\cap \mathcal{F})$ are small. Moreover, \cite[Corollary 1.2.2]{be14} tells us that any cotorsion pair in module categories generated by a set is small.

\begin{proposition}\label{pro4.4} Let $T=\left(\begin{matrix}  A & M \\  0 & B \\\end{matrix}\right)$ be an upper triangular matrix ring, and let $\mathcal{M}_{A} =(\mathcal{A}_{A},\mathcal{W}_{A}$,
$\mathcal{D}_{A})$ and $\mathcal{M}_{B}=(\widetilde{\mathcal{A}}_{B},\widetilde{\mathcal{W}}_{B},\widetilde{\mathcal{D}}_{B})$ be the cofibrantly generated abelian model structures on $A$-$\mathrm{Mod}$ and $B$-$\mathrm{Mod}$, respectively.  If $\mathrm{Tor}_{1}^{B}(M,X)=0$ for any $X\in \widetilde{\mathcal{A}}_{B}$ and $M$ is perfect relative to $\mathcal{M}_{A}$ and $\mathcal{M}_{B}$, then $\mathcal{M}_{T}=(\Phi(\mathcal{A}_{A},\widetilde{\mathcal{A}}_{B}),\mathcal{W}_{T},\mathrm{Rep}(\mathcal{D}_{A},\widetilde{\mathcal{D}}_{B}))$ is a cofibrantly generated abelian model structure on $T$-Mod and the sequence
$$\xymatrix{
  \mathrm{Ho}(\mathcal{M}_{A}) \ar@<0.6ex>[r]^{\mathrm{L~i_{\ast}}} & \mathrm{Ho}(\mathcal{M}_{T})\ar@<0.6ex>[l]^{\mathrm{R~i^{!}}}\ar@<0.6ex>[r]^{\mathrm{L~j^{\ast}}} & \mathrm{Ho}(\mathcal{M}_{B})\ar@<0.6ex>[l]^{\mathrm{R~j_{\ast}}} }$$
is a localization sequence of triangulated categories, where $\mathrm{L~i_{\ast}}$, $\mathrm{L~j^{\ast}}$, $\mathrm{R~i^{!}}$ and $\mathrm{R~j_{\ast}}$ are the total derived functors of those in Section 3.
\end{proposition}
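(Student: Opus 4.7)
The plan has two stages: first produce $\mathcal{M}_T$, then verify the localization-sequence axioms. For the first stage, Hovey's correspondence decomposes $\mathcal{M}_A$ and $\mathcal{M}_B$ into four small hereditary cotorsion pairs $(\mathcal{A}_A,\mathcal{B}_A), (\mathcal{C}_A,\mathcal{D}_A)$ and $(\widetilde{\mathcal{A}}_B,\widetilde{\mathcal{B}}_B), (\widetilde{\mathcal{C}}_B,\widetilde{\mathcal{D}}_B)$ satisfying the two conditions of Lemma~\ref{lem4.2} on each side. Since $\widetilde{\mathcal{C}}_B\subseteq\widetilde{\mathcal{A}}_B$ the Tor-vanishing hypothesis automatically covers $\widetilde{\mathcal{C}}_B$, so Theorem~\ref{the3.4}, Proposition~\ref{pro3.6}, Proposition~\ref{pro3.7} and Remark~\ref{rem3.5} supply two small hereditary cotorsion pairs $(\Phi(\mathcal{A}_A,\widetilde{\mathcal{A}}_B),\mathrm{Rep}(\mathcal{B}_A,\widetilde{\mathcal{B}}_B))$ and $(\Phi(\mathcal{C}_A,\widetilde{\mathcal{C}}_B),\mathrm{Rep}(\mathcal{D}_A,\widetilde{\mathcal{D}}_B))$ in $T$-$\mathrm{Mod}$. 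The first condition of Lemma~\ref{lem4.2} is immediate coordinate-wise, while the second is by definition the ``perfect'' hypothesis; hence Lemma~\ref{lem4.2} produces $\mathcal{M}_T$.

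For the second stage, the exact functors $i_*$ and $j^*$ preserve (trivial) cofibrations by routine coordinate-wise checks: $i_*C=\binom{C}{0}$ lies in $\Phi(\mathcal{A}_A,\widetilde{\mathcal{A}}_B)$ (resp.\ $\Phi(\mathcal{C}_A,\widetilde{\mathcal{C}}_B)$) whenever $C\in\mathcal{A}_A$ (resp.\ $\mathcal{C}_A$), and $j^*$ of a member of a $\Phi$-class is its second coordinate. So $(i_*,i^!)$ and $(j^*,j_*)$ are Quillen adjunctions. To verify $\mathrm{R}\,i^!\circ\mathrm{L}\,i_*\cong\mathrm{id}$, take $X\in\mathcal{A}_A$ and a special $\mathcal{D}_A$-preenvelope $X\hookrightarrow X^{\mathcal{D}}$ with cokernel $P\in\mathcal{C}_A$; applying $i_*$ yields a fibrant replacement $i_*X\hookrightarrow\binom{X^{\mathcal{D}}}{0}$ in $\mathcal{M}_T$, so $\mathrm{R}\,i^!\mathrm{L}\,i_*X=i^!\binom{X^{\mathcal{D}}}{0}=X^{\mathcal{D}}\simeq X$. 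To verify $\mathrm{L}\,j^*\circ\mathrm{R}\,j_*\cong\mathrm{id}$, I note that $j^*$ (being exact) sends any short exact sequence witnessing $Z\in\mathcal{W}_T$ via Lemma~\ref{lem4.2} to one witnessing $Z_2\in\widetilde{\mathcal{W}}_B$; hence $j^*$ preserves all weak equivalences, giving $\mathrm{L}\,j^*\mathrm{R}\,j_*Y=j^*j_*Y=Y$.

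The main obstacle is the essential-image axiom: if $\mathrm{L}\,j^*Y=0$ then $Y\cong\mathrm{L}\,i_*Y'$ for some $Y'\in\mathrm{Ho}(\mathcal{M}_A)$. Replace $Y$ by a cofibrant representative $Y^c=\binom{Y_1^c}{Y_2^c}_{\phi^{Y^c}}\in\Phi(\mathcal{A}_A,\widetilde{\mathcal{A}}_B)$; the hypothesis forces $Y_2^c\in\widetilde{\mathcal{A}}_B\cap\widetilde{\mathcal{W}}_B=\widetilde{\mathcal{C}}_B$. The key construction is the canonical morphism $\binom{\phi^{Y^c}}{\mathrm{id}}\colon j_!Y_2^c=\binom{M\otimes_B Y_2^c}{Y_2^c}_{\mathrm{id}}\to Y^c$, which is a monomorphism because $\phi^{Y^c}$ is, and has cokernel $i_*(\mathrm{coker}\,\phi^{Y^c})$; this produces the short exact sequence
\[
0\to j_!Y_2^c\to Y^c\to i_*(\mathrm{coker}\,\phi^{Y^c})\to 0.
\]
Since $j_!Y_2^c\in\Phi(\mathcal{C}_A,\widetilde{\mathcal{C}}_B)\subseteq\mathcal{W}_T$ (identity structure map, cokernel $0\in\mathcal{C}_A$, and $Y_2^c\in\widetilde{\mathcal{C}}_B$), Lemma~\ref{lem4.3} identifies the surjection $Y^c\to i_*(\mathrm{coker}\,\phi^{Y^c})$ as a weak equivalence. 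Because $\mathrm{coker}\,\phi^{Y^c}\in\mathcal{A}_A$ is cofibrant in $\mathcal{M}_A$, this shows $Y\cong\mathrm{L}\,i_*(\mathrm{coker}\,\phi^{Y^c})$ in $\mathrm{Ho}(\mathcal{M}_T)$. The converse $\mathrm{L}\,j^*\mathrm{L}\,i_*=0$ is immediate since $j^*i_*=0$ on modules and $i_*$ sends cofibrant objects to cofibrant objects.
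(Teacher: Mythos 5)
Your argument follows the same overall strategy as the paper: decompose the two given model structures into their associated pairs of small hereditary cotorsion pairs, apply the cotorsion-pair machinery of Section~3 together with the perfectness hypothesis and Lemma~\ref{lem4.2} to build $\mathcal{M}_T$, show $(i_*,i^!)$ and $(j^*,j_*)$ are Quillen adjunctions, and then verify the three localization-sequence axioms on homotopy categories. Your verifications of $\mathrm{R}\,i^!\circ\mathrm{L}\,i_*\cong\mathrm{id}$ and $\mathrm{L}\,j^*\circ\mathrm{R}\,j_*\cong\mathrm{id}$ are actually simpler than the paper's: rather than tracing a morphism $f$ through several layers of replacements, you take a single cofibrant representative, pick a fibrant replacement of $i_*X$ of the specific form $\binom{X^{\mathcal{D}}}{0}$, and in the second case observe that $j^*$ already preserves all weak equivalences because it is exact and carries $\mathcal{W}_T$ into $\widetilde{\mathcal{W}}_B$ (an observation the paper does not exploit). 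Your kernel-versus-essential-image argument via the short exact sequence $0\to j_!Y_2^c\to Y^c\to i_*(\mathrm{coker}\,\phi^{Y^c})\to 0$ is essentially identical to the paper's. One minor point you leave implicit: to pass from the objectwise isomorphisms to the natural isomorphisms the statement requires, you should note that the isomorphisms you construct realize the derived unit (resp. counit) of the Quillen adjunction, so objectwise invertibility upgrades automatically to a natural isomorphism.

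There is, however, a genuine omission. A localization sequence of triangulated categories requires the functors involved to be triangulated, and you never verify that $\mathrm{L}\,i_*$, $\mathrm{R}\,i^!$, $\mathrm{L}\,j^*$, $\mathrm{R}\,j_*$ preserve exact triangles. The paper devotes a full paragraph to exactly this: using the description of distinguished triangles in $\mathrm{Ho}(\mathcal{M})$ as images of standard triangles arising from pushouts along embeddings into projective-injective objects of the Frobenius category $\mathcal{Q}\cap\mathcal{R}$ (from \cite{gj11}), together with \cite[Lemma 1.4.4]{be14} and its dual and the fact that $i_*$ and $j^*$ are exact and preserve projective-injective objects, one checks that $\mathrm{L}\,i_*$ and $\mathrm{L}\,j^*$ send standard triangles to standard triangles, and then \cite[6.7]{ke2} gives triangulatedness of the right adjoints. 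Without some such argument (or a citation covering the case of hereditary abelian model structures), your proof establishes the adjunction, unit/counit, and kernel conditions at the level of plain additive categories but not the full statement that this is a localization sequence of triangulated categories.
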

\begin{proof}
Let $\mathcal{B}_{A}:=\mathcal{W}_{A}\cap\mathcal{D}_{A}$, $\mathcal{C}_{A}:=\mathcal{A}_{A}\cap\mathcal{W}_{A}$, $\widetilde{\mathcal{B}}_{B}:=\widetilde{\mathcal{W}}_{B}\cap\widetilde{\mathcal{D}}_{B}$ and $\widetilde{\mathcal{C}}_{B}:=\widetilde{\mathcal{A}}_{B}\cap\widetilde{\mathcal{W}}_{B}$.
If $\mathrm{Tor}_{1}^{B}(M,X)=0$ for any $X\in \widetilde{\mathcal{A}}_{B}$
and $M$ is perfect relative to $\mathcal{M}_{A}$ and $\mathcal{M}_{B}$,  then $\mathrm{Tor}_{1}^{B}(M,Y)=0$ for any $Y\in \widetilde{\mathcal{C}}_{B}$ and $\mathcal{M}_{T}=(\Phi(\mathcal{A}_{A},\widetilde{\mathcal{A}}_{B}),\mathcal{W}_{T},\mathrm{Rep}(\mathcal{D}_{A},\widetilde{\mathcal{D}}_{B}))$ is a cofibrantly generated abelian model structure on $T$-Mod by Theorem \ref{the3.8}.
We first claim that $(i_{\ast},i^{!})$ and $(j^{\ast},j_{\ast})$ are Quillen adjunctions. Since (trivial) cofibrations equal monomorphisms with (trivial) cofibrant cokernels and (trival) fibrations equal epimorphisms with (trivial) fibrant kernels, the inclusions $i_{\ast}(\mathcal{A}_{A})\subseteq \Phi(\mathcal{A}_{A},\widetilde{\mathcal{A}}_{B})$ and $i_{\ast}(\mathcal{A}_{A}\cap \mathcal{W}_{A})\subseteq \Phi(\mathcal{A}_{A},\widetilde{\mathcal{A}}_{B})\cap \mathcal{W}_{T}$ imply that $i_{\ast}$ preserves cofibrations and trivial cofibrations. Thus $(i_{\ast},i^{!})$ is a Quillen adjunction.
Similarly, $j^{\ast}$ is a left adjoint and preserves cofibrations and trivial cofibrations. Hence $(j^{\ast},j_{\ast})$ is a Quillen adjunction by the definition.
By \cite[Proposition 16.2.2]{ma12}, the total derived functor $\mathrm{L~i_{\ast}}$ and $\mathrm{R~i^{!}}$ exist and form an adjoint between $\mathrm{Ho}(\mathcal{M}_{A})$ and $\mathrm{Ho}(\mathcal{M}_{T})$,  $\mathrm{L~j^{\ast}}$ and $\mathrm{R~j_{\ast}}$ exist and form an adjoint between $\mathrm{Ho}(\mathcal{M}_{T})$ and $\mathrm{Ho}(\mathcal{M}_{B})$.
That is, we have the following diagram
$$\xymatrix{
  \mathrm{Ho}(\mathcal{M}_{A}) \ar@<0.6ex>[r]^{\mathrm{L~i_{\ast}}} & \mathrm{Ho}(\mathcal{M}_{T})\ar@<0.6ex>[l]^{\mathrm{R~i^{!}}}\ar@<0.6ex>[r]^{\mathrm{L~j^{\ast}}} & \mathrm{Ho}(\mathcal{M}_{B}).\ar@<0.6ex>[l]^{\mathrm{R~j_{\ast}}} }$$
In general, the right derived functor is defined on objects by first taking a fibrant replacement and then applying the functor. Similarly, the left derived functor is defined by first taking a cofibrant replacement and then applying the functor. So we have computed $(\mathrm{L~i_{\ast}},\mathrm{R~i^{!}})=(i_{\ast}Q_{A},i^{!}R_{T})$ and $(\mathrm{L~j^{\ast}},\mathrm{R~j_{\ast}})=(j^{\ast}Q_{T},j_{\ast}R_{B})$. Here, the notation such as $Q_{A}$ means to take a special $\mathcal{A}_{A}$-precover. Similarly the notation $R_{T}$ means to take a special $\mathrm{Rep}(\mathcal{D}_{A},\widetilde{\mathcal{D}}_{B})$-preenvelope.

We wish to show that these functors preserve
exact triangles. By  \cite[6.7]{ke2}, it
suffices to prove that $\mathrm{L~i_{\ast}}$  and $\mathrm{L~j^{\ast}}$ are triangulated. Recall from \cite[Proposition 4.4 and Section 5]{gj11} that the distinguished triangles in Ho$(\mathcal{M})$
are, up to isomorphism, the images in Ho$(\mathcal{M})$ of distinguished triangles in $(\mathcal{Q}\cap \mathcal{R})/\omega$ under
the equivalence $(\mathcal{Q}\cap \mathcal{R})/\omega\rightarrow \mathrm{Ho}(\mathcal{M})$. Now every exact triangle is by definition isomorphic to a standard
triangle of the form $X\stackrel{f}\rightarrow Y\rightarrow C_{f}\rightarrow \Sigma X$, arising from a
short exact sequence $0\rightarrow X\stackrel{f}\rightarrow Y\rightarrow Z\rightarrow0$ as in the pushout diagram below where $W$
is a projective-injective object:
$$\xymatrix{
    & 0 \ar[d]_{} & 0 \ar[d]_{}  &  &  \\
  0  \ar[r]^{} & X \ar[d]_{} \ar[r]^{} & Y \ar[d]_{} \ar[r]^{} & Z \ar@{=}[d]_{} \ar[r]^{} & 0  \\
  0  \ar[r]^{} & W \ar[d]_{} \ar[r]^{} & C_{f} \ar[d]_{} \ar[r]^{} & Z \ar[r]^{} & 0  \\
    & \Sigma X \ar[d]_{} \ar@{=}[r]^{} & \Sigma X \ar[d]_{}&  &  \\
   & 0 & 0.  &  &    }$$
First we point out that if $\mathcal{(Q, W, R)}$ is any hereditary Hovey triple, then the fully exact subcategory $\mathcal{Q} \cap \mathcal{R}$ is a Frobenius category with $\mathcal{Q} \cap \mathcal{W}\cap \mathcal{R}$ being precisely the class of projective-injective objects.
The main point is that by using \cite[Lemma 1.4.4]{be14} and its dual, along with the fact that both functors $i_{\ast}$ and $j^{\ast}$ are exact and preserve projective-injective objects, any diagram of this form is sent to another diagram of this form. It follows that  $\mathrm{L~i_{\ast}}$  and $\mathrm{L~j^{\ast}}$ send standard triangles (resp. exact triangles) to standard triangles (resp. exact triangles).

In order to show we have a localization sequence, it remains to show

(1) $\mathrm{R~i^{!}}\circ\mathrm{L~i_{\ast}}\cong 1_{\mathrm{Ho}(\mathcal{M}_{A})}$.

(2) $\mathrm{L~j^{\ast}}\circ\mathrm{R~j_{\ast}}\cong 1_{\mathrm{Ho}(\mathcal{M}_{B})}$.

(3) The essential image of $\mathrm{L~i_{\ast}}$ equals the kernel of $\mathrm{L~j^{\ast}}$.

To prove (1), let $f:X\rightarrow Y$ be a homomorphism in $A$-Mod.
Using the completeness of the cotorsion pair $(\mathcal{A}_{A}\cap\mathcal{W}_{A},\mathcal{D}_{A})$, we get the following commutative diagram
$$\xymatrix{
  0  \ar[r]^{} & X \ar[d]_{f} \ar[r]^{q_{A}} & X' \ar[d]_{\widetilde{f}} \ar[r]^{} & Z_{1} \ar[r]^{} & 0 \\
  0 \ar[r]^{} & Y \ar[r]^{q'_{A}} & Y' \ar[r]^{} & Z'_{1} \ar[r]^{} & 0,   }$$
  where $X',~Y'\in \mathcal{D}_{A}$, $Z_{1},~Z'_{1}\in \mathcal{A}_{A}\cap\mathcal{W}_{A}$. Note that $X'$ is a fibrant replacement of $X$ in $\mathcal{M}_{A}$, so we have natural isomorphisms $q_{A}$ and $q'_{A}$ in $\mathrm{Ho}(\mathcal{M}_{A})$. The functor $i_{\ast}Q_{A}$ acts by $\widetilde{f}\mapsto \widehat{f}$, where $\widehat{f}$ is any map making the diagram below commute
$$\xymatrix{
  0   \ar[r]^{ } & i_{\ast}K  \ar[r]^{} & i_{\ast}F_{1}\ar[d]_{\widehat{f}} \ar[r]^{j_{A}} & i_{\ast}X' \ar[d]_{i_{\ast}\widetilde{f}} \ar[r]^{} & 0  \\
  0 \ar[r]^{} & i_{\ast}K' \ar[r]^{} & i_{\ast}F'_{1} \ar[r]^{j'_{A}} & i_{\ast}Y' \ar[r]^{} & 0,   }$$
 where the rows are exact, $F_{1},~F'_{1}\in\mathcal{A}_{A}$, and $K,~K'\in \mathcal{D}_{A}\cap \mathcal{W}_{A}=\mathcal{B}_{A}$. Moreover, we get $F_{1},~F'_{1}\in\mathcal{A}_{A}\cap \mathcal{D}_{A}$ since $\mathcal{D}_{A}$ is closed under extensions. Now applying $i^{!}R$ to $\widehat{f}$ gives us $\overline{f}$ in the next commutative diagram
$$\xymatrix{
  0  \ar[r]^{} & i^{!}i_{\ast}F_{1} \ar[d]_{i^{!}\widehat{f}} \ar[r]^{p_{A}} & i^{!}L_{1} \ar[d]_{\overline{f}} \ar[r]^{} & i^{!}C_{1} \ar[r]^{} & 0 \\
  0 \ar[r]^{} & i^{!}i_{\ast}F'_{1} \ar[r]^{p'_{A}} & i^{!}L'_{1} \ar[r]^{} & i^{!}C'_{1} \ar[r]^{} & 0,   }$$
where $L_{1}, L'_{1}\in \mathrm{Rep}(\mathcal{D}_{A},\widetilde{\mathcal{D}}_{B})$, $C_{1}, C'_{1}\in \Phi(\mathcal{A}_{A},\widetilde{\mathcal{A}}_{B})\cap \mathcal{W}_{T}=\Phi(\mathcal{C}_{A},\widetilde{\mathcal{C}}_{B})$. Since $i_{\ast}F_{1},~i_{\ast}F'_{1}\in \mathrm{Rep}(\mathcal{D}_{A},\widetilde{\mathcal{D}}_{B})$ and $\mathrm{Rep}(\mathcal{D}_{A},\widetilde{\mathcal{D}}_{B})$ is coresolving, we get that $C_{1}, C'_{1}\in\Phi(\mathcal{C}_{A},\widetilde{\mathcal{C}}_{B})\cap\mathrm{Rep}(\mathcal{D}_{A},\widetilde{\mathcal{D}}_{B})=
\Phi(\mathcal{A}_{A},\widetilde{\mathcal{A}}_{B})\cap\mathrm{Rep}(\mathcal{B}_{A},\widetilde{\mathcal{B}}_{B})$ by hypotheses. Furthermore, it is easy to check we have inclusions $i_{\ast}(\mathcal{B}_{A})\subseteq \mathrm{Rep}(\mathcal{B}_{A},\widetilde{\mathcal{B}}_{B})=\mathcal{W}_{T}\cap \mathrm{Rep}(\mathcal{D}_{A},\widetilde{\mathcal{D}}_{B})$, $i^{!}(\mathrm{Rep}(\mathcal{B}_{A},\widetilde{\mathcal{B}}_{B}))\subseteq \mathcal{B}_{A}=\mathcal{W}_{A}\cap\mathcal{D}_{A}$ and $i^{!}(\Phi(\mathcal{A}_{A},\widetilde{\mathcal{A}}_{B})\cap\mathrm{Rep}(\mathcal{B}_{A},\widetilde{\mathcal{B}}_{B}))\subseteq \mathcal{B}_{A}\subseteq\mathcal{W}_{A}$. Thus $i^{!}j_{A},~i^{!}j'_{A},~p_{A},~p'_{A}$ are all weak equivalences in $\mathcal{M}_{A}$ by Lemma \ref{lem4.3}. So, in $\mathrm{Ho}(\mathcal{M}_{A})$, we have a commutative diagram
$$\xymatrix{
 X\ar[r]^{q_{A}}\ar[d]_{f}& X' \ar[d]_{\widetilde{f}} \ar[r]^{\eta_{X'}} & i^{!}i_{\ast}X' \ar[d]_{ i^{!}i_{\ast}\widetilde{f}} & i^{!}i_{\ast}F_{1} \ar[r]^{p_{A}} \ar[l]_{i^{!}j_{A}} \ar[d]^{i_{\ast}\widehat{f}} & i^{!}L_{1}\ar[d]^{\overline{f}} \\
Y\ar[r]^{q'_{A}}&  Y' \ar[r]^{\eta_{Y'}} & i^{!}i_{\ast}Y' & i^{!}i_{\ast}F'_{1} \ar[l]^{i^{!}j'_{A}}\ar[r]^{p'_{A}} & i^{!}L_{2},  }$$
where $\eta:1\rightarrow i^{!}i_{\ast}$ is the unit of adjunction $(i_{\ast}, i^{!})$. This diagram gives rise to a natural isomorphism: $\mathrm{R~i^{!}}\circ\mathrm{L~i_{\ast}}\cong 1_{\mathrm{Ho}(\mathcal{M}_{A})}.$

Next we prove (2).  Let $X\in \mathrm{Ho}(\mathcal{M}_{B})$ be any object. Using the completeness of the cotorsion pair $(\widetilde{\mathcal{A}}_{B}\cap\widetilde{\mathcal{W}}_{B},\widetilde{\mathcal{D}}_{B})$, we get a short exact sequence $0\rightarrow X\rightarrow E\rightarrow L\rightarrow0$ with $E\in\widetilde{\mathcal{D}}_{B}$ and $L\in \widetilde{\mathcal{A}}_{B}\cap\widetilde{\mathcal{W}}_{B}$. Note that $E$ is a fibrant replacement of $X$ in $\mathcal{M}_{B}$, so we have a natural isomorphism $X\cong E$ in $\mathrm{Ho}(\mathcal{M}_{B})$. The functor $\mathrm{R~j_{\ast}}=j_{\ast}R_{B}$ acts by $E\mapsto j_{\ast}D$, where $j_{\ast}D$ is in the short exact sequence $0\rightarrow j_{\ast}E\overset{p}{\rightarrow} j_{\ast}D\rightarrow j_{\ast}C\rightarrow0$. Here $D\in\widetilde{\mathcal{D}}_{B}$, $C\in\widetilde{\mathcal{A}}_{B}\cap\widetilde{\mathcal{W}}_{B}=\widetilde{\mathcal{C}}_{B}$. Since $\widetilde{\mathcal{D}}_{B}$ is coresolving, we get $C\in \widetilde{\mathcal{C}}_{B}\cap\widetilde{\mathcal{D}}_{B}=\widetilde{\mathcal{A}}_{B}\cap\widetilde{\mathcal{B}}_{B}$. Now applying $j^{\ast}Q_{T}$ to $j_{\ast}D$ gives us $j^{\ast}N$ in the next exact sequence
$$0\rightarrow j^{\ast}K\rightarrow j^{\ast}N\overset{q}{\rightarrow} j^{\ast}j_{\ast}D\rightarrow0,$$
where $N$ is a cofibrant replacement of $j_{\ast}D$, $N\in\Phi(\mathcal{A}_{A},\widetilde{\mathcal{A}}_{B})$, $K\in \mathrm{Rep}(\mathcal{B}_{A},\widetilde{\mathcal{B}}_{B})=\mathcal{W}_{T}\cap\mathrm{Rep}(\mathcal{D}_{A},\widetilde{\mathcal{D}}_{B})$. We have inclusions $j_{\ast}(\widetilde{\mathcal{A}}_{B}\cap\widetilde{\mathcal{B}}_{B})\subseteq\mathrm{Rep}(\mathcal{B}_{A},\widetilde{\mathcal{B}}_{B})=\mathcal{W}_{T}\cap\mathrm{Rep}(\mathcal{D}_{A},\widetilde{\mathcal{D}}_{B})$ and $j^{\ast}(\mathrm{Rep}(\mathcal{B}_{A},\widetilde{\mathcal{B}}_{B}))\subseteq \widetilde{\mathcal{B}}_{B}=\widetilde{\mathcal{W}}_{B}\cap\widetilde{\mathcal{D}}_{B}$. Thus $j^{\ast}p,~q$ are weak equivalences in $\mathcal{M}_{B}$. Hence, we have isomorphisms $\mathrm{L~j^{\ast}}\circ\mathrm{R~j_{\ast}}(X)\cong\mathrm{L~j^{\ast}}
\circ\mathrm{R~j_{\ast}}(E)\cong j^{\ast}N\stackrel{q}\cong j^{\ast}j_{\ast}D\stackrel{j^{\ast}p}\cong j^{\ast}j_{\ast}E\cong j^{\ast}j_{\ast}X \cong X$ in $\mathrm{Ho}(\mathcal{M}_{B})$. By an argument similar to that in (1), we see that these isomorphisms are natural.

For (3), denote by $\mathrm{Im}~\mathrm{L~i_{\ast}}$ the essential image of $\mathrm{L~i_{\ast}}$. It is easy to see $\mathrm{Im}~\mathrm{L~i_{\ast}}\subseteq \mathrm{ker}~\mathrm{L~j^{\ast}}$. Conversely, let $X=\binom{X_{1}}{X_{2}}\in \mathrm{ker}~\mathrm{L~j^{\ast}}$. We claim that there exists $Y\in A$-Mod  such that $\mathrm{L~i_{\ast}}~(Y)\cong X$ in $\mathrm{Ho}(\mathcal{M})$. By the action of the functor $\mathrm{L~j^{\ast}}$, we have an exact sequence $0\rightarrow K\rightarrow P\rightarrow X\rightarrow0$ in $T$-Mod, where $P=\binom{P_{1}}{P_{2}}_{\phi^{P}}\in\Phi(\mathcal{A}_{A},\widetilde{\mathcal{A}}_{B})$, $K=\binom{K_{1}}{K_{2}}\in\mathcal{W}_{T}\cap\mathrm{Rep}(\mathcal{D}_{A},\widetilde{\mathcal{D}}_{B})$. Note that $P$ is isomorphic to $X$ in $\mathrm{Ho}(\mathcal{M}_{T})$ and this sequence induces an exact sequence $0\rightarrow K_{2}\rightarrow P_{2}\rightarrow X_{2}\rightarrow0$ in $B$-Mod, where $K_{2}\in\widetilde{\mathcal{B}}_{B}=\widetilde{\mathcal{W}}_{B}\cap\widetilde{\mathcal{D}}_{B}$, $P_{2}\in\widetilde{\mathcal{A}}_{B}$. Since $\mathrm{L~j^{\ast}}~(X) \in \widetilde{\mathcal{W}}_{B}$, we get $P_{2}\in\widetilde{\mathcal{A}}_{B}\cap\widetilde{\mathcal{W}}_{B}=\widetilde{\mathcal{C}}_{B}$, which implies that $\binom{M\otimes_{B}P_{2}}{P_{2}}_{id}\in \Phi(\mathcal{C}_{A},\widetilde{\mathcal{C}}_{B})$.
In fact, define $Y:= \mathrm{coker}\phi^{P}$, we have $\mathrm{L~i_{\ast}}~(\mathrm{coker}\phi^{P})\cong X$ in Ho($\mathcal{M}_{T}$). Indeed, consider the short exact sequence
$$0\longrightarrow\left(\begin{matrix}  M\otimes_{B}P_{2}  \\ P_{2}  \\\end{matrix}\right)_{id}\longrightarrow\left(\begin{matrix}  P_{1}  \\ P_{2} \\\end{matrix}\right)_{\phi^{P}}\longrightarrow\left(\begin{matrix}  \mathrm{coker}\phi^{P}  \\ 0 \\\end{matrix}\right)_{0}\longrightarrow0.
$$
Since $\binom{M\otimes_{B}P_{2}}{P_{2}}\in \Phi(\mathcal{C}_{A},\widetilde{\mathcal{C}}_{B})=\mathcal{W}_{T}\cap\Phi(\mathcal{A}_{A},\widetilde{\mathcal{A}}_{B})$ and $\mathrm{coker}\phi^{P}\in\mathcal{A}_{A}$, we get that $X\cong P\cong\binom{\mathrm{coker}\phi^{P}}{0}\cong\mathrm{L~i_{\ast}}~(\mathrm{coker}\phi^{P})$ in $\mathrm{Ho}(\mathcal{M}_{T})$. Hence the desired result follows immediately.
\end{proof}
By a similar type of argument to the above proposition, we have the next result.

\begin{proposition} \label{pro4.5}Let $T=\left(\begin{matrix}  A & M \\  0 & B \\\end{matrix}\right)$ be an upper triangular matrix ring, and let $\mathcal{M}_{A}=(\mathcal{A}_{A},\mathcal{W}_{A},\mathcal{D}_{A})$ and $\mathcal{M}_{B}=(\widetilde{\mathcal{A}}_{B},\widetilde{\mathcal{W}}_{B},\widetilde{\mathcal{D}}_{B})$ be the cofibrantly generated abelian model structures on $A$-$\mathrm{Mod}$ and $B$-$\mathrm{Mod}$, respectively.  If $\mathrm{Tor}_{1}^{B}(M,X)=0$ for any $X\in \widetilde{\mathcal{A}}_{B}$ and $M$ is perfect relative to $\mathcal{M}_{A}$ and $\mathcal{M}_{B}$, then $\mathcal{M}_{T}=(\Phi(\mathcal{A}_{A},\widetilde{\mathcal{A}}_{B}),\mathcal{W}_{T},\mathrm{Rep}(\mathcal{D}_{A},\widetilde{\mathcal{D}}_{B}))$ is a cofibrantly generated abelian model structure on $T$-Mod and the sequence
$$\xymatrix{
  \mathrm{Ho}(\mathcal{M}_{A}) \ar@<-0.6ex>[r]_-{\mathrm{R~i_{\ast}}} & \mathrm{Ho}(\mathcal{M}_{T})\ar@<-0.6ex>[l]_-{\mathrm{L~i^{\ast}}}\ar@<-0.6ex>[r]_-{\mathrm{R~j^{\ast}}} & \mathrm{Ho}(\mathcal{M}_{B})\ar@<-0.6ex>[l]_-{\mathrm{L~j_{!}}} }$$
is a colocalization sequence of triangulated categories, where $\mathrm{L~i^{\ast}}$, $\mathrm{L~j_{!}}$, $\mathrm{R~i_{\ast}}$ and $\mathrm{R~j^{\ast}}$ are the total derived functors of those in Section 3.
\end{proposition}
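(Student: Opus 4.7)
The plan is to mirror the proof of Proposition \ref{pro4.4} in dualized form, with the adjoint pairs $(i^{\ast},i_{\ast})$ and $(j_{!},j^{\ast})$ replacing $(i_{\ast},i^{!})$ and $(j^{\ast},j_{\ast})$. The cofibrantly generated abelian model structure $\mathcal{M}_{T}$ is already established in Proposition \ref{pro4.4}, so what remains is to derive and analyze the new adjoint pairs on the homotopy categories. My first step is to check that both $(i^{\ast},i_{\ast})$ and $(j_{!},j^{\ast})$ are Quillen adjunctions. For $(i^{\ast},i_{\ast})$, it is enough to verify that the right adjoint $i_{\ast}$ preserves fibrations and trivial fibrations: since $i_{\ast}$ is exact and $i_{\ast}X=\binom{X}{0}$, the inclusions $i_{\ast}(\mathcal{D}_{A})\subseteq\mathrm{Rep}(\mathcal{D}_{A},\widetilde{\mathcal{D}}_{B})$ and $i_{\ast}(\mathcal{B}_{A})\subseteq\mathrm{Rep}(\mathcal{B}_{A},\widetilde{\mathcal{B}}_{B})$ are immediate from $0\in\widetilde{\mathcal{B}}_{B}\cap\widetilde{\mathcal{D}}_{B}$. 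For $(j_{!},j^{\ast})$ I check that the left adjoint $j_{!}$ preserves (trivial) cofibrations: the hypothesis $\mathrm{Tor}_{1}^{B}(M,-)|_{\widetilde{\mathcal{A}}_{B}}=0$ guarantees that $j_{!}$ preserves the relevant monomorphisms, and the identity structure map of $j_{!}(K)=\binom{M\otimes_{B}K}{K}_{id}$ places it in $\Phi(\mathcal{A}_{A},\widetilde{\mathcal{A}}_{B})$ (respectively in $\Phi(\mathcal{C}_{A},\widetilde{\mathcal{C}}_{B})=\Phi(\mathcal{A}_{A},\widetilde{\mathcal{A}}_{B})\cap\mathcal{W}_{T}$) for $K\in\widetilde{\mathcal{A}}_{B}$ (respectively $\widetilde{\mathcal{C}}_{B}$).

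By \cite[Proposition 16.2.2]{ma12}, the total derived functors $\mathrm{L}\,i^{\ast}$, $\mathrm{R}\,i_{\ast}$, $\mathrm{L}\,j_{!}$, $\mathrm{R}\,j^{\ast}$ exist and form adjoint pairs between the homotopy categories. Their triangulated structure follows by the same reasoning used in Proposition \ref{pro4.4}: the restriction of $i^{\ast}$ to $\Phi(\mathcal{A}_{A},\widetilde{\mathcal{A}}_{B})$ is exact (by the snake lemma argument already exploited in Proposition \ref{pro3.6}), and $j_{!}$ is exact on short exact sequences with terms in $\widetilde{\mathcal{A}}_{B}$ (by the Tor-vanishing hypothesis), while both functors send projective-injective objects to projective-injective objects, so \cite[Lemma 1.4.4]{be14} and its dual imply that standard triangles go to standard triangles.

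The third step is to verify the three defining conditions of a colocalization sequence. For $\mathrm{L}\,i^{\ast}\circ\mathrm{R}\,i_{\ast}\cong 1_{\mathrm{Ho}(\mathcal{M}_{A})}$, the key observation is that $i_{\ast}$ carries cofibrant replacements in $\mathcal{M}_{A}$ to cofibrant replacements in $\mathcal{M}_{T}$: given a fibrant replacement $X\to X'$ in $\mathcal{M}_{A}$ and then a cofibrant replacement $F\to X'$ with $F\in\mathcal{A}_{A}$ and kernel in $\mathcal{B}_{A}$, the induced map $i_{\ast}F\to i_{\ast}X'$ is an epimorphism with $i_{\ast}F\in\Phi(\mathcal{A}_{A},\widetilde{\mathcal{A}}_{B})$ and kernel in $\mathrm{Rep}(\mathcal{B}_{A},\widetilde{\mathcal{B}}_{B})$, so it is a legitimate choice of cofibrant replacement. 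Applying $i^{\ast}$ and using the strict identity $i^{\ast}i_{\ast}=\mathrm{id}_{A\text{-}\mathrm{Mod}}$ then recovers the weakly equivalent chain $F\to X'\to X$ in $\mathcal{M}_{A}$. For $\mathrm{R}\,j^{\ast}\circ\mathrm{L}\,j_{!}\cong 1_{\mathrm{Ho}(\mathcal{M}_{B})}$, I first show that the exact functor $j^{\ast}$ sends $\mathcal{W}_{T}$ into $\widetilde{\mathcal{W}}_{B}$ (using the description of $\mathcal{W}_{T}$ given by Lemma \ref{lem4.2} together with $j^{\ast}(\mathrm{Rep}(\mathcal{B}_{A},\widetilde{\mathcal{B}}_{B}))\subseteq\widetilde{\mathcal{B}}_{B}$ and $j^{\ast}(\Phi(\mathcal{C}_{A},\widetilde{\mathcal{C}}_{B}))\subseteq\widetilde{\mathcal{C}}_{B}$), so $j^{\ast}$ preserves weak equivalences; applying $j^{\ast}$ to the fibrant replacement $j_{!}Q_{B}Y\to R_{T}j_{!}Q_{B}Y$ gives a weak equivalence back to $j^{\ast}j_{!}Q_{B}Y=Q_{B}Y\cong Y$.

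The part I expect to require the most care is the essential-image-versus-kernel identification, that is, the equality $\mathrm{Im}\,\mathrm{R}\,i_{\ast}=\mathrm{ker}\,\mathrm{R}\,j^{\ast}$. The inclusion $\subseteq$ is immediate since $\mathrm{R}\,i_{\ast}X=i_{\ast}R_{A}X=\binom{R_{A}X}{0}$ is already fibrant in $\mathcal{M}_{T}$ and $j^{\ast}i_{\ast}=0$. For the reverse inclusion, given $X\in\mathrm{Ho}(\mathcal{M}_{T})$ with $\mathrm{R}\,j^{\ast}X\cong 0$, I replace $X$ by a fibrant replacement $X'=\binom{X'_{1}}{X'_{2}}_{\phi^{X'}}\in\mathrm{Rep}(\mathcal{D}_{A},\widetilde{\mathcal{D}}_{B})$; the isomorphism $X'_{2}\cong\mathrm{R}\,j^{\ast}X\cong 0$ in $\mathrm{Ho}(\mathcal{M}_{B})$ forces $X'_{2}\in\widetilde{\mathcal{D}}_{B}\cap\widetilde{\mathcal{W}}_{B}=\widetilde{\mathcal{B}}_{B}$. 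Then the canonical short exact sequence $0\to j_{\ast}X'_{2}\to X'\to i_{\ast}X'_{1}\to 0$ in $T$-Mod has $j_{\ast}X'_{2}=\binom{0}{X'_{2}}\in\mathrm{Rep}(\mathcal{B}_{A},\widetilde{\mathcal{B}}_{B})\subseteq\mathcal{W}_{T}$, so the quotient map is a weak equivalence in $\mathcal{M}_{T}$; since $X'_{1}\in\mathcal{D}_{A}$ is already fibrant in $\mathcal{M}_{A}$ (giving $\mathrm{R}\,i_{\ast}X'_{1}\cong i_{\ast}X'_{1}$), this exhibits $X\cong\mathrm{R}\,i_{\ast}(X'_{1})$ in $\mathrm{Ho}(\mathcal{M}_{T})$, as required.
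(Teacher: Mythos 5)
Your proof is correct and is slightly tighter than the paper's at several points, so let me highlight what you do differently. In part (1) the paper invokes Ken Brown's lemma to pass $i_*$ across weak equivalences between fibrant objects, then explicitly builds a cofibrant replacement of $i_*D$ in $T$-Mod and checks the snake lemma preserves exactness under $i^*$; you instead observe directly that $i_*$ already carries a special $\mathcal{A}_A$-precover to a special $\Phi(\mathcal{A}_A,\widetilde{\mathcal{A}}_B)$-precover and then use the strict identity $i^*i_*=\mathrm{id}$, which bypasses both steps. In part (2) the paper does another explicit replacement computation; you prove the cleaner structural statement that the exact functor $j^*$ sends $\mathcal{W}_T$ into $\widetilde{\mathcal{W}}_B$ (via Lemma \ref{lem4.2}'s description of trivial objects and exactness of $j^*$), hence preserves weak equivalences by Lemma \ref{lem4.3}, and then reads the unit isomorphism off directly. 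Finally, where the paper writes ``(3) This is clear,'' you give an actual argument.

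There is one small slip in your proof of (3): the canonical short exact sequence of $T$-modules attached to $X'=\binom{X'_1}{X'_2}_{\phi^{X'}}$ runs the other way, namely $0\to i_*X'_1\to X'\to j_*X'_2\to 0$ (the submodule is $\binom{X'_1}{0}$, not $\binom{0}{X'_2}$). Your conclusion survives unchanged: with the correct orientation the inclusion $i_*X'_1\hookrightarrow X'$ is a monomorphism with trivial cokernel $j_*X'_2\in\mathrm{Rep}(\mathcal{B}_A,\widetilde{\mathcal{B}}_B)\subseteq\mathcal{W}_T$, hence a weak equivalence by Lemma \ref{lem4.3}, and then $X\cong X'\cong i_*X'_1\cong\mathrm{R}\,i_*(X'_1)$ in $\mathrm{Ho}(\mathcal{M}_T)$ as you wanted.
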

\begin{proof} Note that the functors $i^{\ast}$ and $j_{!}$ are not exact, so we give an outline of the proof. Let $\mathcal{B}_{A}:=\mathcal{W}_{A}\cap\mathcal{D}_{A}$, $\mathcal{C}_{A}:=\mathcal{A}_{A}\cap\mathcal{W}_{A}$, $\widetilde{\mathcal{B}}_{B}:=\widetilde{\mathcal{W}}_{B}\cap\widetilde{\mathcal{D}}_{B}$ and $\widetilde{\mathcal{C}}_{B}:=\widetilde{\mathcal{A}}_{B}\cap\widetilde{\mathcal{W}}_{B}$.

First, it is easy to check $(i^{\ast},i_{\ast})$ and $(j_{!},j^{\ast})$ are Quillen adjunctions. Hence, their total derived functors exist. By using \cite[Lemma 1.4.4]{be14} and its dual, along with the fact that the functors $i_{\ast}$ and $j^{\ast}$ are exact and preserve projective-injective objects, we get that $\mathrm{R~i_{\ast}}$ and $\mathrm{R~j^{\ast}}$ are triangle functors. Then $\mathrm{L~i^{\ast}}$ and $\mathrm{L~j_{!}}$ are also triangle functors by \cite[6.4]{ke2}. To show we have a colocalization sequence, it remains to show

(1) $\mathrm{L~i^{\ast}}\circ\mathrm{R~i_{\ast}}\cong 1_{\mathrm{Ho}(\mathcal{M}_{A})}$.

(2) $\mathrm{R~j_{\ast}}\circ\mathrm{L~j_{!}}\cong 1_{\mathrm{Ho}(\mathcal{M}_{B})}$.

(3) The essential image of $\mathrm{R~i_{\ast}}$ equals the kernel of $\mathrm{R~j^{\ast}}$.

To prove (1), let $X\in \mathrm{Ho}(\mathcal{M}_{A})$. Since $(i^{\ast},i_{\ast})$ is a Quillen adjunction, $i_{\ast}$ takes trivial fibrations between fibrant object in $\mathcal{M}_{A}$ to weak equivalences in $\mathcal{M}_{T}$. By Ken Brown's Lemma (see \cite[Lemma 14.2.9]{ma12}), we get that $i_{\ast}$ takes all weak equivalences between fibrant objects to weak equivalences in $\mathcal{M}_{T}$. Thus $i_{\ast}R_{A}X\cong i_{\ast}R_{A} Q_{A} X$ in Ho$(\mathcal{M}_{T})$. Replacing $X$ by a cofibration object $X':=Q_{A}X$, we have an isomorphism $X\cong X'$ in $\mathrm{Ho}(\mathcal{M}_{A})$, where $X'\in \mathcal{A}_{A}$. The functor $\mathrm{R~i_{\ast}}=i_{\ast}R_{A}$ acts by $X'\mapsto i_{\ast}D$, where $i_{\ast}D$ is in the short exact sequence $0\rightarrow i_{\ast}X'\overset{p}{\rightarrow} i_{\ast}D\rightarrow i_{\ast}C\rightarrow0$. Here $D\in\mathcal{D}_{A}\cap\mathcal{A}_{A}$, $C\in\mathcal{A}_{A}\cap\mathcal{W}_{A}=\mathcal{C}_{A}$. Now applying $i^{\ast}R_{A}$ to $i_{\ast}D$ gives us $i_{\ast}N$ in the next exact sequence
$$ i^{\ast}K\rightarrow i^{\ast}N\overset{q}{\rightarrow} i^{\ast}i_{\ast}D\rightarrow0, \eqno(\ddag)$$
where $N$ is a cofibrant replacement of $i_{\ast}D$, $N=\binom{N_{1}}{N_{2}}\in\Phi(\mathcal{A}_{A},\widetilde{\mathcal{A}}_{B})$, $K=\binom{K_{1}}{K_{2}}\in \mathrm{Rep}(\mathcal{B}_{A},\widetilde{\mathcal{B}}_{B})\cap\Phi(\mathcal{A}_{A},\widetilde{\mathcal{A}}_{B})=\mathrm{Rep}(\mathcal{D}_{A},\widetilde{\mathcal{D}}_{B})\cap\Phi(\mathcal{C}_{A},\widetilde{\mathcal{C}}_{B})$. Consider the following commutative diagram
$$\xymatrix{
  & M\otimes _{B}K_{2} \ar[d]_{} \ar[r]^{} &M\otimes _{B}N_{2} \ar[d]_{} \ar[r]^{} & 0 \ar[d]_{} & \\
  0 \ar[r]^{} & K_{1} \ar[r]^{} & N_{1} \ar[r]^{} & D \ar[r]^{} & 0.   }$$
From the Snake Lemma, the sequence ($\ddag$) is also left exact. Furthermore,
We have inclusions $i_{\ast}(\mathcal{C}_{A})\subseteq\Phi(\mathcal{C}_{A},\widetilde{\mathcal{C}}_{B})=\mathcal{W}_{T}\cap\Phi(\mathcal{A}_{A},\widetilde{\mathcal{A}}_{B})$ and $i^{\ast}(\mathrm{Rep}(\mathcal{D}_{A},\widetilde{\mathcal{D}}_{B})\cap\Phi(\mathcal{C}_{A},\widetilde{\mathcal{C}}_{B}))\subseteq i^{\ast}(\Phi(\mathcal{C}_{A},\widetilde{\mathcal{C}}_{B})) \subseteq \mathcal{C}_{A}=\mathcal{W}_{A}\cap\mathcal{A}_{A}$. Thus $i^{\ast}p,~q$ are weak equivalences in $\mathcal{M}_{A}$. Hence, we have isomorphisms $\mathrm{L~j^{\ast}}\circ\mathrm{R~j_{\ast}}(X)\cong\mathrm{L~j^{\ast}}
\circ\mathrm{R~j_{\ast}}(X')\cong i^{\ast}N\stackrel{q}\cong i^{\ast}i_{\ast}D\stackrel{i^{\ast}p}\cong i^{\ast}i_{\ast}X'\cong X' \cong X$ in $\mathrm{Ho}(\mathcal{M}_{A})$. We see that these isomorphisms are natural.

Next we prove (2).  Let $X\in \mathrm{Ho}(\mathcal{M}_{B})$. Replacing $X$ by a cofibration object $X'$, we have an isomorphism $X\cong X'$ in $\mathrm{Ho}(\mathcal{M}_{B})$, where $X'\in \widetilde{\mathcal{A}}_{B}$. The functor $\mathrm{L~j_{!}}=j_{!}R_{B}$ acts by $X'\mapsto j_{!}F$, where $j_{!}F$ is in the short exact sequence
$$ j_{!}K\stackrel{ j_{!}g}\longrightarrow j_{!}F\stackrel{ j_{!}h} \longrightarrow j_{!}X'\longrightarrow0,\eqno(\S)$$
where $g: K\rightarrow F$ is a kernel of $h: F\rightarrow X'$, and $F\in\widetilde{\mathcal{D}}_{B}\cap\widetilde{\mathcal{A}}_{B}$. Since $\widetilde{\mathcal{A}}_{B}$ is resolving, $K\in\widetilde{\mathcal{B}}_{B}\cap\widetilde{\mathcal{A}}_{B}$.
By the proof of Lemma \ref{lem3.2} (5),  we get an exact sequence in $T$-Mod:
$$0\longrightarrow \binom{\mathrm{ker}(1\otimes h)}{K}_{\phi}\longrightarrow j_{!}F\stackrel{ j_{!}h} \longrightarrow j_{!}X'\longrightarrow0,$$
where  $j_{!}F,~j_{!}X'\in \Phi(\mathcal{A}_{A},\widetilde{\mathcal{A}}_{B})$. Then we have $\binom{\mathrm{ker}(1\otimes h)}{K}_{\phi}\in\Phi(\mathcal{A}_{A},\widetilde{\mathcal{A}}_{B})$ since $\Phi(\mathcal{A}_{A},\widetilde{\mathcal{A}}_{B})$ is resolving. It follows that $\phi$ is monomorphic. Thus $1\otimes g$ is monomorphic. So the sequence ($\S$) is also left exact.
 Now applying $j^{\ast}R$ to $j_{!}F$ gives us $j^{\ast}N$ in the next exact sequence
$$ 0\rightarrow j^{\ast}j_{!}F\stackrel{q}\rightarrow j^{\ast}N\rightarrow j^{\ast}C\rightarrow0. $$
Here $N\in\mathrm{Rep}(\mathcal{D}_{A},\widetilde{\mathcal{D}}_{B})$, $C\in\Phi(\mathcal{C}_{A},\widetilde{\mathcal{C}}_{B})$. Furthermore,
we have inclusions $j_{!}(\widetilde{\mathcal{A}}_{B}\cap\widetilde{\mathcal{B}}_{B})=j_{!}(\widetilde{\mathcal{C}}_{B}\cap
\widetilde{\mathcal{D}}_{B})\subseteq\Phi(\mathcal{C}_{A},\widetilde{\mathcal{C}}_{B})=
\mathcal{W}_{T}\cap\Phi(\mathcal{A}_{A},\widetilde{\mathcal{A}}_{B})$, $j^{\ast}(\Phi(\mathcal{C}_{A},\widetilde{\mathcal{C}}_{B}))\subseteq  \widetilde{\mathcal{C}}_{B}=\widetilde{\mathcal{W}}_{B}\cap\widetilde{\mathcal{A}}_{B}$. Thus $j_{!}h,~q$ are weak equivalences in $\mathcal{M}_{B}$. Therefore, we have isomorphisms $\mathrm{R~j_{\ast}}\circ\mathrm{L~j_{!}}(X)\cong
\mathrm{R~j_{\ast}}\circ\mathrm{L~j_{!}}(X')\cong j^{\ast}N \stackrel{q^{-1}}\cong j^{\ast}j_{!}F\stackrel{j^{\ast}j_{!}h}\cong j^{\ast}j_{!}X'\cong X' \cong X$ in $\mathrm{Ho}(\mathcal{M}_{B})$. It is easy to see that these isomorphisms are natural.

(3) This is clear.
\end{proof}

Now, we are ready to give our main result of this section.

\begin{theorem}\label{the4.6}Let $T=\left(\begin{matrix}  A & M \\  0 & B \\\end{matrix}\right)$ be an upper triangular matrix ring, and let $\mathcal{M}_{A}=(\mathcal{A}_{A},\mathcal{W}_{A},\mathcal{D}_{A})$ and $\mathcal{M}_{B}=(\widetilde{\mathcal{A}}_{B},\widetilde{\mathcal{W}}_{B},\widetilde{\mathcal{D}}_{B})$ be the cofibrantly generated abelian model structures on $A$-$\mathrm{Mod}$ and $B$-$\mathrm{Mod}$, respectively. If $\mathrm{Tor}_{1}^{B}(M,X)=0$ for any $X\in \widetilde{\mathcal{A}}_{B}$ and $M$ is perfect relative to $\mathcal{M}_{A}$ and $\mathcal{M}_{B}$, then $\mathcal{M}_{T}=(\Phi(\mathcal{A}_{A},\widetilde{\mathcal{A}}_{B}),\mathcal{W}_{T},\mathrm{Rep}(\mathcal{D}_{A},\widetilde{\mathcal{D}}_{B}))$ is a cofibrantly generated abelian model structure on $T$-Mod and we have a recollement as shown below
  $$\xymatrixcolsep{4pc}\xymatrix{\mathrm{Ho}(\mathcal{M}_{A})
  \ar[r]^{\mathrm{L~i_{\ast}~}\cong\mathrm{~R~i_{\ast}}}&
  \ar@<-4ex>[l]_{\mathrm{L~i^{\ast}}}\ar@<3ex>[l]^{\mathrm{R~i^{!}}}\mathrm{Ho}(\mathcal{M}_{T})
\ar[r]^{\mathrm{L~j^{\ast}~}\cong\mathrm{~R~j^{\ast}}}&\ar@<-4ex>[l]_{\mathrm{L~j_{!}}}\ar@<3ex>[l]^{\mathrm{R~j_{\ast}}}
\mathrm{Ho}(\mathcal{M}_{B}),}$$
where $\mathrm{L~i^{\ast}}$, $\mathrm{L~i_{\ast}}$, $\mathrm{L~j_{!}}$, $\mathrm{L~j^{\ast}}$, $\mathrm{R~i_{\ast}}$,  $\mathrm{R~j^{\ast}}$, $\mathrm{R~i^{!}}$ and $\mathrm{R~j_{\ast}}$ are the total derived functors of those in Section 3.
\end{theorem}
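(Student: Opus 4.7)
My plan is to combine Propositions \ref{pro4.4} and \ref{pro4.5}: the former supplies the localization sequence $(\mathrm{L}\,i_*,\mathrm{R}\,i^!)$, $(\mathrm{L}\,j^*,\mathrm{R}\,j_*)$, while the latter supplies the colocalization sequence $(\mathrm{L}\,i^*,\mathrm{R}\,i_*)$, $(\mathrm{L}\,j_!,\mathrm{R}\,j^*)$ on the same three homotopy categories. Since a recollement is by definition precisely such a diagram whose middle arrows agree in the two presentations, it suffices to establish natural isomorphisms $\mathrm{L}\,i_* \cong \mathrm{R}\,i_*$ and $\mathrm{L}\,j^* \cong \mathrm{R}\,j^*$; the six adjoints will then automatically fit into the recollement picture.

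The key observation is that both $i_*$ and $j^*$ are exact on the underlying abelian categories, and each is simultaneously left and right Quillen (from the adjunctions $(i^*,i_*)$, $(i_*,i^!)$ in one case, and $(j_!,j^*)$, $(j^*,j_*)$ in the other). I will show that each such functor preserves \emph{all} weak equivalences, so that both of its derived functors agree (up to natural isomorphism) with the naive descent of the functor itself to the homotopy category. By Lemma \ref{lem4.3}, a weak equivalence $f$ in $\mathcal{M}_A$ factors as a monomorphism $\iota$ with $\mathrm{coker}(\iota)\in\mathcal{C}_A=\mathcal{A}_A\cap\mathcal{W}_A$ followed by an epimorphism $p$ with $\mathrm{ker}(p)\in\mathcal{B}_A=\mathcal{W}_A\cap\mathcal{D}_A$. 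Using exactness of $i_*$ together with the inclusions $i_*(\mathcal{C}_A)\subseteq\Phi(\mathcal{C}_A,\widetilde{\mathcal{C}}_B)=\mathcal{W}_T\cap\Phi(\mathcal{A}_A,\widetilde{\mathcal{A}}_B)$ and $i_*(\mathcal{B}_A)\subseteq\mathrm{Rep}(\mathcal{B}_A,\widetilde{\mathcal{B}}_B)=\mathcal{W}_T\cap\mathrm{Rep}(\mathcal{D}_A,\widetilde{\mathcal{D}}_B)$ (the equalities on the right are exactly the Hovey correspondence for $\mathcal{M}_T$ already exploited in Proposition \ref{pro4.4}), I obtain that $i_*\iota$ is a trivial cofibration and $i_*p$ is a trivial fibration in $\mathcal{M}_T$, so $i_*f$ is a weak equivalence by Lemma \ref{lem4.3} again.

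For $j^*$ the cleanest route goes through Lemma \ref{lem4.2}: given any $X\in\mathcal{W}_T$, choose a short exact sequence $0\to X\to R\to Q\to 0$ with $R\in\mathrm{Rep}(\mathcal{B}_A,\widetilde{\mathcal{B}}_B)$ and $Q\in\Phi(\mathcal{C}_A,\widetilde{\mathcal{C}}_B)$. Applying the exact functor $j^*$ gives $0\to j^*X\to j^*R\to j^*Q\to 0$ with $j^*R\in\widetilde{\mathcal{B}}_B$ and $j^*Q\in\widetilde{\mathcal{C}}_B$, so $j^*X\in\widetilde{\mathcal{W}}_B$ by the very description of $\widetilde{\mathcal{W}}_B$ recorded in Lemma \ref{lem4.2}. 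Combined with the factorization of Lemma \ref{lem4.3} this shows $j^*$ preserves weak equivalences, whence $\mathrm{L}\,j^*\cong j^*\cong\mathrm{R}\,j^*$ and analogously $\mathrm{L}\,i_*\cong i_*\cong\mathrm{R}\,i_*$; splicing these natural isomorphisms into Propositions \ref{pro4.4} and \ref{pro4.5} produces the asserted recollement. The main technical point—and the place where the standing hypotheses on $M$ enter essentially—is the equality $\Phi(\mathcal{A}_A,\widetilde{\mathcal{A}}_B)\cap\mathcal{W}_T=\Phi(\mathcal{C}_A,\widetilde{\mathcal{C}}_B)$ and its dual, since without the perfectness of $M$ and the $\mathrm{Tor}$-vanishing hypothesis the containments used above would fail and the two derived versions of the middle functor could genuinely diverge.
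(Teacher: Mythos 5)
Your proposal is correct and follows the same overall strategy as the paper: invoke Propositions \ref{pro4.4} and \ref{pro4.5} for the (co)localization sequences, then establish the natural isomorphisms $\mathrm{L}\,i_*\cong\mathrm{R}\,i_*$ and $\mathrm{L}\,j^*\cong\mathrm{R}\,j^*$ so the two middle functors coincide, which by definition gives the recollement. The way you justify those isomorphisms differs in presentation from the paper. The paper works with explicit cofibrant and fibrant replacement diagrams of a given morphism $f$, shows the induced maps $j_1, j_2, q_1, q_2$ are weak equivalences via the inclusions $i_*(\mathcal{B}_A)\subseteq\mathcal{W}_T\cap\mathrm{Rep}(\mathcal{D}_A,\widetilde{\mathcal{D}}_B)$ and $i_*(\mathcal{C}_A)\subseteq\mathcal{W}_T\cap\Phi(\mathcal{A}_A,\widetilde{\mathcal{A}}_B)$, and splices these into a commutative ladder; you instead observe that $i_*$ and $j^*$ are simultaneously left and right Quillen, hence preserve all weak equivalences, so both derived functors agree with the naive descent. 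These are the same ideas packaged more abstractly, and the key technical input (the coincidence of cores, i.e.\ $\Phi(\mathcal{A}_A,\widetilde{\mathcal{A}}_B)\cap\mathcal{W}_T=\Phi(\mathcal{C}_A,\widetilde{\mathcal{C}}_B)$ and its dual) is correctly identified as the place where the hypotheses on $M$ enter. Two small remarks on precision: Lemma \ref{lem4.3} as stated only gives a factorization with cokernel and kernel in $\mathcal{W}_A$, not in $\mathcal{C}_A$ and $\mathcal{B}_A$; the sharper factorization you use (trivial cofibration followed by trivial fibration) holds by the usual factorization axiom plus two-out-of-three, but should be cited as such rather than as Lemma \ref{lem4.3}. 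Also, for $j^*$ you reach for the description of $\mathcal{W}_T$ from Lemma \ref{lem4.2} to prove $j^*(\mathcal{W}_T)\subseteq\widetilde{\mathcal{W}}_B$; this is fine, though the simpler route symmetric to your $i_*$ argument, using $j^*(\Phi(\mathcal{C}_A,\widetilde{\mathcal{C}}_B))\subseteq\widetilde{\mathcal{C}}_B$ and $j^*(\mathrm{Rep}(\mathcal{B}_A,\widetilde{\mathcal{B}}_B))\subseteq\widetilde{\mathcal{B}}_B$, would also suffice and is what the paper's omitted ``similar'' argument presumably intends.
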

\begin{proof} By Propositions \ref{pro4.4} and \ref{pro4.5}, we only need to show that there are natural isomorphisms $\mathrm{L~i_{\ast}}\cong \mathrm{R~i_{\ast}}$ and $\mathrm{L~j^{\ast}}\cong \mathrm{R~j^{\ast}}$. Let $\mathcal{B}_{A}:=\mathcal{W}_{A}\cap\mathcal{D}_{A}$, $\mathcal{C}_{A}:=\mathcal{A}_{A}\cap\mathcal{W}_{A}$, $\widetilde{\mathcal{B}}_{B}:=\widetilde{\mathcal{W}}_{B}\cap\widetilde{\mathcal{D}}_{B}$ and $\widetilde{\mathcal{C}}_{B}:=\widetilde{\mathcal{A}}_{B}\cap\widetilde{\mathcal{W}}_{B}$. Let $f: X\rightarrow Y$ be a morphism in $\mathrm{Ho}(\mathcal{M}_{T})$. The functor $\mathrm{L~i_{\ast}}$ acts by $f\mapsto \overline{f}$, where $\overline{f}$ is any morphism making the diagram below commute
$$\xymatrix{
  0   \ar[r]^{ } & i_{\ast}K_{1}  \ar[r]^{} & i_{\ast}P_{1}\ar[d]_{\overline{f}} \ar[r]^{j_{1}} & i_{\ast}X \ar[d]_{i_{\ast}f} \ar[r]^{} & 0  \\
  0 \ar[r]^{} & i_{\ast}K_{2} \ar[r]^{} & i_{\ast}P_{2} \ar[r]^{j_{2}} & i_{\ast}Y \ar[r]^{} & 0.   }$$
Here the rows are exact, $P_{1},~P_{2}\in\mathcal{A}_{A}$, and $K_{1},~K_{2}\in \mathcal{D}_{A}\cap \mathcal{W}_{A}=\mathcal{B}_{A}$. The functor $\mathrm{R~i_{\ast}}$ acts by $f\mapsto \widehat{f}$, where $\widehat{f}$ is any morphism making the next diagram commute
$$\xymatrix{
  0  \ar[r]^{} & i_{\ast}X \ar[d]_{} \ar[r]^{q_{1}} & i_{\ast}D_{1} \ar[d]_{\widehat{f}} \ar[r]^{} & i_{\ast}C_{1} \ar[r]^{} & 0 \\
  0 \ar[r]^{} & i_{\ast}Y \ar[r]^{q_{2}} & i_{\ast}D_{2} \ar[r]^{} & i_{\ast}C_{2} \ar[r]^{} & 0,   }$$
where $D_{1},~D_{2}\in\mathcal{D}_{A}$, $C_{1}~C_{2}\in\mathcal{A}_{A}\cap \mathcal{W}_{A}=\mathcal{C}_{A}$. Note that $i_{\ast}(\mathcal{B}_{A})\subseteq \mathrm{Rep}(\mathcal{B}_{A},\widetilde{\mathcal{B}}_{B})=\mathcal{W}_{T}\cap\mathrm{Rep}(\mathcal{D}_{A},\widetilde{\mathcal{D}}_{B})$ and $i_{\ast}(\mathcal{C}_{A})\subseteq \Phi(\mathcal{C}_{A},\widetilde{\mathcal{C}}_{B})=\mathcal{W}_{T}\cap\Phi(\mathcal{A}_{A},\widetilde{\mathcal{A}}_{B})$, then $j_{1},~j_{2},~q_{1}$ and $q_{2}$ are weak equivalences in $\mathcal{M}_{T}$. Hence in $\mathrm{Ho}(\mathcal{M}_{T})$, we have a commutative diagram
$$\xymatrix{
   i_{\ast}P_{1} \ar[d]_{\overline{f}} \ar[r]^{j_{1}} &  i_{\ast}X \ar[d]_{} \ar[r]^{q_{1}} &  i_{\ast}D_{1} \ar[d]^{\widehat{f}} \\
   i_{\ast}P_{2} \ar[r]^{j_{2}} &  i_{\ast}Y \ar[r]^{q_{2}} &  i_{\ast}D_{2} }
$$
giving rise to a natural isomorphism $\mathrm{L~i_{\ast}}\cong \mathrm{R~i_{\ast}}$.
The proof of the natural isomorphism $\mathrm{L~j^{\ast}}\cong \mathrm{R~j^{\ast}}$ is similar.
\end{proof}

\begin{remark}\label{rem4.7}
 Let $\mathcal{M}_{A}=(\mathcal{A}_{A},\mathcal{W}_{A},\mathcal{D}_{A})$ and $\mathcal{M}_{B}=(\widetilde{\mathcal{A}}_{B},\widetilde{\mathcal{W}}_{B},\widetilde{\mathcal{D}}_{B})$ be the cofibrantly generated abelian model structures on $A$-$\mathrm{Mod}$ and $B$-$\mathrm{Mod}$, respectively, and let $\mathcal{B}_{A}:=\mathcal{W}_{A}\cap\mathcal{D}_{A}$, $\mathcal{C}_{A}:=\mathcal{A}_{A}\cap\mathcal{W}_{A}$, $\widetilde{\mathcal{B}}_{B}:=\widetilde{\mathcal{W}}_{B}\cap\widetilde{\mathcal{D}}_{B}$ and $\widetilde{\mathcal{C}}_{B}:=\widetilde{\mathcal{A}}_{B}\cap\widetilde{\mathcal{W}}_{B}$.
By Theorem \ref{the3.8}, if  $\mathrm{Ext}_{A}^{1}(M,F)=0$ for any $F\in \mathcal{D}_{A}$, we have two complete hereditary cotorsion pairs $(\mathrm{Rep}(\mathcal{A}_{A},\widetilde{\mathcal{A}}_{B}), \Psi(\mathcal{B}_{A},\widetilde{\mathcal{B}}_{B}))$ and $(\mathrm{Rep}(\mathcal{C}_{A},\widetilde{\mathcal{C}}_{A}), \Psi(\mathcal{D}_{A},\widetilde{\mathcal{D}}_{B}))$.
Similarly, we can see that under some hypotheses, there exist a cofibrantly generated model structure $\mathcal{M}'_{T}=(\mathrm{Rep}(\mathcal{A}_{A},\widetilde{\mathcal{A}}_{B}),\mathcal{W}'_{T},\Psi(\mathcal{D}_{A},\widetilde{\mathcal{D}}_{B}))$  on $T$-Mod and a recollement of $\mathrm{Ho}(\mathcal{M}'_{T})$ relative to $\mathrm{Ho}(\mathcal{M}_{A})$ and $\mathrm{Ho}(\mathcal{M}_{B}).$
\end{remark}

Finally, we give some applications of Theorem \ref{the4.6} to Gorenstein homological algebra.

Let $R$ be a ring.
Recall that a left $R$-module $M$ is \emph{Gorenstein projective} \cite{ee00} if $M=\mathrm{ker}d_{\mathbb{P}}^{0}$ for some exact complex of projective left $R$-modules
$$\mathbb{P}:\cdots\rightarrow
P^{-1}\stackrel{d_{\mathbb{P}}^{-1}}\rightarrow P^{0} \stackrel{d_{\mathbb{P}}^{0}}\rightarrow P^{1}\stackrel{}\rightarrow \cdots$$
 which remains exact after applying $\mathrm{Hom}_{R}(-,Q)$ for any projective left $R$-module $Q$. Denote by $\mathcal{GP}_{R}$ the class of Gorenstein projective left $R$-modules. Recall that a ring $R$ is \emph{left Gorenstein regular} \cite[Definition 2.1]{ee14} if and only if each projective left $R$-module has finite injective dimension and each injective left $R$-module has finite projective dimension. Each Gorenstein ring is left Gorenstein regular (see \cite[Theorem 9.1.11]{ee00}). By \cite[Theorem 3.5]{ee14}, if $_{A}M$ has finite projective dimension, $M_{B}$  has finite flat
 dimension and that $A$ is left Gorenstein regular, then $\mathcal{GP}_{T}=\Phi(\mathcal{GP})$.

Let $R$ be a ring with all projective left $R$-modules having finite injective dimension. Then $(\mathcal{GP}_{R},{\mathcal{GP}_{R}^{\perp}})$ forms a hereditary cotorsion pair which generated by a set by \cite[Theorem 4.2]{wa16}. Moreover, we have $\mathcal{GP}_{R}\cap{\mathcal{GP}_{R}^{\perp}}=\mathcal{P}_{R}$ by \cite{gj16}. Therefore, by Lemma \ref{lem4.2}, there exists a category $\mathcal{W}_{R}$ of modules, such that $(\mathcal{GP}_{R},\mathcal{W}_{R},R\text{-Mod})$ forms a hereditary abelian model structure.

Recall that for any ring $R$, the big singularity category $D_{Sg}(R)=D^{b}(R\text{-Mod})/K^{b}(\mathrm{Proj}\text{-}R)$ (see \cite{ch11} and \cite[Section 6]{be00}). If $R$ is Gorenstein regular, then \cite[Theorem 4.1]{ie12} and \cite[Theorem 4.16]{be00} imply that we have a triangle equivalence $\underline{\mathcal{GP}_{R}}\overset{\sim}{\rightarrow} D_{Sg}(R)$ (see also \cite[Corollary 3.4]{we16}). Thus, as a consequence of Theorem \ref{the4.6}, we obtain the next recollements. One can compare it with \cite[Theorem 3.5]{zh13}.

\begin{corollary}\label{cor5.1}
Let $T=\left(\begin{matrix}  A & M \\  0 & B \\\end{matrix}\right)$ be an upper triangular matrix ring. Suppose $_{A}M$ has finite projective dimension, $M_{B}$  has finite flat dimension, and $T$ is left Gorenstein regular. Then we have a recollement
$$\xymatrix{\underline{\mathcal{GP}_{A}}\ar[r]^{}&\ar@<-2ex>[l]!R|{}
\ar@<2ex>[l]!R|{}\underline{\mathcal{GP}_{T}}
\ar[r]!L|{}&\ar@<-2ex>[l]!L|{}\ar@<2ex>[l]!L|{}\underline{\mathcal{GP}_{B}},}$$
or equivalently, a recollement
$$\xymatrix{D_{Sg}(A)\ar[r]^{}&\ar@<-2ex>[l]!R|{}\ar@<2ex>[l]!R|{}D_{Sg}(T)
\ar[r]!L|{}&\ar@<-2ex>[l]!L|{}\ar@<2ex>[l]!L|{}D_{Sg}(B).}$$
\end{corollary}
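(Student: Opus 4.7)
The plan is to obtain the recollement as a direct application of Theorem \ref{the4.6} to the Gorenstein projective model structures on $A$-Mod, $B$-Mod and $T$-Mod. First I would set up these three model structures. Under the hypotheses on $M$, the assumption that $T$ is left Gorenstein regular propagates to $A$ and $B$, so for each $R\in\{A,B,T\}$ every projective left $R$-module has finite injective dimension. Hence by \cite[Theorem 4.2]{wa16} the pair $(\mathcal{GP}_R,\mathcal{GP}_R^{\perp})$ is a hereditary cotorsion pair generated by a set, and its core is $\mathcal{P}_R$ by \cite{gj16}. Combined with the trivial cotorsion pair $(\mathcal{P}_R,R\text{-Mod})$, Lemma \ref{lem4.2} yields a cofibrantly generated abelian model structure
$$\mathcal{M}_R=(\mathcal{GP}_R,\mathcal{W}_R,R\text{-Mod}),$$
in which $\mathcal{W}_R$ is the class of modules of finite projective dimension. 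In the notation of Section 4 this means $\mathcal{B}_A=\mathcal{W}_A$, $\mathcal{C}_A=\mathcal{P}_A$, and similarly on the $B$-side.

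Next I would check the two hypotheses of Theorem \ref{the4.6}. For the vanishing of $\mathrm{Tor}_1^B(M,X)$ with $X\in\mathcal{GP}_B$, I would pick a complete projective resolution $\cdots\to P^{-1}\to P^0\to P^1\to\cdots$ with $X=\ker(P^0\to P^1)$, write $X^{(n)}$ for the $n$-th cosyzygy (which is again Gorenstein projective), and use dimension shifting along the short exact sequences $0\to X^{(k)}\to P^k\to X^{(k+1)}\to 0$ to obtain $\mathrm{Tor}_1^B(M,X)\cong \mathrm{Tor}_{n+1}^B(M,X^{(n)})$; this is $0$ once $n+1$ exceeds the flat dimension of $M_B$.

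For the perfectness of $_AM_B$, I must verify that the two cotorsion pairs in $T$-Mod produced by Theorem \ref{the3.8} have a common core, namely
$$\Phi(\mathcal{GP}_A,\mathcal{GP}_B)\cap\mathrm{Rep}(\mathcal{W}_A,\mathcal{W}_B)=\Phi(\mathcal{P}_A,\mathcal{P}_B)\cap T\text{-Mod}.$$
The right-hand side is $\mathcal{P}_T$ by the classical description of projective $T$-modules in \cite[Theorem 3.1]{ah00}. For the left-hand side, \cite[Theorem 3.5]{ee14} identifies $\Phi(\mathcal{GP}_A,\mathcal{GP}_B)$ with $\mathcal{GP}_T$, and by Theorem \ref{the3.8} the cotorsion pair $(\Phi(\mathcal{GP}_A,\mathcal{GP}_B),\mathrm{Rep}(\mathcal{W}_A,\mathcal{W}_B))$ agrees with $(\mathcal{GP}_T,\mathcal{GP}_T^{\perp})$; hence the left-hand side is $\mathcal{GP}_T\cap\mathcal{GP}_T^{\perp}=\mathcal{P}_T$.

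With both hypotheses in place, Theorem \ref{the4.6} applies, producing a cofibrantly generated abelian model structure on $T$-Mod which, by the above identifications, coincides with $\mathcal{M}_T=(\mathcal{GP}_T,\mathcal{W}_T,T\text{-Mod})$, together with the desired recollement of homotopy categories $\mathrm{Ho}(\mathcal{M}_A)\leftrightarrows\mathrm{Ho}(\mathcal{M}_T)\leftrightarrows\mathrm{Ho}(\mathcal{M}_B)$. Since each $\mathcal{M}_R$ is hereditary with core $\mathcal{P}_R$, the canonical triangle equivalence $\mathrm{Ho}(\mathcal{M}_R)\simeq\underline{\mathcal{GP}_R}$ recalled in Section 4 yields the first recollement of the statement, and the equivalence $\underline{\mathcal{GP}_R}\simeq D_{Sg}(R)$ valid for Gorenstein regular rings by \cite[Theorem 4.1]{ie12} and \cite[Theorem 4.16]{be00} gives the second formulation. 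The main obstacle is the perfectness verification, which hinges on the identification $\mathcal{GP}_T=\Phi(\mathcal{GP}_A,\mathcal{GP}_B)$ from \cite[Theorem 3.5]{ee14}; once this is in hand, the rest is a matter of assembling the general machinery of the paper.
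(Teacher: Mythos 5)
Your proof is correct and follows essentially the same route as the paper's: reduce to $A$, $B$ left Gorenstein regular via \cite[Theorem 3.1]{ee14}, verify the Tor-vanishing hypothesis, observe perfectness of $M$, apply Theorem \ref{the4.6}, and pass to $\underline{\mathcal{GP}_R}$ and $D_{Sg}(R)$. The only difference is that you fill in two steps the paper treats tersely, namely the direct dimension-shifting argument for $\mathrm{Tor}_1^B(M,-)$ (where the paper invokes the argument of \cite[Lemma 4.1]{xi08}) and the explicit identification of both cotorsion-pair cores with $\mathcal{P}_T$ via $\mathcal{GP}_T=\Phi(\mathcal{GP})$ from \cite[Theorem 3.5]{ee14} and $\mathcal{GP}_T\cap\mathcal{GP}_T^{\perp}=\mathcal{P}_T$ from \cite{gj16} (where the paper simply asserts perfectness is ``easy to see'').
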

\begin{proof}
By assumption and \cite[Theorem 3.1]{ee14}, $T$ is left Gorenstein regular if and only if $A$ and $B$ are left Gorenstein regular. Then $A$ and $B$ are rings with all projective left modules having finite injective dimension.
Let $\mathcal{M}_{A}=(\mathcal{GP}_{A},\mathcal{W}_{R},A\text{-Mod})$ and $\mathcal{M}_{A}=(\mathcal{GP}_{B},\mathcal{W}_{B},B\text{-Mod})$ be abelian model structures on $A\text{-Mod}$ and $B\text{-Mod}$, respectively. Since $M_{B}$  has finite flat dimension, by the similar argument in \cite[Lemma 4.1]{xi08}, we have $\mathrm{Tor}_{1}^{B}(M,E)=0$ for any $E\in \mathcal{GP}_{B}$. Furthermore, it is easy to see that the bimodule $M$ is perfect relative to $\mathcal{M}_{A}$ and $\mathcal{M}_{B}$. Thus the recollements follow from Theorem \ref{the4.6}.
\end{proof}

Recall that a left $R$-module $M$ is \emph{Gorenstein injective} \cite{ee00} if $M=\mathrm{ker}d_{\mathbb{I}}^{0}$ for some exact complex of injective left $R$-modules $$\mathbb{I}:\cdots\rightarrow
I^{-1}\stackrel{d_{\mathbb{I}}^{-1}}\rightarrow I^{0} \stackrel{d_{\mathbb{I}}^{0}}\rightarrow I^{1}\stackrel{}\rightarrow \cdots$$
 which remains exact after applying $\mathrm{Hom}_{R}(E,-)$ for any injective left $R$-module $E$. Denote by $\mathcal{GI}_{R}$ the class of Gorenstein injective left $R$-modules. By \cite[Theorem 3.8]{ee14}, if $_{A}M$ has finite projective dimension, $M_{B}$  has finite flat
 dimension and $B$ is left Gorenstein regular, then $\mathcal{GI}_{T}=\Psi(\mathcal{GI})$.
  Recall that a triplet $\mathcal{(F,H,L)}$ is called a small hereditary \emph{cotorsion triple} \cite{ch10} if $\mathcal{(F,H)}$ and  $\mathcal{(H,L)}$ are small hereditary cotorsion pairs. We have the following equivalence.
\begin{proposition}Let $T=\left(\begin{matrix}  A & M \\  0 & B \\\end{matrix}\right)$ be an upper triangular matrix ring. If  $T$ is a Gorenstein ring with $_{A}M$ having finite projective dimension, then there is an equivalence of triangulated categories $$\Phi(\mathcal{GP})/\sim\overset{\sim}{\rightarrow} \Psi(\mathcal{GI})/\sim.$$
\end{proposition}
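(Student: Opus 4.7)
The plan is to realize both $\Phi(\mathcal{GP})/\!\sim$ and $\Psi(\mathcal{GI})/\!\sim$ as the homotopy category of a single pair of hereditary abelian model structures on $T$-Mod that share a common class of trivial objects, and then appeal to Lemma \ref{lem4.3}. Since $T$ is Gorenstein with $_{A}M$ of finite projective dimension, the standard analysis of Gorenstein triangular matrix rings (cf.\ \cite[Theorem 3.1]{ee14} as in the proof of Corollary \ref{cor5.1}) ensures that $A$ and $B$ are Gorenstein and that $M_{B}$ has finite flat dimension. The characterizations (4) and (5) recalled in the introduction then give
\[ \mathcal{GP}_{T} = \Phi(\mathcal{GP}) \quad \text{and} \quad \mathcal{GI}_{T} = \Psi(\mathcal{GI}), \]
and under the standing convention the symbol $/\!\sim$ denotes the stable category of the underlying Frobenius category modulo its projective-injective objects, which are $\mathcal{P}_{T}$ and $\mathcal{I}_{T}$ respectively. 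The goal therefore reduces to producing a triangle equivalence $\underline{\mathcal{GP}_{T}} \overset{\sim}{\rightarrow} \overline{\mathcal{GI}_{T}}$.

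Let $\mathcal{W}_{T}$ denote the class of $T$-modules of finite projective, equivalently finite injective, dimension; since $T$ is Gorenstein, $\mathcal{W}_{T}$ is a thick subcategory of $T$-Mod. Classical results for Gorenstein rings yield two complete hereditary cotorsion pairs $(\mathcal{GP}_{T},\mathcal{W}_{T})$ and $(\mathcal{W}_{T},\mathcal{GI}_{T})$, each generated by a set (via \cite[Theorem 4.2]{wa16} and its dual, combined with the core computations $\mathcal{GP}_{T}\cap \mathcal{W}_{T}=\mathcal{P}_{T}$ from \cite{gj16} and $\mathcal{W}_{T}\cap \mathcal{GI}_{T}=\mathcal{I}_{T}$ dually). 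Hovey's correspondence then delivers two hereditary Hovey triples that share the same trivial class $\mathcal{W}_{T}$:
\[ \mathcal{M}_{GP} = (\mathcal{GP}_{T},\,\mathcal{W}_{T},\,T\text{-}\mathrm{Mod}) \quad \text{and} \quad \mathcal{M}_{GI} = (T\text{-}\mathrm{Mod},\,\mathcal{W}_{T},\,\mathcal{GI}_{T}). \]

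By Lemma \ref{lem4.3}, the class of weak equivalences of either Hovey triple is determined by $\mathcal{W}_{T}$ alone (a morphism is a weak equivalence iff it factors as a monomorphism with cokernel in $\mathcal{W}_{T}$ followed by an epimorphism with kernel in $\mathcal{W}_{T}$), so $\mathcal{M}_{GP}$ and $\mathcal{M}_{GI}$ have identical weak equivalences and hence identical homotopy categories as triangulated localizations of $T$-Mod. Combining this coincidence with the standard identifications $\mathrm{Ho}(\mathcal{M}_{GP})\simeq \underline{\mathcal{GP}_{T}}$ and $\mathrm{Ho}(\mathcal{M}_{GI})\simeq \overline{\mathcal{GI}_{T}}$ recalled in the introduction then delivers the required triangle equivalence. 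The main technical step is the verification of the two Gorenstein cotorsion pairs and the identification of their cores with $\mathcal{P}_{T}$ and $\mathcal{I}_{T}$; this is precisely where the Gorenstein hypothesis on $T$ is used, and once it is in place the argument is a direct application of the model-theoretic toolkit developed in Section 4.
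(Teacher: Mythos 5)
Your proof is correct, and it takes a route that is close in spirit but not identical to the paper's.

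The paper constructs the two Hovey triples $\mathcal{M}_{1}=(\Phi(\mathcal{GP}),\mathrm{Rep}(\mathcal{W}),T\text{-Mod})$ and $\mathcal{M}_{2}=(T\text{-Mod},\mathrm{Rep}(\mathcal{W}),\Psi(\mathcal{GI}))$ by lifting cotorsion triples from $A$ and $B$ via Corollary \ref{cor3.9}, which requires verifying the vanishing hypotheses $\mathrm{Tor}_{1}^{B}(M,E)=0$ for $E\in\mathcal{GP}_{B}$ and $\mathrm{Ext}_{A}^{1}(M,F)=0$ for $F\in\mathcal{GI}_{A}$, and then invokes a Quillen equivalence (the identity adjunction, via a cited lemma) to conclude. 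You instead build the two Gorenstein model structures $(\mathcal{GP}_{T},\mathcal{W}_{T},T\text{-Mod})$ and $(T\text{-Mod},\mathcal{W}_{T},\mathcal{GI}_{T})$ directly on $T$-Mod from the Gorensteinness of $T$, observe via Lemma \ref{lem4.3} that they share the same class of weak equivalences, and only afterwards invoke the Enochs--Izurdiaga--Torrecillas characterizations $\mathcal{GP}_{T}=\Phi(\mathcal{GP})$ and $\mathcal{GI}_{T}=\Psi(\mathcal{GI})$ to translate the equivalence $\underline{\mathcal{GP}_{T}}\simeq\overline{\mathcal{GI}_{T}}$ into the asserted form. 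Your version is more economical in that it bypasses the lifting machinery of Section 3 and its hypotheses, while the paper's version illustrates that machinery (which is after all the point of the section). One small matter worth flagging in your argument: asserting that two Hovey triples with a common trivial class yield the same homotopy category \emph{as a category} is immediate since the localizing class is the same, but identifying the triangulated structures as well (suspension and distinguished triangles for $\mathcal{GP}_{T}/\mathcal{P}_{T}$ versus $\mathcal{GI}_{T}/\mathcal{I}_{T}$) is the step that actually needs some care; the Quillen-equivalence framing in the paper packages that compatibility cleanly. You should either cite a result to that effect or spell out why the two triangulations coincide.
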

\begin{proof} By \cite[Lemma 3.1]{wr16}, if $T$ is a Gorenstein ring with $_{A}M$ having finite projective dimension, then $A$ and $B$ are Gorenstein rings. Moreover, $M_{B}$  has finite flat
 dimension. Let  $\mathcal{W}$ be the category of modules with finite projective (injective) dimension.  In this case, we can apply \cite[Theorems 8.3 and 8.4]{ho02} and obtain small hereditary cotorsion triples $(\mathcal{GP}_{A},\mathcal{W}_{A},\mathcal{GI}_{A})$ and $(\mathcal{GP}_{B},\mathcal{W}_{B},\mathcal{GI}_{B})$ in $A$-Mod and $B$-Mod respectively. By \cite[Theorem 2.22]{ho04}, $\mathrm{Ext}_{A}^{1}(M,F)=0$ for any $F\in \mathcal{GI}_{A}$. Therefore, $\mathcal{M}_{1}=(\Phi(\mathcal{GP}),\mathrm{Rep}(\mathcal{W}),~T\text{-}\mathrm{Mod})$ and  $\mathcal{M}_{2}=(T\text{-}\mathrm{Mod},~\mathrm{Rep}(\mathcal{W}),\Psi(\mathcal{GI}))$ are hereditary Hovey triples by Corollary \ref{cor3.9}. Moreover, $(\mathrm{id,id}):\mathcal{M}_{1}\rightleftarrows\mathcal{M}_{2} $ is  a Quillen equivalence by \cite[Lemma 3.2]{re18}. Hence, according to \cite[Proposition 16.2.3]{ma12}, we have an adjoint triangulated equivalence $(\mathrm{L~id},\mathrm{R~id}): \mathrm{Ho}(\mathcal{M}_{1})\rightleftarrows\mathrm{Ho}(\mathcal{M}_{2})$, or equivalently, $\Phi(\mathcal{GP})/\sim\overset{\sim}{\rightarrow} \Psi(\mathcal{GI})/\sim$.
\end{proof}

\bigskip
Recall that a left $R$-module $M$ is called\emph{ Gorenstein flat} \cite{ee00} if $M=\mathrm{ker}d_{\mathbb{F}}^{0}$
for some exact complex of flat left $R$-modules
$$\mathbb{F}:\cdots\rightarrow
F^{-1}\stackrel{d_{\mathbb{F}}^{-1}}\rightarrow F^{0} \stackrel{d_{\mathbb{F}}^{0}}\rightarrow F^{1}\stackrel{}\rightarrow \cdots$$
 which remains exact after applying $I\otimes_{R}-$ for any injective right $R$-module $I$. Denote by $\mathcal{GF}_{R}$ the class of Gorenstein flat left $R$-modules.
Next, we give the characterization of Gorenstein flat left $T$-modules.

\begin{lemma}\label{lem5.2} Suppose each injective left $T$-module has finite projective dimension, $_{A}M$ has finite projective dimension, $M_{B}$  has finite flat dimension. Let $X=\binom{X_{1}}{X_{2}}_{\phi^{X}}$ be a left $T$-module. Then
$X\in \mathcal{GF}_{T}~\text{if and only if}~X\in\Phi(\mathcal{GF}).$
\end{lemma}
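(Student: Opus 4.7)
I plan to prove both inclusions $\mathcal{GF}_T\subseteq\Phi(\mathcal{GF})$ and $\Phi(\mathcal{GF})\subseteq\mathcal{GF}_T$, in the spirit of the Gorenstein projective case \cite[Theorem 3.5]{ee14}. The two key structural inputs are the characterization of flat left $T$-modules from item (3) of the introduction, and the vanishing $\mathrm{Tor}_1^B(M,E)=0$ for every $E\in\mathcal{GF}_B$, which follows by dimension-shifting along a complete flat resolution of $E$ and using $\mathrm{Tor}_i^B(M,-)=0$ for $i>\mathrm{fd}(M_B)$. I will also use that $\mathcal{GF}_T$ is closed under extensions, which holds since the hypothesis that each injective left $T$-module has finite projective dimension makes $(\mathcal{GF}_T,\mathcal{GF}_T^\perp)$ a hereditary cotorsion pair by standard results on the Gorenstein flat cotorsion pair.

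\textbf{Direction ($\Rightarrow$).} Given $X\in\mathcal{GF}_T$, fix a complete flat resolution $\mathbb{F}=(F^i)_{i\in\mathbb{Z}}$ with $X=\ker d^0$. Writing $F^i=\binom{F_1^i}{F_2^i}_{\phi^i}$, the flat characterization and the vanishing $\mathrm{Tor}_1^B(M,F_2^i)=0$ give a short exact sequence of complexes
\[
0\to M\otimes_B\mathbb{F}_2\to\mathbb{F}_1\to\mathbb{F}_{\mathrm{coker}}\to0,
\]
and a snake-lemma chase forces $\phi^X$ to be monic while identifying $X_2$ as the zeroth kernel of the acyclic complex $\mathbb{F}_2$ of flat $B$-modules, and $\mathrm{coker}\,\phi^X$ as the zeroth kernel of the acyclic complex $\mathbb{F}_{\mathrm{coker}}$ of flat $A$-modules. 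To upgrade to complete flat resolutions I test against injective right modules: for $J$ injective right $B$-module, the right $T$-module $(0,J)$ is injective and $(0,J)\otimes_T F\cong J\otimes_B F_2$ for any $F=\binom{F_1}{F_2}_\phi$, so $J\otimes_B\mathbb{F}_2$ is acyclic and $X_2\in\mathcal{GF}_B$; for $K$ injective right $A$-module, the coinduced right $T$-module $\mathrm{Hom}_A({}_AT_T,K)$ is injective, and applying $K\otimes_A-$ to the displayed short exact sequence of complexes reduces acyclicity of $K\otimes_A\mathbb{F}_{\mathrm{coker}}$ to acyclicities inherited from $\mathbb{F}$, yielding $\mathrm{coker}\,\phi^X\in\mathcal{GF}_A$.

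\textbf{Direction ($\Leftarrow$).} Given $X\in\Phi(\mathcal{GF})$, I exploit the short exact sequence
\[
0\to j_!(X_2)\to X\to i_*(\mathrm{coker}\,\phi^X)\to0,
\]
so by extension-closure of $\mathcal{GF}_T$ it suffices to show both outer terms are Gorenstein flat over $T$. The functor $i_*$ is exact and sends flat $A$-modules to flat $T$-modules (via the flat characterization applied to $\binom{P}{0}$), so $i_*$ of a complete flat resolution of $\mathrm{coker}\,\phi^X$ is an acyclic complex of flat $T$-modules, and the injective-test condition reduces symmetrically to the $A$-side. For $j_!$, a complete flat resolution $\mathbb{G}$ of $X_2$ over $B$ gives $j_!\mathbb{G}=\binom{M\otimes_B\mathbb{G}}{\mathbb{G}}_{\mathrm{id}}$, each term of which is flat over $T$ because $\mathrm{coker}\,\mathrm{id}=0$; exactness of $j_!\mathbb{G}$ follows from exactness of $\mathbb{G}$ together with $\mathrm{Tor}_1^B(M,-)=0$ on the Gorenstein flat cycles of $\mathbb{G}$.

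\textbf{Main obstacle.} The subtlest step is verifying that $j_!\mathbb{G}$ remains acyclic after tensoring with injective right $T$-modules of the form $\mathrm{Hom}_A({}_AT_T,K)$: unlike the $(0,J)$-test, which simplifies cleanly to a computation over $B$, this test involves both components of $j_!(G^i)$ and draws in the bimodule structure of $M$. I would handle this by replacing ${}_AT_T={}_AA_T\oplus{}_AM_T$ by a finite projective resolution of its $A$-module piece (using $\mathrm{pd}({}_AM)<\infty$) to reduce to the case where ${}_AM$ is projective, in which case $\mathrm{Hom}_A(T,-)$ is exact and preserves injectives, and the test reduces via the same $0\to M\otimes_B-\to(\text{top})\to(\text{coker})\to0$ decomposition to acyclicities already secured in the forward direction together with finiteness of $\mathrm{fd}(M_B)$. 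Combining the $i_*$- and $j_!$-cases via extension-closure of $\mathcal{GF}_T$ completes the proof.
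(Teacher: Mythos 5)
Your overall strategy matches the paper's: for the converse direction you use the short exact sequence
$$0\longrightarrow j_!(X_2)\longrightarrow X\longrightarrow i_*(\mathrm{coker}\,\phi^X)\longrightarrow 0$$
together with closure of $\mathcal{GF}_T$ under extensions (from \cite{js18}), and you try to show that the two outer terms are Gorenstein flat. The paper's backward direction is literally this. The forward direction you sketch is what the paper cites to \cite[Proposition 3.5]{zh16}. So the skeleton agrees. The problems are all in the places where the actual work happens.

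\textbf{A concrete error in the forward direction.} You assert that for $J$ an injective right $B$-module, the right $T$-module $(0,J)$ is injective. This is false in general. The dual of the characterization (2) in the introduction says a right $T$-module $(W,Z)$ is injective iff $Z$ is injective over $B$, $\ker\tilde\psi$ is injective over $A$, and $\tilde\psi\colon W\to\mathrm{Hom}_B(M,Z)$ is \emph{epic}. For $(0,J)$ the map $\tilde\psi\colon 0\to\mathrm{Hom}_B(M,J)$ is epic only when $\mathrm{Hom}_B(M,J)=0$. (For $T=\left(\begin{smallmatrix}k&k\\0&k\end{smallmatrix}\right)$, the right module $(0,k)$ is not injective.) The injective coming from $J$ is $(\mathrm{Hom}_B(M,J),J)$ with the evaluation map, and tensoring with it does not reduce to $J\otimes_B F_2$, so this step has to be redone.

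\textbf{The ``Main obstacle'' step does not close the gap.} Two issues. First, even granting your two families of test modules were injective, verifying $E\otimes_T j_!\mathbb{G}$ acyclic for members of a cogenerating family of injectives does not give acyclicity for every injective right $T$-module: every injective is a summand of a product of copies of a cogenerator, and $\mathrm{Tor}^T_1(-,Z)$ does not commute with products in the first variable, so the acyclicity does not pass to products. Second, the proposed reduction via a finite projective resolution of ${}_AM$ is not an argument one can actually carry out: replacing the bimodule $e_1T$ by a complex quasi-isomorphic to it does not let you transfer an ``acyclic after tensoring'' statement from the projective case to the general case without a spectral-sequence or hypertor argument that you do not supply, and it is not clear how it would come out. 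Finally, and tellingly, the crucial hypothesis that \emph{every injective left $T$-module has finite projective dimension} does not genuinely enter your argument anywhere (closure of $\mathcal{GF}_T$ under extensions holds over any ring by \cite{js18}, so it is not needed there), whereas the paper uses precisely this hypothesis to invoke \cite[Lemma 2.3]{ee14} and conclude in one stroke that $E\otimes_T j_!F$ is exact for every injective right $T$-module $E$. The paper thus handles the step you single out as the main obstacle with a single general lemma applied twice (once over $B$ with $\mathrm{fd}(M_B)<\infty$ to get exactness of $M\otimes_B F$, once over $T$ with the hypothesis on injective $T$-modules to get the Gorenstein flat test), while your replacement of that step is both logically incomplete and independent of the hypothesis the statement actually requires.
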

\begin{proof}
Since each injective left $T$-module has finite projective dimension, any injective left $A$-module and injective left $B$-module have finite projective dimension by \cite[Theorem 3.1]{ee14}. Hence, if $X\in \mathcal{GF}_{T}$, by the argument similar to that in \cite[Proposition 3.5]{zh16}, we get $X\in\Phi(\mathcal{GF})$.

Conversely, if $X\in\Phi(\mathcal{GF})$, then there exists a short exact sequence of left $T$-modules
$$0\rightarrow\left(\begin{matrix}  M\otimes_{B}X_{2}  \\ X_{2}  \\\end{matrix}\right)_{id}\rightarrow\left(\begin{matrix}  X_{1}  \\ X_{2} \\\end{matrix}\right)_{\phi^{X}}\rightarrow\left(\begin{matrix}  \mathrm{coker}\phi^{X}  \\ 0 \\\end{matrix}\right)_{0}\rightarrow0.
$$
By \cite{js18}, the class $\mathcal{GF}$ is always closed under extensions, regardless of the ring. So we only have to verify that $\left(\begin{smallmatrix}  M\otimes_{B}X_{2}  \\ X_{2}  \\\end{smallmatrix}\right)$ and $\left(\begin{smallmatrix}  \mathrm{coker}\phi^{X}  \\ 0 \\\end{smallmatrix}\right)$ are Gorenstein flat.
Since $X_{2}$ is Gorenstein flat, there is an exact complex $F$  consisting of flat left $B$-modules, which remains exact after applying $I\otimes_{R}-$ for any injective right $B$-module $I$ and such that $\mathrm{ker}d_{F}^{0}=X_{2}$. Using \cite[Lemma 2.3]{ee14}, we get that the complex $M\otimes_{B}F$ is exact in $B$-Mod, which implies that $j_{!}F$ is exact in $T$-Mod. Since each injective left $T$-module has finite projective dimension, by \cite[Lemma 2.3]{ee14} again, $E\otimes_{T}j_{!}F$ is exact for any injective right $T$-module $E$. Therefore,  $\mathrm{ker}d_{j_{!}F}^{0}=j_{!}X_{2}=\left(\begin{smallmatrix}  M\otimes_{B}X_{2}  \\ X_{2}  \\\end{smallmatrix}\right)$ is Gorenstein flat. Similarly, it is easy to verify that $i_{\ast}(\mathrm{coker}\phi^{X})=\left(\begin{smallmatrix}  \mathrm{coker}\phi^{X}  \\ 0 \\\end{smallmatrix}\right)$ is Gorenstein flat.
\end{proof}

\begin{lemma}\label{lem5.3} Let $M$ be a right $R$-module with finite flat dimension and $G$ be a Gorenstein flat left $R$-module. Then $\mathrm{Tor}_{i}^{R}(M,G)=0$ for all $i>0$.
\end{lemma}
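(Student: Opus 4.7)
The plan is to use dimension shifting along the exact complex of flat modules witnessing that $G$ is Gorenstein flat. Since $G$ is Gorenstein flat, fix an exact complex
$$\mathbb{F}: \cdots \to F^{-1} \to F^{0} \to F^{1} \to \cdots$$
of flat left $R$-modules with $G = \ker d_{\mathbb{F}}^{0}$. Set $G^{j} := \ker d_{\mathbb{F}}^{j}$ for $j \geq 0$, so $G^{0}=G$, and for every $j \geq 0$ the exactness of $\mathbb{F}$ yields a short exact sequence $0 \to G^{j} \to F^{j} \to G^{j+1} \to 0$ with $F^{j}$ flat.

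Let $n = \mathrm{fd}\, M < \infty$. The key step is the following dimension shift: applying $\mathrm{Tor}_{\ast}^{R}(M,-)$ to the short exact sequence $0 \to G^{j} \to F^{j} \to G^{j+1} \to 0$ and using $\mathrm{Tor}_{i}^{R}(M, F^{j}) = 0$ for all $i \geq 1$ (since $F^{j}$ is flat), the long exact sequence of Tor gives a natural isomorphism
$$\mathrm{Tor}_{i}^{R}(M, G^{j}) \;\cong\; \mathrm{Tor}_{i+1}^{R}(M, G^{j+1}) \qquad (i \geq 1).$$
Iterating this $k$ times starting at $j=0$ produces $\mathrm{Tor}_{i}^{R}(M, G) \cong \mathrm{Tor}_{i+k}^{R}(M, G^{k})$ for every $k \geq 0$ and every $i \geq 1$.

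To conclude, I choose $k$ large enough that $i + k > n$. Since $\mathrm{fd}\, M = n$, we have $\mathrm{Tor}_{i+k}^{R}(M, G^{k}) = 0$ for any module, in particular for $G^{k}$. Hence $\mathrm{Tor}_{i}^{R}(M, G) = 0$ for all $i \geq 1$, as required. There is no real obstacle here: the only ingredients are the definition of Gorenstein flatness (which supplies the exact flat complex and hence the syzygies $G^{j}$) together with the standard vanishing $\mathrm{Tor}_{>n}^{R}(M,-)=0$ coming from the flat dimension of $M$; the defining property that $I \otimes_{R} \mathbb{F}$ is exact for injective $I$ is not even needed, since $M$ is assumed only to be of finite flat dimension, not injective.
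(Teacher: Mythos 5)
Your proof is correct, and it is cleaner than the paper's. Both arguments hinge on the same two facts: a Gorenstein flat module $G$ has a chain of cosyzygies $G = G^0, G^1, G^2, \dots$ fitting into short exact sequences $0 \to G^j \to F^j \to G^{j+1} \to 0$ with $F^j$ flat, and the finite flat dimension of $M$ forces $\mathrm{Tor}^R_{>n}(M,-)$ to vanish. The paper, however, organizes these ingredients into a two-stage reduction: it first proves the case $\mathrm{fd}(M)\le 1$ by a direct diagram chase (tensoring a length-one flat resolution of $M$ against the embedding $G\hookrightarrow L^0$ and reading off that the connecting map is injective), and then for $\mathrm{fd}(M)=n>1$ it shifts \emph{both} the Tor index up along the cosyzygies $K_i$ of $G$ \emph{and} down along the syzygies of $M$, landing at $\mathrm{Tor}_1^R(C,K_i)$ where $C$ is a syzygy of $M$ of flat dimension $\le 1$, so that the base case applies. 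Your argument dispenses with the base case and with the syzygies of $M$ altogether: a single iterated shift along the cosyzygies of $G$ pushes the Tor degree above $n = \mathrm{fd}(M)$, where everything dies. This is a genuine simplification; nothing is lost, since the base case in the paper is exactly the special instance $k=1$ of your shift. Your closing remark is also accurate and worth noting: only the ``right tail'' of the totally acyclic flat complex (i.e. the exact coresolution $0\to G\to F^0\to F^1\to\cdots$ by flats) is used in either proof, not the condition that $I\otimes_R\mathbb{F}$ stays exact for injective $I$; the lemma therefore holds for any $G$ admitting an exact flat coresolution, of which Gorenstein flat modules are a special case.
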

\begin{proof} The idea of the proof given here is
essentially taken from \cite[Lemma 4.1]{xi08}.
Denote by $\mathrm{fd}(M)$ the flat dimension of $M$. Suppose $M$ is a right $R$-module with fd$(M)\leq1$. Then we have a flat resolution of $M$:
$0\rightarrow F_{1} \rightarrow F_{0}\rightarrow M\rightarrow 0$
with $F_{0}$ and $F_{1}$ flat. From this exact sequence, we obtain the following exact sequence:
$$0\rightarrow \mathrm{Tor}^{R}_{1}(M,G)\rightarrow F_{1}\otimes_{R}G\stackrel{\alpha}\rightarrow F_{0}\otimes_{R}G\rightarrow M\otimes_{R}G\rightarrow0.$$
Since $G$ is Gorenstein flat, we have an exact sequence $0\rightarrow G\rightarrow L^{0}$ with $L^{0}$ flat. To see $\mathrm{Tor}^{R}_{1}(M,G)=0$, we shall show that $\alpha$ is a monomorphism. This follows from the following exact commutative diagram:
$$\xymatrix{
      & 0 \ar[d]_{} & 0 \ar[d]_{}  &  &  \\
   & F_{1}\otimes_{R}G \ar[d]_{} \ar[r]^{\alpha} & F_{0}\otimes_{R}G \ar[d]_{} \ar[r]^{} & M\otimes_{R}G \ar[d]_{} \ar[r]^{} & 0  \\
  0 \ar[r]^{} & F_{1}\otimes_{R}L^{0} \ar[r]^{} & F_{0}\otimes_{R}L^{0} \ar[r]^{} & M\otimes_{R}L^{0} \ar[r]^{} & 0.   }$$
Hence $\mathrm{Tor}^{R}_{i}(M,G)=0$ for all $i > 0$ and every Gorenstein flat left $R$-module $G$ if fd$(M)\leq1$.
Now suppose fd$(M)=n>1$. Then we have a flat resolution of $M$:
$$0 \longrightarrow F_{n} \stackrel{d_{n}}\longrightarrow F_{n-1}\stackrel{d_{n-1}}\longrightarrow\cdots\longrightarrow F_{1} \stackrel{d_{1}} \longrightarrow F_{0}\longrightarrow M\longrightarrow 0.$$
Let $C$ be the image of $d_{n-1}$. Then fd$(C)\leq1$. Since $G$ is Gorenstein flat, there exists an exact sequence
$$0 \longrightarrow G \stackrel{}\longrightarrow L^{0}\stackrel{\delta^{0}}\longrightarrow L^{1}\stackrel{\delta^{1}}\longrightarrow L^{2}\stackrel{}\longrightarrow\cdots$$
with $L^{i}$ flat for all $i\geq0$. Let $K_{i}$ be the image of $\delta^{i}$ and $K_{-1}=G$. Note that all $K_{i}$ are Gorenstein flat.
Given a positive integer $j$. If $j\geq n+1$, then $\mathrm{Tor}^{R}_{j}(M,G)=0$. Suppose $j\leq n$. Let $i = n-j-1$. Then
$$\mathrm{Tor}^{R}_{j}(M,G)\cong\mathrm{Tor}^{R}_{j+i+1}(M,K_{i})\cong\mathrm{Tor}^{R}_{1}(C,K_{i})=0.$$
Thus $\mathrm{Tor}^{R}_{j}(M,G)=0$ for all $j >0$, as desired.
\end{proof}

It is well known that $(\mathcal{F}_{R},\mathcal{C}_{R})$ is a complete hereditary cotorsion pair for any ring $R$, where $\mathcal{F}_{R}$ is the class of flat left $R$-modules, $\mathcal{C}_{R}=\mathcal{F}_{R}^{\perp}$ is the class of cotorsion left $R$-modules. On the other hand, \v{S}aroch and \v{S}\v{t}ov\'{\i}\v{c}ek \cite{js18} have recently proved that the pair $(\mathcal{GF}_{R},\mathcal{GF}_{R}^{\perp})$ is a perfect (so, in particular, complete) and hereditary cotorsion pair for any ring. According to \cite[Proposition 4.1]{es17}, we see that $\mathcal{GF}_{R}\cap\mathcal{GF}_{R}^{\perp}=\mathcal{F}_{R}\cap\mathcal{C}_{R}$. Thus, the cotorsion pairs $(\mathcal{GF}_{R},\mathcal{GF}_{R}^{\perp})$ and $(\mathcal{F}_{R},\mathcal{C}_{R})$ satisfy the conditions (1) and (2) in Lemma \ref{lem4.2}. Then there exists a category $\mathcal{W}'_{R}$ of modules, such that $(\mathcal{GF}_{R},\mathcal{W}'_{R}, \mathcal{C}_{R})$ forms a hereditary abelian model structure.
As a consequence of the above lemmas, we obtain the following recollements.

\begin{theorem}\label{cor5.3} Let $T=\left(\begin{matrix}  A & M \\  0 & B \\\end{matrix}\right)$ be an upper triangular matrix ring. Suppose $_{A}M$ has finite projective dimension, $M_{B}$  has finite flat dimension, and each injective left $T$-module has finite projective dimension.  Then we have a recollement
$$\xymatrix{\underline{\mathcal{GF}_{A}\cap \mathcal{C}_{A}}\ar[r]^{}&\ar@<-2ex>[l]!R|{}\ar@<2ex>[l]!R|{}\underline{\mathcal{GF}_{T}\cap \mathcal{C}_{T}}
\ar[r]!L|{}&\ar@<-2ex>[l]!L|{}\ar@<2ex>[l]!L|{}\underline{\mathcal{GF}_{B}\cap \mathcal{C}_{B}}}.$$

 Moreover, if $T$ is Gorenstein, we have the recollement
  $$\xymatrix{D^{b}(A)/K^{b}(\mathcal{F}_{A}\cap \mathcal{C}_{A})\ar[r]^{}&\ar@<-2ex>[l]!R|{}\ar@<2ex>[l]!R|{}D^{b}(T)/K^{b}(\mathcal{F}_{T}\cap \mathcal{C}_{T})
\ar[r]!L|{}&\ar@<-2ex>[l]!L|{}\ar@<2ex>[l]!L|{}D^{b}(B)/K^{b}(\mathcal{F}_{B}\cap \mathcal{C}_{B}).}$$
\end{theorem}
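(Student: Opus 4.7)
The plan is to apply Theorem \ref{the4.6} to the \v{S}aroch--\v{S}\v{t}ov\'{\i}\v{c}ek Gorenstein flat--cotorsion pair on each of $A$ and $B$, and then rewrite the resulting recollement of homotopy categories in terms of stable Frobenius categories. On any ring $R$ the hereditary cotorsion pairs $(\mathcal{GF}_R,\mathcal{GF}_R^\perp)$ and $(\mathcal{F}_R,\mathcal{C}_R)$ are both generated by a set, satisfy $\mathcal{F}_R\subseteq\mathcal{GF}_R$, and share the common core $\mathcal{GF}_R\cap\mathcal{GF}_R^\perp=\mathcal{F}_R\cap\mathcal{C}_R$ by \cite[Proposition 4.1]{es17}. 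Lemma \ref{lem4.2} then yields cofibrantly generated abelian model structures $\mathcal{M}_A=(\mathcal{GF}_A,\mathcal{W}'_A,\mathcal{C}_A)$ and $\mathcal{M}_B=(\mathcal{GF}_B,\mathcal{W}'_B,\mathcal{C}_B)$.

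Next I would verify the two hypotheses of Theorem \ref{the4.6}. The Tor vanishing $\mathrm{Tor}_1^B(M,X)=0$ for $X\in\mathcal{GF}_B$ is exactly Lemma \ref{lem5.3}, since $M_B$ has finite flat dimension. For the perfectness of $M$, I would identify the two induced cotorsion pairs on $T$-$\mathrm{Mod}$ as the Gorenstein flat--cotorsion pair $(\mathcal{GF}_T,\mathcal{GF}_T^\perp)$ (via Theorem \ref{the3.8} together with Lemma \ref{lem5.2}, using that $M$ is Gorenstein flat on either side since it has finite projective or flat dimension) and the flat--cotorsion pair $(\mathcal{F}_T,\mathcal{C}_T)$ (via a direct Ext-computation with Lemma \ref{lem3.2}(1),(5) applied to $i_*$ and $j_!$ of flat modules, where the Tor hypothesis is vacuous). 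Both cores then collapse to $\mathcal{F}_T\cap\mathcal{C}_T$ via \cite[Proposition 4.1]{es17} applied over $T$, giving perfectness.

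With both hypotheses in place, Theorem \ref{the4.6} produces a cofibrantly generated model structure $\mathcal{M}_T=(\mathcal{GF}_T,\mathcal{W}'_T,\mathcal{C}_T)$ on $T$-$\mathrm{Mod}$ and the desired recollement of homotopy categories. Each homotopy category is identified with $\underline{\mathcal{GF}_R\cap\mathcal{C}_R}$ via Gillespie's equivalence $\mathrm{Ho}(\mathcal{M})\simeq(\mathcal{Q}\cap\mathcal{R})/\omega$ recalled before Lemma \ref{lem4.2}, where the projective--injective objects of the Frobenius category $\mathcal{GF}_R\cap\mathcal{C}_R$ are precisely $\mathcal{F}_R\cap\mathcal{C}_R$. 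This establishes the first recollement.

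For the second recollement, assume $T$ is Gorenstein; then $A$ and $B$ are Gorenstein by \cite[Lemma 3.1]{wr16}. It remains to exhibit, for every Gorenstein ring $R$, a triangle equivalence $\underline{\mathcal{GF}_R\cap\mathcal{C}_R}\simeq D^b(R)/K^b(\mathcal{F}_R\cap\mathcal{C}_R)$ which is natural enough to be compatible with the adjunctions $i_*,i^!,j^*,j_*$ across $A,T,B$. This is the Gorenstein flat--cotorsion analog of the Buchweitz equivalence $\underline{\mathcal{GP}_R}\simeq D_{Sg}(R)$ invoked in the proof of Corollary \ref{cor5.1}, and I would prove it by showing that every bounded complex over a Gorenstein ring admits a Gorenstein flat--cotorsion replacement in $D^b(R)$ that is unique up to bounded complexes of flat--cotorsion modules, then transporting the first recollement along this equivalence. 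The main obstacle is precisely this Buchweitz-type identification: the abstract recollement from Theorem \ref{the4.6} and the perfectness verification are routine bookkeeping with cotorsion pairs, while the derived-categorical description requires careful flat--cotorsion approximation theory over a Gorenstein ring.
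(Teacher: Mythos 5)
Your argument for the first recollement is essentially identical to the paper's: you set up the Gorenstein flat model structures via the \v{S}aroch--\v{S}\v{t}ov\'{\i}\v{c}ek cotorsion pair and \cite[Proposition 4.1]{es17}, check the Tor hypothesis via Lemma \ref{lem5.3}, verify perfectness by identifying the two induced cotorsion pairs on $T$-Mod with $(\mathcal{GF}_T,\mathcal{GF}_T^{\perp})$ and $(\mathcal{F}_T,\mathcal{C}_T)$ (using Lemma \ref{lem5.2} together with the flat and cotorsion characterizations of \cite{ee11}), and then invoke Theorem \ref{the4.6} followed by Gillespie's identification of each homotopy category with the corresponding stable Frobenius category. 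Your perfectness verification is spelled out in more detail than the paper's, which merely asserts it is ``easy to see,'' but it is the same computation, and this part of the proposal is sound.

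The genuine gap is in the second recollement. You correctly isolate that one needs, for every Gorenstein ring $R$, a triangle equivalence $\underline{\mathcal{GF}_R\cap\mathcal{C}_R}\simeq D^b(R)/K^b(\mathcal{F}_R\cap\mathcal{C}_R)$ compatible with the adjunctions across $A$, $T$, $B$, but you only sketch a Buchweitz-type programme (Gorenstein flat--cotorsion replacements of bounded complexes) and flag it as the main obstacle without carrying it out. The paper closes this step by a direct citation to \cite[Corollary 5.2]{di18}, which establishes precisely this equivalence for Gorenstein rings. With that result in hand your proof is complete and matches the paper's; as written, the second recollement is asserted but not proved.
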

\begin{proof}Let $\mathcal{M}_{A}=(\mathcal{GF}_{A},\mathcal{W}'_{A},\mathcal{C}_{A})$ and $\mathcal{M}_{B}=(\mathcal{GF}_{B},\mathcal{W}'_{B},\mathcal{C}_{B})$ be abelian model structures on $A\text{-Mod}$ and $B\text{-Mod}$, respectively. It is easy to see that the bimodule $M$ is perfect relative to $\mathcal{M}_{A}$ and $\mathcal{M}_{B}$.
By Lemma \ref{lem5.2} and \cite[Theorem 2.8]{ee11}, we see that $\Phi(\mathcal{GF})=\mathcal{GF}_{T}$ and $\mathrm{Rep}(\mathcal{C})=\mathcal{C}_{T}$. As a consequence of Lemma \ref{lem5.3} and Theorem \ref{the4.6}, we get the first recollement. If $T$ is a Gorenstein ring, \cite[Lemma 3.1]{wr16} tells us that $A$ and $B$ are also Gorenstein rings. Therefore, the second recollement follows from the triangle equivalence
$$\underline{\mathcal{GF}_{R}\cap \mathcal{C}_{R}}\overset{\sim}{\rightarrow} D^{b}(R)/K^{b}(\mathcal{F}_{R}\cap \mathcal{C}_{R})$$
for any Gorenstein ring $R$ by \cite[Corollary 5.2]{di18}.
\end{proof}

\bigskip \centerline {\bf Acknowledgements}
\bigskip

\hspace{-0.5cm}  This research was partially supported by the National Natural Science Foundation of China (Nos. 11771202, 11361052). Part of this work was carried out while the first author was visiting  Charles University in Prague. He gratefully acknowledges the financial support from China Scholarship Council (CSC No. 201806190107) and the kind hospitality from the host university.

\end{document}